\documentclass[a4paper,fleqn]{cas-sc}
\usepackage[T1]{fontenc}     % 
\usepackage{fix-cm}

\DeclareSymbolFont{CMletters}{OML}{cmm}{m}{it}
\SetSymbolFont{CMletters}{bold}{OML}{cmm}{b}{it}

\DeclareMathSymbol{\ell}{\mathord}{CMletters}{"60}

\usepackage{bm}

\usepackage{graphicx}
\usepackage{enumitem,xcolor}
\usepackage{amsfonts,amssymb}
\usepackage[ruled,vlined,linesnumbered,norelsize]{algorithm2e}
\hypersetup{hidelinks,
breaklinks=true,
colorlinks=true,
linkcolor=blue,
citecolor=blue,
urlcolor=blue,
bookmarksopen=false,
pdftitle={Title},
pdfauthor={Author}}

\usepackage{subcaption}
\usepackage{placeins}

\DeclareMathOperator{\conv}{conv}

\DeclareMathOperator*{\argmax}{argmax}

\newcommand{\Po}{\mathcal{P}}
\newcommand{\D}{\mathcal{D}}

\newtheorem{theorem}{Theorem}%[section]
\newtheorem{remark}[theorem]{Remark}

\newtheorem{proposition}[theorem]{Proposition}
\newtheorem{corollary}[theorem]{Corollary}
\newtheorem{example}[theorem]{Example}
\newtheorem{observation}[theorem]{Observation}
\newproof{proof}{Proof}
 
\makeatletter
\renewcommand*{\top}{%
  {\mathpalette\@transpose{}}%
}
\newcommand*{\@transpose}[2]{%
  \raisebox{\depth}{\scriptsize $\m@th#1\mathsf{T}$}%
}
\newcommand{\B}[2][3.39em]{\makebox[#1][c]{\ensuremath{#2}}}
\makeatother

\allowdisplaybreaks[1]

\begin{document}

\let\WriteBookmarks\relax
\def\floatpagepagefraction{1}
\def\textpagefraction{.001}
\shorttitle{On disjunction convex hulls for generalized cross polytopes}
\shortauthors{Y.~Qu, J.~Lee}

\title[mode = title]
{On disjunction convex hulls for %\texorpdfstring{\\}{} 
generalized cross polytopes} 
% Title footnote mark
% eg: \tnotemark[1]
\tnotemark[1]
% Title footnote 1.
% eg: \tnotetext[1]{Title footnote text}
% \tnotetext[<tnote number>]{<tnote text>} 
\tnotetext[1]{A short preliminary version of this work is to appear 
in the proceedings of ISCO 2026,  Springer 
\emph{Lecture Notes in Computer Science.}}

\author[1]{Yushan Qu}[orcid=0000-0002-2862-4224]
\ead{yushanqu@umich.edu}
\ead[URL]{https://yushanqu.github.io}

\author[1]{Jon Lee}[orcid=0000-0002-8190-1091]
\ead{jonxlee@umich.edu}
\ead[URL]{https://sites.google.com/site/jonleewebpage/}

\affiliation[1]{organization={University of Michigan},city={Ann Arbor},state={Michigan},country={U.S.A.},}

\begin{abstract}
We continue the study of the natural polytope $\D$ in $\mathbb{R}^{n+d}$ associated with the
disjunction of a set of $n+1$ polytopes in $\mathbb{R}^d$,
managed by $n$ binary variables. Already $\D$ had been characterized for 
arbitrary $n\geq 1$ and 
(i) $d\in\{1,2\}$, and (ii) for a broad generalization of hyper-rectangles.
In both cases, the complete characterization
employs full optimal big-M lifting.
Here, we give 
a complete description of $\D$ for the case of $n\!=\!1$ and arbitrary
$d$, when the (two) polytopes are arbitrary generalized cross polytopes. 
Furthermore, we characterize when 
our complete description 
employs only optimal big-M lifting.
For $n>1$, we generalize the family of facet-describing inequalities used for $n=1$. Finally, we 
carry out some computational 
experiments demonstrating the 
value of our theoretical results. 
\end{abstract}

\begin{keywords}
mixed-integer optimization \sep disjunction  \sep
generalized cross polytopes \sep big M \sep lifting \sep  convex hull \sep facet
\end{keywords}

\maketitle   

%%%%%%%%%%%%%%%

\section{Introduction}

For families of $n+1$ input polytopes in $\mathbb{R}^d$, we are interested in giving descriptions, by linear inequalities, of the convex hull $\D$ of
the natural polytope in $\mathbb{R}^{n+d}$, associated to the disjunction of the input polytopes, as managed by $n$ binary indicator variables --- we will make this precise in \S\ref{sec:def}.
In \cite{QL_DAM2025}, we gave a complete description for $n\geq 1$ and  $d\in\{1,2\}$, via
full optimal big-M lifting inequalities (starting from facet-describing inequalities of the input polytopes). Additionally, we demonstrated that even for the case of two simplices in $\mathbb{R}^3$, the polytope $\D$ can have facets that are not described by lifting. The other main result of \cite{QL_DAM2025}
is that for $n+1$ input polytopes having ``the same shape'', $\D$ has a very compact description 
via full optimal big-M lifting inequalities; see \S\ref{sec:prior_characterizations} for a precise statement.
This latter result includes the case in which all of the input polytopes are (axis-aligned)
hyper-rectangles, first established in \cite{QL_ISCO2024}. With this result in mind, we are motivated to consider other natural families of input polytopes, and to this end, we introduce ``generalized cross polytopes''
(defined in \S\ref{sec:newresults_cross}) --- a polar opposite to the case of hyper-rectangles.
While a hyper-rectangle in $\mathbb{R}^d$ has $2^d$ extreme points and $2d$ facets,
a ``generalized cross polytope''  in $\mathbb{R}^d$ has $2d$ extreme points and $2^d$ facets.
We are able to obtain a complete description of $\D$ for the case of $n=1$ (two input polytopes) and general $d\geq 1$; further, in this setting, we provide a characterization of when optimal big-M lifting suffices to describe $\D$.

\medskip
\noindent {\bf Organization and Contributions.}
In \S\ref{sec:def}, 
we give the main definitions and set the general notation for our setting.
In \S\ref{sec:priorres}, we give a very brief survey of previous results to
position our new results in the context of the literature.
In \S\ref{sec:newresults_cross}, we define ``generalized cross polytopes''
and present our main results.
We give 
a complete description of $\D$ for the case of $n\!=\!1$ and arbitrary
$d$, when the (two) polytopes are arbitrary generalized cross polytopes. 
We characterize when 
our complete description 
employs only optimal big-M lifting inequalities. We note that even in the cases where
only optimal big-M lifting inequalities are employed, our complete description result
is not implied by prior results. For $n>1$, we give a broad generalization of the family of facet-describing inequalities used for $n=1$. 
In \S\ref{sec:exper}, we present results of computational experiments demonstrating the 
value of our theoretical results. 
In \S\ref{sec:outlook}, we describe our next steps.
Appendices \ref{app:facet}, \ref{app:lift}, and \ref{app:ex}
contain some deferred details.

\medskip
\noindent
{\bf Standard terminology and notation concerning polytopes.} 
We assume familiarity with some aspects of the theory of polytopes and of integer programming; see \cite{ziegler} and \cite{cornrio,CCZ_Book,NWbook,balas2018disjunctive}, respectively.
We say that $\alpha^\top x \leq \beta$ is 
\emph{valid} for a polytope $\Po\subset \mathbb{R}^r$ if it is satisfied by
all points of $\Po$. 
For a valid inequality $\alpha^\top x \leq \beta$  of $\Po$, the \emph{face described} is $\Po\cap \{ x \in \mathbb{R}^r ~:~ \alpha^\top x = \beta\}$.
The \emph{dimension} of $\Po$ is one less than the maximum number of affinely-independent points in $\Po$, and in particular, it is
\emph{full dimensional} if its dimension is 
the same as the ambient dimension $r$. 
A \emph{facet} of $\Po$ is a face having dimension one less than that of $\Po$. 
A full-dimensional polytope has an essentially
unique minimal description (up to positive scaling) using 
facet-describing inequalities. 

%%%%%%%%%%%%%%%%%%%%

\section{Definitions and an elementary result}\label{sec:def}

Our setup is the same as in \cite{QL_ISCO2024,QL_DAM2025}, and we refer the
reader to \cite{QL_DAM2025} for many links to the literature.
For the positive integer $d$, we let $[d]:=\{1,2,\ldots,d\}$. 
For the positive integer $n$, we let $N_0 := \{0,1,...,n\}$ and  $N:= \{1,...,n\}$. 
 For $d\geq 1$ and $n\geq 1$, we consider $n+1$ full-dimensional polytopes $\Po_i:=\{ x\in\mathbb{R}^d ~:~ \bm{A}^i x \leq \bm{b}^i\}$, for  $i\in N_0$\,. 
The number of columns of each matrix $\bm{A}^i$ is $d$, and the number of rows of each $\bm{A}^i$ agrees with the number of elements of the corresponding vector $\bm{b}^i$\,. We  do not assume that the $\Po_i$ are pairwise disjoint,
but it may be the case.
For convenience, we assume that 
 each inequality defining each $\Po_i$ describes a unique facet of $\Po_i$\,. 

 We employ binary variables $z_i$\,, for 
 $i\in N$.
The interpretation of $z_i=1$ is that $x\in\mathbb{R}^d$ is required to be in $\Po_i$\,, Further, if $z_i=0$ for all $i \in N$, then  $x$ is required to be in $\Po_0$\,. 
We then require that 
$x$ is enforced to be 
in (at least) one of the $\Po_i$ using
$
\textstyle \sum_{i\in N} z_i \leq 1.\label{eq:choice}
$
Because the $\Po_i$ may overlap, it can well be that $z_i=0$, but $x\in \Po_i$ for some $i\in N$. Likewise, we can have $z_i=1$ for 
 some choice of $i\in N$, but $x\in \Po_0$ or other $\Po_j$\,.

Throughout, whenever we write $\genfrac(){0pt}{1}{x}{z}$, 
it can be assumed that 
$x\in\mathbb{R}^d$ and $z\in\mathbb{R}^n$.
For $i\in N$,  let $\mathbf{e}_i$ denote the $i$-th standard unit vector in $\mathbb{R}^n$,
and additionally for convenience, let  $\mathbf{e}_0$
denote the zero vector in $\mathbb{R}^n$.
For $i\in N_0$\,, let $\bar{\Po}_i:=\left\{ \genfrac(){0pt}{1}{\hat x}{\mathbf{e}_i}~:~ \hat x\in \Po_i\right\}$,
which is the polytope $\Po_i\subset \mathbb{R}^d$ lifted
to $\mathbb{R}^{d+n}$ by setting $z=\mathbf{e}_i$\,.
For $i\in N_0$\,, let $\mathcal{X}_i$ be the (finite) set of extreme points of $\Po_i$\,, 
and let $\bar{\mathcal{X}}_i:=\left\{\genfrac(){0pt}{1}{\hat x}{\mathbf{e}_i} ~:~ \hat x\in \mathcal{X}_i\right\}$, the (finite) set of extreme points of 
$\bar{\Po}_i$\,. 
Finally, let
$
\D:= \conv
\left\{
\bar{\Po}_i ~:~ i\in N_0
\right\}
= \conv
\left\{
\bar{\mathcal{X}}_i ~:~ i\in N_0
\right\}.
$
For various families of 
polytopes, we are interested in characterizing $\D$ by linear inequalities.

Before continuing, we state an elementary result
concerning an affine bijection related to translations of the $n+1$ input polytopes. This result allows us to translate the input polytopes to any convenient locations, and then polyhedral
results for the translated polytopes
map back to the original polytopes according to the inverse of the affine bijection. 

\begin{proposition}\label{prop:translations}
    Let $t^i \in \mathbb{R}^d$ be arbitrary,
    and let $\Po_i' := \Po_i + t^i$\, for
    $i\in N_0$\,. Let $\D' := {\rm conv}\left( \bar{\Po}'_0\,, \cdots, \bar{\Po}'_n \right)$. We consider the affine bijection $f: \mathbb{R}^{d+n}\rightarrow \mathbb{R}^{d+n}$ defined by $f\genfrac(){0pt}{1}{x}{z} := \genfrac(){0pt}{1}{x + \sum_{j\in N} z_j t^j + \left(1-\sum_{j\in N} z_j\right)t^0}{ z}$. Then, $\D' = f(\D)$.
\end{proposition}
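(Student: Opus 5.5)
The plan is to use the fact that $f$ is affine, so it commutes with taking convex hulls, and then to check that it maps each lifted polytope $\bar{\Po}_i$ onto $\bar{\Po}'_i$. Both steps are short; the only work is bookkeeping.

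\emph{Step 1: $f$ is an affine bijection.} Write $f\genfrac(){0pt}{1}{x}{z} = \genfrac(){0pt}{1}{x + t^0 + \sum_{j\in N} z_j (t^j - t^0)}{z}$. This is a linear map plus the constant vector $\genfrac(){0pt}{1}{t^0}{0}$. The linear part is block upper-triangular with identity blocks on the diagonal, so it is invertible. The inverse is explicit:
\[
f^{-1}\genfrac(){0pt}{1}{y}{z} = \genfrac(){0pt}{1}{y - \sum_{j\in N} z_j t^j - \left(1-\sum_{j\in N} z_j\right)t^0}{z}.
\]

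\emph{Step 2: $f(\bar{\Po}_i) = \bar{\Po}'_i$ for each $i\in N_0$.} Take a point $\genfrac(){0pt}{1}{\hat x}{\mathbf{e}_i}$ with $\hat x\in\Po_i$.
\begin{itemize}
\item If $i\in N$, then $\sum_{j\in N} z_j = 1$ and $z_j=0$ for $j\ne i$. So $f$ sends the point to $\genfrac(){0pt}{1}{\hat x + t^i}{\mathbf{e}_i}$.
\item If $i=0$, then $z=\mathbf{e}_0=0$. So $f$ sends the point to $\genfrac(){0pt}{1}{\hat x + t^0}{\mathbf{e}_0}$.
\end{itemize}
In both cases $f$ restricted to $\bar{\Po}_i$ is translation of the $x$-part by $t^i$, with $z$ fixed at $\mathbf{e}_i$. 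Since $\Po_i' = \Po_i + t^i$, this gives $f(\bar{\Po}_i) = \bar{\Po}'_i$.

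\emph{Step 3: pass to convex hulls.} Affine maps preserve convex combinations: if $p=\sum_k \lambda_k p_k$ with $\lambda_k\ge 0$ and $\sum_k\lambda_k=1$, then $f(p)=\sum_k\lambda_k f(p_k)$. Hence $f(\conv S)=\conv f(S)$ for any set $S$. Applying this with $S=\bigcup_{i\in N_0}\bar{\Po}_i$ and using Step 2,
\[
f(\D) = \conv\Bigl(\bigcup_{i\in N_0} f(\bar{\Po}_i)\Bigr) = \conv\Bigl(\bigcup_{i\in N_0}\bar{\Po}'_i\Bigr) = \D'.
\]

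None of these steps is a real obstacle. The one point needing care is Step 2: the coefficient $1-\sum_{j\in N} z_j$ on $t^0$ must evaluate to $0$ when $z=\mathbf{e}_i$ with $i\in N$, and to $1$ when $z=\mathbf{e}_0$. This is exactly what makes a single affine map realize a different translation on each lifted polytope.
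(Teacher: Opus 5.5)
Your proof is correct and follows essentially the same route as the paper: check that $f$ shifts the $x$-part by $t^i$ on the $i$-th lifted piece, give the explicit inverse to confirm $f$ is a bijection, and then pass to convex hulls. The only difference is cosmetic. The paper checks the translation only on the extreme points $\bar{\mathcal{X}}_i$ and leaves the convex-hull step as ``easy to conclude,'' whereas you work with the whole polytopes $\bar{\Po}_i$ and spell out why $f(\conv S)=\conv f(S)$.
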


\begin{proof}
For $i\in N_0$ and $\genfrac(){0pt}{1}{\hat x}{\mathbf{e}_i} \in \bar{\mathcal{X}}_i$\,, it is easy to check that 
$f\genfrac(){0pt}{1}{\hat x}{\mathbf{e}_i}
$ is the extreme point 
$\genfrac(){0pt}{1}{\hat x+t^i}{\mathbf{e}_i}$ of $\D'$. Moreover, we can verify that $f$ is invertible:
$f^{-1}
\genfrac(){0pt}{1}{x}{z} =
\genfrac(){0pt}{1}{ x - \sum_{j\in N} z_j t^j - (1-\sum_{j\in N} z_j)t^0}{z} 
$. It is easy to conclude now that $\D' = f(\D)$.
\qed    
\end{proof}

%%%%%%%%%%%%

\section{Prior Results}\label{sec:priorres}

\subsection{General results}

We have the following positive complexity result, when the 
dimension $d$ is considered fixed.

\begin{theorem}[\protect{\cite[Theorem 11]{QL_DAM2025}}]\label{thm:computing}
    For $n+1$ polytopes $\Po_i \subset\mathbb{R}^d$, for $i \in N_0$\,,
    given by inequality descriptions, we can compute 
    a facet-describing inequality system for $\D$ in time polynomial
    in the input size, considering $d$ to be fixed.
\end{theorem}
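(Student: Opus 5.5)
The plan is to build the facet description in two stages: first compute a vertex description of $\D$, then convert it to inequalities with a fixed-dimension convex hull algorithm. Throughout, the ambient dimension of $\D$ is $d+n$, and $n$ is part of the input, so it is \emph{not} fixed. The crux is that $\D$ is not a general polytope in dimension $d+n$. It is the convex hull of the $n+1$ lifted polytopes $\bar{\Po}_i$, each sitting in the affine slice $z=\mathbf{e}_i$. These slices are at affinely independent points of $\mathbb{R}^n$, namely the vertices $\mathbf{e}_0,\mathbf{e}_1,\ldots,\mathbf{e}_n$ of a simplex.

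The first step is to compute, for each $i\in N_0$, the extreme point set $\mathcal{X}_i$ from the inequality description $\bm{A}^i x\le \bm{b}^i$. In fixed dimension $d$, the number of vertices of a polytope with $m_i$ facets is $O(m_i^{\lfloor d/2\rfloor})$ by the Upper Bound Theorem. Vertex enumeration (for example, trying all $d$-subsets of rows) runs in polynomial time when $d$ is fixed. This gives $\bar{\mathcal{X}}_i$, and hence the vertex set of $\D$, of polynomial size.

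The second step is to describe the facets of $\D$ using its structure. I would write a general valid inequality of $\D$ as $\alpha^\top x + \gamma^\top z \le \beta$. Because $z=\mathbf{e}_i$ on $\bar{\Po}_i$, validity is equivalent to
\[
\max_{\hat x\in \Po_i} \alpha^\top \hat x \le \beta - \gamma_i, \qquad i\in N_0,
\]
where $\gamma_0:=0$. Given $\alpha$, the optimal choice is $\gamma_i = \beta - h_i(\alpha)$ with $h_i$ the support function of $\Po_i$, so every such inequality is determined by the direction $\alpha\in\mathbb{R}^d$ alone. A facet of $\D$ that is not the simplex constraint in $z$ (that is, not $z\ge 0$ or $\sum z\le 1$) corresponds to an $\alpha$ whose tight set spans dimension $d+n-1$. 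Since the $z$-parts are affinely independent, this tightness count reduces to the faces $F_i(\alpha)=\arg\max_{\Po_i}\alpha^\top x$ having a combined "direction rank" of $d-1$. Equivalently, $\alpha$ must be a ray of the common refinement of the normal fans of $\Po_0,\ldots,\Po_n$. By duality, these rays are the facet normals of the Minkowski sum $\Po_0+\cdots+\Po_n\subset\mathbb{R}^d$.

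Computing that refinement is the main obstacle. The common refinement of $n+1$ fans in $\mathbb{R}^d$ is formed by intersecting normal cones, and its number of rays is polynomial when $d$ is fixed. The reason is that each ray is determined by $d-1$ linearly independent edge directions drawn from the union of the edge sets of the $\Po_i$, which gives $O(E^{d-1})$ candidates, where $E$ is the total number of edges. For each candidate $\alpha$ (and $-\alpha$), I would compute the $h_i(\alpha)$ by linear programming or by scanning $\mathcal{X}_i$. I would then keep those inequalities whose tight vertex set has affine rank $d+n-1$, checking the rank by linear algebra. Adding the $n+1$ simplex inequalities in $z$ (after checking they are facets) completes a facet-describing system. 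Every step is polynomial in the input size for fixed $d$, which proves Theorem~\ref{thm:computing}.
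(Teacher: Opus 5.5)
The paper does not prove this statement; it is quoted from \cite[Theorem 11]{QL_DAM2025}, so your argument has to stand on its own. It does: your proof is correct.

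Your reduction is the Cayley-trick correspondence. A vertical facet has the form $\alpha^\top x+\sum_{i\in N}(h_0(\alpha)-h_i(\alpha))z_i\le h_0(\alpha)$. The face it describes is $\conv\bigl(\bigcup_{i\in N_0}F_i(\alpha)\times\{\mathbf{e}_i\}\bigr)$, which has dimension $n+\dim(F_0(\alpha)+\cdots+F_n(\alpha))$. So the vertical facets of $\D$ are in bijection with the facets of $\Po_0+\cdots+\Po_n\subset\mathbb{R}^d$, and enumerating the $O(E^{d-1})$ candidate normals $\pm\alpha$ is polynomial for fixed $d$. As a by-product you get a polynomial bound on the number of facets of $\D$. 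The correspondence is also consistent with Theorem~\ref{thm:facet_lift}, since a facet normal of any single $\Po_k$ is a facet normal of the sum.

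Two steps should be tightened.

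\emph{The facet form.} The phrase ``the optimal choice is $\gamma_i=\beta-h_i(\alpha)$'' concerns the strongest inequality for a given $(\alpha,\beta)$; it does not yet say anything about facets. Add the following argument. Suppose a facet-describing inequality with $\alpha\neq 0$ had $\gamma_i<\beta-h_i(\alpha)$ for some $i\in N$ (respectively $\beta>h_0(\alpha)$). Then no point of $\bar{\Po}_i$ (respectively $\bar{\Po}_0$) would be tight. So the facet would lie in the proper face $\D\cap\{z_i=0\}$ (respectively $\D\cap\{\sum_{j\in N}z_j=1\}$), and hence equal it. Since $\D$ is full dimensional (Theorem~\ref{thm:fulldim}), the inequality would then be a positive multiple of $-z_i\le 0$ (respectively $\sum_{j\in N}z_j\le 1$), contradicting $\alpha\neq 0$. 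This is exactly what makes the tight set meet every slice, which your dimension count presupposes.

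\emph{The opening plan.} Drop the idea of running a fixed-dimension convex-hull algorithm on $\D$. As you note yourself, $d+n$ is not fixed; it is the structured argument that follows which proves the theorem.

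Finally, under the paper's standing assumption that all $\Po_i$ are full dimensional, the simplex inequalities $z_j\ge 0$ and $\sum_{j\in N}z_j\le 1$ are facets by Theorems~\ref{thm:nonneg} and~\ref{thm:sumz}, so no separate check is needed.
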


The following results seeks to understand the facet structure of 
$\D$. 

\begin{theorem}[\protect{\cite[Theorem 2]{QL_DAM2025}}]
\label{thm:fulldim}
Suppose that $d\geq 1$. 
If all $\Po_i\subset\mathbb{R}^d$ are nonempty, for $i\in N_0$\,, and at least one is full dimensional, then 
$\D$ is full dimensional.
\end{theorem}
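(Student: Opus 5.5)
To show $\D$ is full dimensional, I will exhibit $d+n+1$ affinely independent points of $\D$ in $\mathbb{R}^{d+n}$.

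\emph{Step 1: a full simplex in the full-dimensional block.} Fix an index $k\in N_0$ for which $\Po_k$ is full dimensional. Then $\Po_k$ contains $d+1$ affinely independent points $v^0,\dots,v^d$. Their lifts $\genfrac(){0pt}{1}{v^\ell}{\mathbf{e}_k}$ lie in $\bar{\Po}_k\subseteq\D$.

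\emph{Step 2: one point from each other block.} For each $i\in N_0\setminus\{k\}$, pick any point $\hat x^i\in\Po_i$; this is possible because all $\Po_i$ are nonempty. The lifted point $\genfrac(){0pt}{1}{\hat x^i}{\mathbf{e}_i}$ lies in $\D$. This gives $n$ further points, so $d+n+1$ points in total.

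\emph{Step 3: affine independence.} Subtract the base point $\genfrac(){0pt}{1}{v^0}{\mathbf{e}_k}$ and show that the resulting $d+n$ difference vectors are linearly independent.

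The differences from Step 1 are $\genfrac(){0pt}{1}{v^\ell-v^0}{0}$ for $\ell\in[d]$. These span $\mathbb{R}^d\times\{0\}$, since the $v^\ell$ are affinely independent.

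The differences from Step 2 are $\genfrac(){0pt}{1}{\hat x^i-v^0}{\mathbf{e}_i-\mathbf{e}_k}$ for $i\neq k$. Modulo $\mathbb{R}^d\times\{0\}$, only their $z$-parts $\mathbf{e}_i-\mathbf{e}_k$ matter. So it suffices to show that these $n$ vectors are linearly independent in $\mathbb{R}^n$.

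\emph{Case $k=0$.} Here $\mathbf{e}_k=0$, so the $z$-parts are just $\mathbf{e}_1,\dots,\mathbf{e}_n$, which are independent.

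\emph{Case $k\in N$.} The $z$-parts are $-\mathbf{e}_k$ (coming from $i=0$) together with $\mathbf{e}_i-\mathbf{e}_k$ for $i\in N\setminus\{k\}$. Adding $-\mathbf{e}_k$ to each of the others recovers every $\mathbf{e}_i$, so these vectors span $\mathbb{R}^n$. Since there are $n$ of them, they form a basis.

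\emph{Conclusion.} The difference vectors are block triangular: the first $d$ span the $x$-directions, and the remaining $n$ are independent in the $z$-coordinates. Hence all $d+n$ are linearly independent, and $\D$ is full dimensional.

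\emph{Main obstacle.} The only delicate point is the case analysis on whether the full-dimensional polytope is $\Po_0$ or some $\Po_k$ with $k\in N$. This is because $\mathbf{e}_0=0$ makes the geometry of the two cases differ. The case $k\in N$ requires the short change-of-basis argument above; everything else is routine.
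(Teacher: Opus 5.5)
Your proof is correct and takes the standard route for this result, which the paper only cites from \cite{QL_DAM2025}: $d+1$ affinely independent points from the full-dimensional block plus one point from each other block, with independence read off from the $z$-coordinates and a case split forced by $\mathbf{e}_0=\mathbf{0}$, just as in the paper's facet argument in Appendix~\ref{app:facet}. One small slip is in the case $k\in N$: you recover $\mathbf{e}_i$ by \emph{subtracting} $-\mathbf{e}_k$ from $\mathbf{e}_i-\mathbf{e}_k$ (adding it gives $\mathbf{e}_i-2\mathbf{e}_k$), but $\mathbf{e}_k$ lies in the span either way, so the conclusion stands.
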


\begin{theorem}[\protect{\cite[Theorem 4]{QL_DAM2025}}]
\label{thm:nonneg}
Suppose that $d\geq 1$. 
If all $\Po_i\subset\mathbb{R}^d$ are nonempty, for $i\in N_0$\,, and $\Po_0$ is full dimensional, then for all $j\in N$, $z_j\geq 0$ is facet describing for $\D$. 
\end{theorem}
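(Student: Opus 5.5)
To prove Theorem~\ref{thm:nonneg}, I will show directly that, for a fixed $j\in N$, the face $F_j:=\D\cap\{z_j=0\}$ contains $d+n$ affinely independent points. By Theorem~\ref{thm:fulldim}, $\D$ is full dimensional in $\mathbb{R}^{d+n}$, so this is exactly what it means for $F_j$ to be a facet.

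\textbf{Validity.} This is immediate. Every point of every $\bar{\mathcal{X}}_i$ has $z=\mathbf{e}_i$, which is a nonnegative vector. Nonnegativity is preserved under convex combinations, so $z_j\ge 0$ holds on all of $\D$.

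\textbf{The affinely independent points.} I will collect points of $\D$ with $z_j=0$ as follows.

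\emph{From $\bar{\Po}_0$.} Since $\Po_0$ is full dimensional, it contains $d+1$ affinely independent points $x^0,\dots,x^d$. Their lifts $\genfrac(){0pt}{1}{x^k}{\mathbf{0}}$ lie in $\bar{\Po}_0\subseteq\D$ and have $z_j=0$.

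\emph{From $\bar{\Po}_i$, $i\in N\setminus\{j\}$.} Each $\Po_i$ is nonempty, so I pick any point $y^i\in\Po_i$. The point $\genfrac(){0pt}{1}{y^i}{\mathbf{e}_i}$ lies in $\D$ and has $z_j=0$ because $i\neq j$.

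This gives $(d+1)+(n-1)=d+n$ points in total.

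\textbf{Affine independence.} I subtract the base point $\genfrac(){0pt}{1}{x^0}{\mathbf{0}}$ from the others and check that the resulting $d+n-1$ difference vectors are linearly independent.

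The differences coming from $\bar{\Po}_0$ are $\genfrac(){0pt}{1}{x^k-x^0}{\mathbf{0}}$ for $k\in[d]$. Their $x$-parts are linearly independent, because the $x^k$ were chosen affinely independent.

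The differences coming from the other polytopes are $\genfrac(){0pt}{1}{y^i-x^0}{\mathbf{e}_i}$ for $i\in N\setminus\{j\}$.

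Suppose some linear combination of all these differences vanishes. Looking at the $z$-block, the only vectors with a nonzero $z$-part are the second kind, and their $z$-parts are the distinct unit vectors $\mathbf{e}_i$. Hence every coefficient on the second kind must be zero. The vanishing combination then reduces to one of the first kind alone, and the linear independence of their $x$-parts forces those coefficients to be zero as well.

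So the $d+n$ points are affinely independent, and $F_j$ has dimension $d+n-1$. That makes $z_j\ge 0$ facet describing.

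\textbf{Expected obstacle.} There is no real difficulty here; the only point needing care is the block structure in the $z$ coordinates. This is exactly where nonemptiness of every $\Po_i$ is used: one point of each $\Po_i$ suffices, and their images supply the missing $n-1$ independent directions. Full dimensionality of $\Po_0$ is what supplies the $d$ independent directions in $x$.
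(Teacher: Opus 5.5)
Your proof is correct and is the standard argument. Validity follows from $z=\mathbf{e}_i\ge \mathbf{0}$ on every $\bar{\Po}_i$, full dimensionality of $\D$ from Theorem~\ref{thm:fulldim}, and the facet property from $d+n$ points on the face: $d+1$ affinely independent lifts from $\bar{\Po}_0$ plus one point of each $\bar{\Po}_i$, $i\in N\setminus\{j\}$, independent by the block structure in the $z$-coordinates (with the case $n=1$ correctly needing no extra points). This paper does not reprove the result (it is quoted from \cite{QL_DAM2025}), but your argument uses the same point-selection and block-elimination style as the paper's proofs of Theorems~\ref{thm:facet} and~\ref{thm:facets_general_n}.
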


\begin{theorem}[\protect{\cite[Theorem 5]{QL_DAM2025}}]
\label{thm:sumz}
Suppose that $d\geq 1$. 
If all $\Po_i\subset\mathbb{R}^d$ are nonempty, for $i\in N_0$\,,
and $\Po_k$ is full dimensional for some $k\in N$, then $\sum_{j\in N} z_j \leq 1$
is facet describing for $\D$. 
\end{theorem}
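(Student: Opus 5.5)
To prove that $\sum_{j\in N} z_j \leq 1$ is facet describing, I would first check validity and then exhibit $d+n$ affinely independent points of $\D$ lying on the hyperplane $\sum_{j\in N} z_j = 1$. By Theorem~\ref{thm:fulldim}, $\D$ is full dimensional in $\mathbb{R}^{d+n}$, so a facet has dimension $d+n-1$. This is exactly what $d+n$ affinely independent points on the face certify.

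\textbf{Validity.} Every point of $\bar{\mathcal{X}}_i$ has $z=\mathbf{e}_i$, so $\sum_{j\in N} z_j$ equals $0$ for $i=0$ and $1$ for $i\in N$. The inequality therefore holds on all generators of $\D$, hence on their convex hull. The face it describes contains $\bar{\Po}_j$ for every $j\in N$, and each of these is nonempty.

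\textbf{Constructing the points.} Since $\Po_k$ is full dimensional, it has $d+1$ affinely independent points $\hat x^0,\dots,\hat x^d$. These give the points $\genfrac(){0pt}{1}{\hat x^\ell}{\mathbf{e}_k}$ in $\bar{\Po}_k$. For each $j\in N\setminus\{k\}$, I pick an arbitrary point $\hat y^j\in\Po_j$, which exists by nonemptiness, and take $\genfrac(){0pt}{1}{\hat y^j}{\mathbf{e}_j}$. This gives $(d+1)+(n-1)=d+n$ points, all with $\sum_j z_j=1$.

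\textbf{Affine independence.} Suppose $\sum_\ell \lambda_\ell \genfrac(){0pt}{1}{\hat x^\ell}{\mathbf{e}_k} + \sum_{j\neq k}\mu_j \genfrac(){0pt}{1}{\hat y^j}{\mathbf{e}_j} = 0$ with the coefficients summing to zero.
\begin{itemize}
\item Reading off the $z_j$ coordinate for $j\neq k$ gives $\mu_j=0$.
\item The $z_k$ coordinate gives $\sum_\ell\lambda_\ell=0$.
\item The $x$-part then reduces to $\sum_\ell\lambda_\ell \hat x^\ell=0$ with $\sum_\ell\lambda_\ell=0$. Affine independence of the $\hat x^\ell$ forces $\lambda=0$.
\end{itemize}

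The only point needing care is the choice of points, specifically using the full-dimensional $\Po_k$ to supply the $x$-directions and the unit vectors to supply the $z$-directions. The rest is routine.
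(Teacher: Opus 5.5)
Your proof is correct and follows essentially the standard argument for this cited result. You get full dimensionality of $\D$ from Theorem~\ref{thm:fulldim}, take $d+1$ affinely independent points of $\bar{\Po}_k$ plus one point of each $\bar{\Po}_j$ for $j\in N\setminus\{k\}$, and read off affine independence from the $z$-coordinates, which is the same point-exhibiting template the paper uses for Theorems~\ref{thm:facet} and~\ref{thm:facets_general_n}.
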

As in \cite{QL_DAM2025},
we refer to facets of $\D$ other than 
those described by Theorems \ref{thm:nonneg}
and \ref{thm:sumz} as \emph{vertical facets}.

The ``big-M'' method is a well known and
classical method for treating disjunctions 
by using binary variables at the modeling level (see \cite[Section 26-3.I, parts (b--d,g)]{Dantzig}, \cite[Sections 8.1.2--3]{LeeLP}, for example). ``Lifting'' is a general well-known technique
for extending linear inequalities in disjunctive settings.
See, for example, \cite[Chapter II.2, Section 1]{NWbook}. In our setting, there is a close connection between big-M modeling and lifting.
For $j\in N_0$\,, let
$
M_j := \min \{ \beta -\alpha^\top x ~:~ x \in \Po_j\}
$.
Starting with a valid inequality 
$
\alpha^\top x \leq \beta $
for $\Po_0$\,,
we can obtain the \emph{full optimal big-M lifting inequality}
$\alpha^\top x + \sum_{i\in N} M_i z_i \leq \beta$, which is valid for $\D$.
Starting instead with 
 a valid inequality 
$
\alpha^\top x \leq \beta $
for $\Po_k$\,, for some $k\in N$,
we can obtain the  \emph{full optimal big-M lifting inequality}
$\alpha^\top x + \sum_{i\in N \setminus\{k\}} (M_i-M_0) z_i -M_0 z_k\leq \beta  - M_0$\,, which is valid for $\D$.
Some brief points on terminology:
(i) ``big-M'' relates to the fact that
the  coefficients $M_j$ are chosen independently of one another; 
(ii) ``optimal'' refers to the fact that 
the ``big-M'' coefficients $M_j$ are chosen to be as small as possible;
(iii) ``full'' refers to lifting the coefficients on \emph{all} of the $z_i$\, so when $n=1$, we omit the word ``full''.

\begin{theorem}[\protect{\cite[Theorem 3]{QL_DAM2025}}]
\label{thm:facet_lift}
Suppose that $d\geq 1$. 
If all $\Po_i\subset\mathbb{R}^d$ are nonempty, for $i\in N_0$\,,  then 
the full optimal big-M lifting of a facet-describing inequality obtained from a full-dimensional $\Po_k$ (for any $k\in N_0$)
is facet describing for $\D$. 
\end{theorem}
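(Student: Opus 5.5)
Validity is immediate from the construction, so the work is in showing that the face has dimension $d+n-1$. By Theorem~\ref{thm:fulldim}, $\D$ is full dimensional, so it suffices to exhibit $d+n$ affinely independent points of $\D$ satisfying the lifted inequality with equality. I treat $k=0$ and $k\in N$ separately, though the argument is the same. Proposition~\ref{prop:translations} could be used to normalize positions, but it is not needed here.

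Case $k=0$. Let $\alpha^\top x\le\beta$ describe a facet of the full-dimensional $\Po_0$. Then there are $d$ affinely independent extreme points $\hat x^1,\dots,\hat x^d\in\mathcal X_0$ with $\alpha^\top \hat x^\ell=\beta$. The points $\genfrac(){0pt}{1}{\hat x^\ell}{\mathbf e_0}$ lie in $\D$, and since $z=0$ they satisfy the lifted inequality $\alpha^\top x+\sum_{i\in N}M_iz_i\le\beta$ with equality. For each $i\in N$, the set $\Po_i$ is nonempty and compact, so the minimum defining $M_i$ is attained at some $y^i\in\Po_i$ with $\alpha^\top y^i=\beta-M_i$. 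The point $\genfrac(){0pt}{1}{y^i}{\mathbf e_i}\in\bar\Po_i\subset\D$ then gives $\alpha^\top y^i+M_i=\beta$, which is again tight. This produces $d+n$ tight points.

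For affine independence, I subtract the base point $\genfrac(){0pt}{1}{\hat x^1}{0}$. The difference vectors have the block form: the first $d-1$ have $z$-part $0$, and the $x$-parts $\hat x^\ell-\hat x^1$ are linearly independent. The remaining $n$ have $z$-parts $\mathbf e_1,\dots,\mathbf e_n$. The resulting matrix is block triangular with full-rank diagonal blocks, hence has rank $d-1+n$. So the $d+n$ points are affinely independent.

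Case $k\in N$. Take $d$ affinely independent tight points $\hat x^\ell$ of $\Po_k$, placed at $z=\mathbf e_k$. For these the lifted left side is $\alpha^\top\hat x^\ell-M_0=\beta-M_0$, so they are tight. For $i\in N\setminus\{k\}$, use a minimizer $y^i$ of $M_i$ at $z=\mathbf e_i$; the left side is $\beta-M_i+M_i-M_0=\beta-M_0$, so these are tight. Finally, use a minimizer $y^0$ of $M_0$ in $\Po_0$ at $z=0$; the left side is $\beta-M_0$, so it is tight as well. Taking the base point $\genfrac(){0pt}{1}{\hat x^1}{\mathbf e_k}$, the $z$-parts of the differences are $0$ (for the $d-1$ facet points), $\mathbf e_i-\mathbf e_k$ for $i\ne k$, and $-\mathbf e_k$. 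These last $n$ vectors span $\mathbb R^n$. Together with the $d-1$ independent $x$-differences at $z$-part $0$, the same block-triangular rank argument applies.

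The only step needing care is the independence bookkeeping in the case $k\in N$, where the $z$-parts are not standard unit vectors. Nonemptiness of every $\Po_i$ is exactly what guarantees that the $M_i$ are finite and attained.
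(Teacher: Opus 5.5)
Your proof is correct and follows essentially the same approach as this cited result and as the paper's own facet proofs (Theorem~\ref{thm:facet}, Theorem~\ref{thm:facets_general_n}). You lift $d$ affinely independent tight points of the facet of $\Po_k$ at $z=\mathbf{e}_k$, add one minimizer attaining each $M_i$ from every other $\Po_i$, and get affine independence from the nonsingular $z$-block. The two points you leave implicit are routine: validity is exactly the defining property of the $M_i$, and properness of the face follows because $\alpha\neq 0$.
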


%%%%%%%%%%%%%%%%%%%%%%%%%

\subsection{Low dimension and common shape via lifting}\label{sec:prior_characterizations}

The most significant contributions of \cite{QL_DAM2025}
is to identify some broad classes of polytopes
for which full optimal big-M lifting is enough to characterize $\D$ via linear inequalities.
The first result considers low dimensions. 

\begin{theorem}[\protect{\cite[Theorem 6]{QL_DAM2025}}]\label{thm:hull_low}
Suppose that  $n\geq 1$, $d\in\{1,2\}$,
all $\Po_i$ are nonempty, for $i\in N_0$\,, and at least one is full dimensional.
Then
the full optimal big-M lifting of
each facet-describing inequality defining  each $\Po_i$\,,
for
$i \in N_0$\,, together with the inequalities
$\sum_{i\in N} z_i \leq 1$ and  $z_i \geq 0$, for
$i \in N$,
gives the convex hull $\D$.
\end{theorem}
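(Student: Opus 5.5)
The plan is to show directly that every facet of $\D$ is either one of $z_j\ge 0$, $\sum_{j\in N}z_j\le 1$, or the full optimal big-M lifting of a facet-describing inequality of some $\Po_k$. Since all the listed inequalities are valid for $\D$, this gives equality with $\D$. By Theorem \ref{thm:fulldim}, $\D$ is full dimensional, so it is described by its facet-describing inequalities, and a facet has $d+n$ affinely independent extreme points, all of them in $\bigcup_i \bar{\mathcal{X}}_i$.

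Take a facet-describing inequality $\alpha^\top x+\gamma^\top z\le\beta$ of $\D$, and set $\gamma_0:=0$.

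\textbf{Case $\alpha=0$.} The inequality involves $z$ only, so it is facet describing for the projection of $\D$ onto the $z$-coordinates. That projection is the standard simplex $\conv\{\mathbf{e}_i : i\in N_0\}$, whose facets are exactly $z_j\ge 0$ and $\sum_{j\in N} z_j\le 1$.

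\textbf{Case $\alpha\neq 0$.} Restricting to $z=\mathbf{e}_i$ shows that $\alpha^\top x\le \beta-\gamma_i$ is valid for $\Po_i$. Let $F_i$ be the face of $\bar{\Po}_i$ where this holds with equality; it is empty if $\beta-\gamma_i>h_i:=\max\{\alpha^\top x : x\in\Po_i\}$.

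First, every $F_i$ is nonempty. If $F_i=\emptyset$ for some $i\in N$, then the facet lies in the hyperplane $z_i=0$. Since $z_i\ge 0$ is valid for $\D$, the facet would then be the face $\{z_i=0\}\cap\D$, and the inequality would be a positive multiple of $z_i\ge0$ up to equality, contradicting $\alpha\ne0$. The case $F_0=\emptyset$ is handled the same way, with $\sum_{j\in N} z_j=1$ in place of $z_i=0$.

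Hence $\gamma_i=\beta-h_i$ for all $i$, and $\beta=h_0$. The inequality therefore reads $\alpha^\top x+\sum_{i\in N}(h_0-h_i)z_i\le h_0$. With $M_i=h_0-h_i$ relative to the inequality $\alpha^\top x\le h_0$, this is exactly the full optimal big-M lifting of $\alpha^\top x\le h_0$. Equivalently, it is the full optimal big-M lifting of $\alpha^\top x\le h_k$ viewed from $\Po_k$, for any $k$, since the formulas in \S\ref{sec:priorres} agree. It remains to show that $\alpha^\top x\le h_k$ is facet describing for some $\Po_k$ (up to scaling).

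\textbf{Dimension count (main step).} The facet is $\conv\bigcup_i F_i$. Each $F_i$ sits at height $z=\mathbf{e}_i$, and the points $\mathbf{e}_i$ are affinely independent. So the affine hull of $\bigcup_iF_i$ has dimension $n+\dim L$, where $L:=\sum_{i\in N_0}\mathrm{lin}(F_i-F_i)$, projected to $x$-space. Each of these direction spaces lies in $\alpha^\perp$, and being a facet forces $\dim L=d-1$, so $L=\alpha^\perp$.

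For $d=1$ there is nothing more to show: $\alpha^\perp=\{0\}$, so $\alpha=\pm1$ up to scaling, and $\alpha^\top x\le h_k$ is an endpoint inequality of the interval $\Po_k$. It is facet describing for any full-dimensional $\Po_k$, at least one of which exists.

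For $d=2$, $L$ is a line. Each $F_i$ is either a vertex or an edge of a polygon (or a segment or point if $\Po_i$ is lower dimensional). So some $F_k$ must be a segment parallel to $\alpha^\perp$. If $\Po_k$ is full dimensional, $F_k$ is an edge of $\Po_k$, so $\alpha^\top x\le h_k$ describes a facet of $\Po_k$, and we are done.

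\textbf{Expected obstacle.} The delicate point is the degenerate situation where the only segment $F_k$ comes from a lower-dimensional $\Po_k$ (a segment in $\mathbb{R}^2$). Its facet-describing inequalities in the paper's convention would then need interpretation, e.g.\ via its inequality description including the implied equalities. I would handle this by noting that $\alpha^\top x\le h_k$ is then one of the two inequalities $\pm a^\top x\le\pm c$ describing the line containing $\Po_k$, which are among its defining inequalities. I would check this convention carefully, since it is the one spot where "each inequality describes a facet" must be read appropriately for non-full-dimensional $\Po_i$.
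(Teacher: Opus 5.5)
The paper does not prove this statement; it quotes it from \cite{QL_DAM2025}, so there is no in-text argument to compare against. Judged on its own, your proof is correct. A vertical facet cannot miss any $\bar{\Po}_i$: otherwise it lies in, and hence equals, the proper face cut out by $z_i=0$ or by $\sum_{j\in N}z_j=1$. This pins the $z$-coefficients to $h_0-h_i$, and so identifies the inequality as the full optimal big-M lifting of $\alpha^\top x\le h_k$ from any $k$. The direction-space count then gives $\sum_i\mathrm{lin}(F_i-F_i)=\alpha^\perp$. For $d=1$, the independence of the lifting from $k$ lets you use any full-dimensional $\Po_k$. For $d=2$ with $\Po_k$ full dimensional, the segment $F_k$ is an edge. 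In the case $\alpha=0$, your passage to the projection is justified because the facet lies in $\mathbb{R}^d\times G$, where $G$ is the face of the simplex cut out by the inequality; this forces $\dim G\ge n-1$.

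Your flagged obstacle cannot arise in this paper's setting, since \S\ref{sec:def} assumes every $\Po_i$ is full dimensional. Under the weaker quoted hypothesis (only one $\Po_i$ full dimensional), however, it is not merely a convention check: with the usual meaning of facet describing, the conclusion genuinely fails there. Take $n=1$, $\Po_0=[0,1]^2$, and $\Po_1=\conv\{(0,5),(1,6)\}$. The inequality $-x_1+x_2-4z_1\le 1$ is valid for $\D$ and tight at the affinely independent points $(0,1,0)$, $(0,5,1)$, $(1,6,1)$, so it describes a facet. But $(-1,1)$ is not a facet normal of the square, and $-x_1+x_2$ is constant on $\Po_1$, so this inequality is not the lifting of any facet-describing inequality of $\Po_0$ or $\Po_1$. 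It is, however, the optimal big-M lifting (with $M_0=4$) of $-x_1+x_2\le 5$, an implicit-equality inequality of $\Po_1$. So the reading you propose is forced, and it suffices. Any inequality system for a segment $\Po_k\subset\mathbb{R}^2$ contains positive multiples of both $a^\top x\le c$ and $-a^\top x\le -c$, because its implicit-equality rows satisfy $\{x : A^{=}x\le b^{=}\}=\mathrm{aff}(\Po_k)$, which is a line.
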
   
We note that Theorem \ref{thm:hull_low}
does not extend to $d=3$; see \cite[Proposition 9]{QL_DAM2025},
as well as Example \ref{ex:2cross} in what follows.

Another broad situation where 
full optimal big-M lifting is enough to characterize $\D$ via linear inequalities
is, roughly speaking, when the $\Po_i$ (in arbitrary dimension $d$) all have the ``same shape'', in which case we
obtain a rather compact inequality description of $\D$:

\begin{theorem}[\protect{\cite[Theorem 15]{QL_DAM2025}}]\label{thm:hull_shape}
For $n \geq 1$ and 
$d \geq 1$, 
let $\Po_i:=\{ x\in\mathbb{R}^d ~:~ \bm{A} x \leq \bm{b}^i\}$
be full dimensional, where 
$\bm{b}^i\in\mathbb{R}^m$, for  $i\in N_0$\,.
We assume that the (single) matrix $\bm{A}$ has full column rank, and that for every row $\bm{A}_{j\cdot}$ of $\bm{A}$,
the inequality $\bm{A}_{j\cdot} x \leq \bm{b}^i_k$ describes a 
nonempty face of every $\Po_i$ and a 
facet of some $\Po_i$\,.
We further suppose:
\vskip-15pt

\begin{equation}
\tag{$\mathrm{\Phi}$}\label{Phi}
\begin{array}{l}
\mbox{For every
basic partition $\tau,\eta$ of the \emph{row} indices of the matrix $\bm{A}$,}\\
\mbox{we have that if }\bm{A}_{\eta\cdot} \bm{A}_{\tau \cdot}^{-1} \bm{b}^i_\tau \leq \bm{b}^i_\eta\,,
\mbox{ holds for some $i$ in $N_0$\,, then it}\\
\mbox{holds for every $i$ in $N_0$\,.}
\end{array}
\end{equation}
\vskip-7pt

\noindent Then
the full optimal big-M lifting inequalities 
$\bm{A} x + \sum_{i \in N} (\bm{b}^0 - \bm{b}^i) z_i \leq \bm{b}^0$\,, 
together with $\sum_{j\in N} z_j \leq 1$, and 
  $z_j\geq 0, \mbox{ for } j\in N$,
gives a minimal inequality description of the convex hull $\D$.
\end{theorem}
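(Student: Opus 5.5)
The plan is to prove validity, then the reverse inclusion slice by slice in $z$ via support functions, and finally minimality. Let $\mathcal{Q}\subseteq\mathbb{R}^{d+n}$ denote the polyhedron defined by the system
$\bm{A} x + \sum_{i\in N}(\bm{b}^0-\bm{b}^i)z_i\leq \bm{b}^0$, $\sum_{j\in N}z_j\leq 1$, $z_j\geq 0$.

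\emph{Validity ($\D\subseteq\mathcal{Q}$).} Each row $\bm{A}_{j\cdot}x\le \bm{b}^i_j$ describes a nonempty face of every $\Po_i$, so $\max\{\bm{A}_{j\cdot}x : x\in\Po_i\}=\bm{b}^i_j$. Hence the optimal big-M coefficient for row $j$ of $\Po_0$ is $M_i=\bm{b}^0_j-\bm{b}^i_j$. The stated inequalities are therefore exactly the full optimal big-M liftings, which are valid for $\D$. The same inequalities arise whether one starts from $\Po_0$ or from $\Po_k$. The inequalities on $z$ are trivially valid.

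\emph{Reverse inclusion.} I would argue by slices in $z$. Fix $z$ in the simplex and put $\lambda_0:=1-\sum_{i\in N} z_i$ and $\lambda_i:=z_i$. Then the slice is
\[
\mathcal{Q}_z=\{x:\bm{A}x\le \bm{b}(z)\},\qquad \bm{b}(z):=\sum_{i\in N_0}\lambda_i\bm{b}^i .
\]
The corresponding slice of $\D$ contains the Minkowski combination $\sum_{i\in N_0}\lambda_i\Po_i$, since $\sum_i\lambda_i\genfrac(){0pt}{1}{x^i}{\mathbf{e}_i}$ with $x^i\in\Po_i$ lies in $\D$. So it suffices to show $\mathcal{Q}_z\subseteq \sum_i\lambda_i\Po_i$. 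The reverse containment is clear because the system is linear in $\bm{b}$.

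I would prove this by comparing support functions $h(c)=\max c^\top x$. Fix $c$ for which $\max\{c^\top x: x\in\Po_0\}$ is finite; since $\Po_0$ is bounded and full dimensional, this holds for every $c$. Because $\bm{A}$ has full column rank, the LP over $\Po_0$ has an optimal basic solution. This gives a basic partition $\tau,\eta$ with $c^\top=y^\top\bm{A}_{\tau\cdot}$, $y\ge 0$ (dual feasibility), and $\bm{A}_{\eta\cdot}\bm{A}_{\tau\cdot}^{-1}\bm{b}^0_\tau\le \bm{b}^0_\eta$ (primal feasibility).

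By \eqref{Phi}, primal feasibility of $\tau$ then holds for every $\bm{b}^i$, and by linearity also for $\bm{b}(z)$. Hence $\tau$ is an optimal basis simultaneously for every $\Po_i$ and for $\mathcal{Q}_z$. Only weak duality is needed for the upper bound on $\mathcal{Q}_z$, and primal feasibility of the basic point gives the lower bound. It follows that
\[
h_{\mathcal{Q}_z}(c)=y^\top\bm{b}(z)_\tau=\sum_i\lambda_i\,y^\top \bm{b}^i_\tau=\sum_i\lambda_i h_{\Po_i}(c)=h_{\sum_i\lambda_i\Po_i}(c).
\]
Equal support functions on all $c$ give equal closed convex sets. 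Hence $\mathcal{Q}_z=\sum_i\lambda_i\Po_i\subseteq\D$, and so $\mathcal{Q}=\D$. This step, namely transferring the optimal basis across all right-hand sides using \eqref{Phi}, is the heart of the argument and the main point to check carefully. In particular, the direction in which \eqref{Phi} is invoked (from $\bm{b}^0$ to every $\bm{b}^i$) must match the hypothesis, and the subsequent convex combination must preserve feasibility.

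\emph{Minimality.} Theorems \ref{thm:nonneg} and \ref{thm:sumz} give that $z_j\ge0$ and $\sum_j z_j\le1$ are facet describing. For each row $j$, pick some $i$ for which $\bm{A}_{j\cdot}x\le\bm{b}^i_j$ is a facet of $\Po_i$. Its full optimal big-M lifting coincides with the $j$-th lifted inequality, since it is the same inequality rewritten. Theorem \ref{thm:facet_lift} then shows it is facet describing for $\D$. It remains to note that distinct rows give distinct facets. This holds because each row describes a facet of some $\Po_i$, so no two rows are positive multiples of one another.
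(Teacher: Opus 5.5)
This paper does not prove the statement itself; it quotes it from \cite[Theorem 15]{QL_DAM2025}. There is therefore no in-paper argument to compare yours against, and I can only assess it on its own terms. On those terms it is correct.

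The key step is sound. Dual feasibility of a basis $\tau$, namely $c^\top\bm{A}_{\tau\cdot}^{-1}\geq 0$, depends only on $c$. Meanwhile \eqref{Phi} transfers primal feasibility of $\tau$ from $\bm{b}^0$ to every $\bm{b}^i$, and your convexity remark carries it on to $\bm{b}(z)$. Hence a single pair $(\tau,y)$ certifies both $h_{\Po_i}(c)=y^\top\bm{b}^i_\tau$ for all $i\in N_0$ and $h_{\mathcal{Q}_z}(c)=y^\top\bm{b}(z)_\tau$. So $\mathcal{Q}_z$ equals $\sum_i\lambda_i\Po_i$. Because $\mathbf{e}_0,\ldots,\mathbf{e}_n$ are affinely independent, this Minkowski combination is exactly the $z$-slice of $\D$.

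Your route gives a transparent reason why \eqref{Phi} is the right hypothesis: the $\Po_i$ share all their feasible bases, so their support functions are affine in the right-hand side. It also makes visible that the equality $\mathcal{Q}=\D$ uses only full column rank of $\bm{A}$, the fact that $\Po_0$ is a nonempty polytope, and \eqref{Phi}. The nonempty-face and facet hypotheses enter only to recognize the rows as optimal big-M liftings and to obtain minimality via Theorem~\ref{thm:facet_lift}.

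Two small points need tightening.

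First, an optimal vertex of $\max\{c^\top x : x\in\Po_0\}$ may be degenerate, and then not every basis describing it is dual feasible. You should say how $\tau$ is chosen. For example, by Carath\'eodory, write $c$ as a nonnegative combination of linearly independent rows active at the optimal vertex, then extend these rows to a basis using further active rows. Alternatively, invoke finite termination of the simplex method with an anti-cycling rule.

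Second, in the minimality step, ``each row describes a facet of some $\Po_i$'' does not imply that no two rows are positive multiples, since a duplicated row describes the same facet. In fact, if $\bm{A}_{j'\cdot}=\mu\bm{A}_{j\cdot}$ with $\mu>0$, the nonempty-face hypothesis forces $\bm{b}^i_{j'}=\mu\bm{b}^i_j$ for all $i$. The two lifted inequalities then coincide up to scaling, and the description would not be minimal. So you should state explicitly the tacit assumption, consistent with the standing convention of \S\ref{sec:def}, that no row of $\bm{A}$ is a positive multiple of another. (Negative multiples are fine and indeed typical.) Under that assumption, your conclusion follows: $\D$ is full dimensional by Theorem~\ref{thm:fulldim}, so inequalities whose $x$-parts are not positive multiples of each other describe distinct facets.
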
 
Note that in the setting addressed by Theorem 
\ref{thm:hull_shape}, there is exactly one 
 full optimal big-M lifting inequality for
 each row of $\bm{A}$, independent of $n$. 

A simple special case of Theorem 
\ref{thm:hull_shape} is for hyper-rectangles:

\begin{corollary}[\protect{\cite[Theorem 7]{QL_ISCO2024},
\cite[Corollary 18]{QL_DAM2025}}]
\label{cor:hyper}
For $n \geq 1$ and 
$d \geq 1$, we consider  $n+1$ hyper-rectangles 
$\Po_j:= [\bm{\ell}_{j1}\,,\bm{u}_{j1}] \times \cdots \times [\bm{\ell}_{jd}\,,\bm{u}_{jd}]$,
for $j\in N_0$\,.
The full optimal big-M lifting inequalities 
$x_i + \textstyle  \sum_{j \in N} (\bm{\ell}_{0i} - \bm{\ell}_{ji}) z_j \geq \bm{\ell}_{0i}$\,, 
$\textstyle x_i + \sum_{j \in N} (\bm{u}_{0i} - \bm{u}_{ji}) z_j \leq \bm{u}_{0i}\,, \text{ for all } i = 1, \cdots, d$,
together with $\sum_{j\in N} z_j \leq 1$, and 
  $z_j\geq 0, \mbox{ for } j\in N$,
gives the convex hull $\D$.  
\end{corollary}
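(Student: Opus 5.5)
Corollary \ref{cor:hyper} follows by specializing Theorem \ref{thm:hull_shape}. Take $m=2d$ rows and stack the matrix as $\bm{A} := \genfrac(){0pt}{1}{I_d}{-I_d}$. For $j\in N_0$, set $\bm{b}^j := \genfrac(){0pt}{1}{\bm{u}_{j\cdot}}{-\bm{\ell}_{j\cdot}}$. Then $\Po_j=\{x : \bm{A}x\le \bm{b}^j\}$. We must assume each hyper-rectangle is full dimensional, i.e.\ $\bm{\ell}_{ji}<\bm{u}_{ji}$; this is implicit in the setup, since all $\Po_i$ are assumed full dimensional. Clearly $\bm{A}$ has full column rank.

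Next we check the face conditions for each row. Row $e_i^\top x\le \bm{u}_{ji}$ describes a facet of $\Po_j$, namely the face $x_i=\bm{u}_{ji}$, because the box is full dimensional. So every row describes a nonempty face, indeed a facet, of every $\Po_j$. The same holds for the rows $-e_i^\top x\le -\bm{\ell}_{ji}$.

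The main point is condition (\ref{Phi}). A basic partition $\tau,\eta$ picks $d$ linearly independent rows, and these rows must come from $d$ distinct coordinates. So for each $i$, $\tau$ contains exactly one of the upper or lower rows. The basic solution $\bm{A}_{\tau\cdot}^{-1}\bm{b}^j_\tau$ is then the point whose $i$-th coordinate is $\bm{u}_{ji}$ or $\bm{\ell}_{ji}$, according to the choice in $\tau$. The rows in $\eta$ are the opposite bounds, so the inequalities $\bm{A}_{\eta\cdot}\bm{A}_{\tau\cdot}^{-1}\bm{b}^j_\tau\le \bm{b}^j_\eta$ reduce to $\bm{\ell}_{ji}\le \bm{u}_{ji}$ for each $i$. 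These hold for every $j\in N_0$, so the implication in (\ref{Phi}) is trivially satisfied: every basic solution is feasible, namely a vertex of the box, for all $j$ simultaneously.

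Applying Theorem \ref{thm:hull_shape}, the lifted inequalities $\bm{A}x+\sum_{j\in N}(\bm{b}^0-\bm{b}^j)z_j\le \bm{b}^0$ read, in the upper rows, $x_i+\sum_j(\bm{u}_{0i}-\bm{u}_{ji})z_j\le \bm{u}_{0i}$. In the lower rows they read $-x_i+\sum_j(-\bm{\ell}_{0i}+\bm{\ell}_{ji})z_j\le -\bm{\ell}_{0i}$, which after multiplying by $-1$ is $x_i+\sum_j(\bm{\ell}_{0i}-\bm{\ell}_{ji})z_j\ge \bm{\ell}_{0i}$. Together with $\sum_j z_j\le1$ and $z_j\ge0$, these are exactly the stated system, and they give $\mathcal{D}$.

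There is no real obstacle. The only care needed is in identifying the basic partitions, that is, in arguing that linear independence forces exactly one row per coordinate.
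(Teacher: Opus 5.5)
Your proposal is correct and takes the same route the paper indicates: the paper presents this corollary as a simple special case of Theorem~\ref{thm:hull_shape}, and your instantiation (stacking $I_d$ over $-I_d$, with $\bm{b}^j$ built from $\bm{u}_{j\cdot}$ and $-\bm{\ell}_{j\cdot}$) is exactly that specialization. Your check of (\ref{Phi}) is right: every basic partition takes one bound per coordinate, so every basic solution is a vertex of every box. The sign flip that recovers the lower-bound inequalities is also right.
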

Of course in the case that $\bm{\ell}_{j\cdot}=-\mathbf{e}\in\mathbb{R}^d$ and 
$\bm{u}_{j\cdot}=\mathbf{e}\in\mathbb{R}^d$,
the polytope $\Po_j$ is the 
unit ball of the $\infty$-norm.

%%%%%%%%%%%%%

\section{Generalized cross polytopes}\label{sec:newresults_cross}

We are interested in exploring 
facet characterizations of $\D$ for scenarios
not covered by those in Section \ref{sec:prior_characterizations}.
As Corollary \ref{cor:hyper} covers
the unit ball of the $\infty$-norm, 
it is intriguing to consider a kind of polar opposite;
i.e., 
families of polytopes that include 
the unit ball of the $1$-norm, namely the standard cross polytope.

We define $n+1$ \emph{generalized cross polytopes}: For 
$d \geq 1$ and $\bm{a}^k\in \mathbb{R}^d_{++}$\,, for 
$k\in N_0$\,, we consider the generalized cross polytopes 
$\Po_k:= \{x \in \mathbb{R}^d: \sum_{j \in [d]} s_{j} \bm{a}^k_{j} x_j \leq 1, \mbox{ for all } 
s:=(s_1,\cdots,s_d) \in \{\pm1 \}^d\}$.
While each $\Po_k$ is defined via
$2^d$  linear inequalities,
we can also see it as
the solution set of the 
single nonlinear inequality
$ \sum_{j \in [d]} \bm{a}^k_{j} |x_j| \leq 1$.
The extreme points of $\Po_k$ are 
the $2d$ points 
\[
\hat{x}^{k,i} := \left\{\left( \genfrac{}{}{0pt}{1}{\pm \frac{1}{\bm{a}_i^k} \mathbf{e}_i}{\mathbf{e}_k} \right) \right\}, i \in [d] \mbox{ for }k\in N_0\,.
\]
Because our generalized cross polytopes are full dimensional, 
Theorems \ref{thm:computing}--\ref{thm:facet_lift}
apply. 

A (standard) \emph{cross polytope} (see \cite[p. 8]{ziegler}, for example),
which is the unit ball of the 1-norm,
has $\bm{a}^k:=\mathbf{e}$ and can be seen
as the solution set of
$ \sum_{j \in [d]} |x_j| \leq 1$.
We note that due to Proposition \ref{prop:translations},
our results in this section apply (under an appropriate affine
transformation) to \emph{translated} generalized cross polytopes as well.

\subsection{\texorpdfstring{$n=1$}{n=1}}\label{sec:n=1}

To work toward describing $\D$ by linear inequalities, we must set some notation.
For $j \in [d]$, we let $r_j :=
\left. \bm{a}^0_j \middle /\bm{a}^1_j\right.$\,.
In what follows, we assume, without loss of generality, that 
$r_1\leq r_2 \leq \cdots \leq r_d$\,.

We partition $[d]$, depending on the $r_j$\,, into blocks
 $\mathcal{L}(j) := \{i \in [d]: r_i < r_j\}$, 
 $\mathcal{E}(j) := \{i \in [d]: r_i = r_j\}$, 
 $\mathcal{G}(j) := \{i \in [d]: r_i > r_j\}$. 
For each $i\in[d]$, we define 
\[
\alpha_i(j):= 
\begin{cases}
\bm{a}_i^0= r_j \bm{a}_i^1\,, & \mbox{for } i \in \mathcal{E}(j); \\ 
\bm{a}_i^0\,, & \mbox{for } i \in \mathcal{L}(j); \\ 
r_j \bm{a}_i^1\,, & \mbox{for } i \in \mathcal{G}(j).
\end{cases}
\]

Define  $M_0(j) := \max_{ i \in [d]}\alpha_i(j)/\bm{a}^0_i $\,, $M_1(j) := \max_{ i \in [d]}\alpha_i(j)/\bm{a}^1_i $\,.
We first observe that \begin{align*}
\alpha_i(j)/\bm{a}^0_i :=& 
\begin{cases}
\frac{ \bm{a}_i^0}{\bm{a}^0_i} = \frac{r_j \bm{a}_i^1}{{\bm{a}^0_i}}\,, & \mbox{for } i \in \mathcal{E}(j); \\ 
\frac{ \bm{a}_i^0}{\bm{a}^0_i}\,, & \mbox{for } i \in \mathcal{L}(j); \\ 
\frac{r_j \bm{a}_i^1}{\bm{a}^0_i}\,, & \mbox{for } i \in \mathcal{G}(j).
\end{cases} \\[5pt]
=& 
\begin{cases}
1 = \frac{r_j}{r_i}\,, & \mbox{for } i \in \mathcal{E}(j); \\ 
1\,, & \mbox{for } i \in \mathcal{L}(j); \\ 
\frac{r_j}{r_i}\,, & \mbox{for } i \in \mathcal{G}(j).
\end{cases}
\end{align*}
If $i \in \mathcal{L}(j) \cup \mathcal{E}(j)$, then $r_i \leq r_j$, so $\frac{r_j}{r_i} \geq 1$, and $\alpha_i(j)/\bm{a}^0_i = 1$.
If $i \in \mathcal{G}(j)$, then $r_i > r_j$, so $\frac{r_j}{r_i} < 1$, and $\alpha_i(j)/\bm{a}^0_i = \frac{r_j}{r_i} < 1$. So $\alpha_i(j)/\bm{a}^0_i = 1$ when $i \in \mathcal{L}(j) \cup \mathcal{E}(j)$, and $\alpha_i(j)/\bm{a}^0_i = \frac{r_j}{r_i} < 1$ when $i \in \mathcal{G}(j)$. Because $j \in \mathcal{E}(j)$, we can attain the maximum, and therefore, $M_0(j) := \max_{ i \in [d]}\alpha_i(j)/\bm{a}^0_i = 1$.

Similarly,
\begin{align*}
\alpha_i(j)/\bm{a}^1_i :=& 
\begin{cases}
\frac{\bm{a}_i^0}{\bm{a}^1_i} = \frac{r_j \bm{a}_i^1}{{\bm{a}^1_i}}\,, & \mbox{for } i \in \mathcal{E}(j); \\ 
\frac{\bm{a}_i^0}{\bm{a}^1_i}\,, & \mbox{for } i \in \mathcal{L}(j); \\ 
\frac{r_j \bm{a}_i^1}{\bm{a}^1_i}\,, & \mbox{for } i \in \mathcal{G}(j).
\end{cases}\\[5pt]
= &
\begin{cases}
r_i = r_j\,, & \mbox{for } i \in \mathcal{E}(j); \\ 
r_i\,, & \mbox{for } i \in \mathcal{L}(j); \\ 
r_j\,, & \mbox{for } i \in \mathcal{G}(j).
\end{cases}
\end{align*}
If $i \in \mathcal{L}(j) \cup \mathcal{E}(j)$, then $r_i \leq r_j$, and $\alpha_i(j)/\bm{a}^1_i = r_i$.
If $i \in \mathcal{G}(j)$, then $r_i > r_j$, and $\alpha_i(j)/\bm{a}^1_i = r_j$. So $\alpha_i(j)/\bm{a}^1_i = r_i \leq r_j$ when $i \in \mathcal{L}(j) \cup \mathcal{E}(j)$, and $\alpha_i(j)/\bm{a}^1_i = r_j$ when $i \in \mathcal{G}(j)$. Because $j \in \mathcal{E}(j)$, we can attain the maximum, and therefore, $M_1(j) := \max_{ i \in [d]}\alpha_i(j)/\bm{a}^1_i = r_j$.

We will see that for a pair of
generalized cross polytopes, the vertical 
facets of $\D$ are described by
the linear inequalities
\begin{equation}\tag{$F(j,s)$}\label{Falpha}
\textstyle \sum_{i\in[d]} s_i \alpha_i(j) x_i + (M_0(j) - M_1(j) ) z_1 \leq M_0(j),
\end{equation}
for all $j\in[d]$ and  $s\in \{\pm1 \}^d$. 
We can also view these inequalities in the more compact nonlinear form
\begin{equation}\tag{$\bar{F}(j)$}\label{Falpha_bar}
\textstyle \sum_{i\in[d]} \alpha_i(j) |x_i| + (M_0(j) - M_1(j) ) z_1 \leq M_0(j),
\end{equation}
for all $j\in[d]$. 

\begin{example}\label{ex:2cross}
Let 
\[
\Po_0 := \{ x \in \mathbb{R}^3 ~: \pm 2 x_1 \pm 3 x_2 \pm 4 x_3 \leq 1
\},
\]
and 
\[
\Po_1 := \{ x \in \mathbb{R}^3 ~:~  \pm 5 x_1 \pm 6 x_2 \pm 7 x_3 \leq 1
\}.
\]
We have $r = (\frac{2}{5}, \frac{1}{2}, \frac{4}{7})$.
Notice that we conveniently arranged the indices $i\in[d]$ so that $r$ is sorted. 

For each $j\in[d]$, we have $2^d$ 
inequalities, 
which we calculate as follows:

\begin{itemize}[leftmargin=10.5mm]
\item[$j=1$:] $r_1 = \frac{2}{5},~ \mathcal{L}(1) = \varnothing,~ \mathcal{E}(1) = \{1\},~ \mathcal{G}(1) = \{2,3\}$. \\[2pt]
$M_0(1) = 1$, $M_1(1) = r_1= 1 \cdot \frac{2}{5} = \frac{2}{5}$.\\[2pt]
$\alpha_1(1) = \bm{a}^0_1 = 1 \cdot 2 = 2 = \frac{2}{5} \cdot 5 = r_1 \bm{a}^1_1$\,,\\[2pt]
$\alpha_2(1) = r_1 \bm{a}^1_2 = \frac{2}{5} \cdot 6 = \frac{12}{5}$,\\[2pt]
$\alpha_3(1) = r_1 \bm{a}^1_3 = \frac{2}{5} \cdot 7 = \frac{14}{5}$.\\[2pt]
Which (scaling by $5/2$) yields~   $\pm 5 x_1 \pm 6 x_2 \pm 7x_3 + \frac{3}{2} z_1 \leq \frac{5}{2}$.
\item[$j=2$:] $r_2 = \frac{1}{2},~ \mathcal{L}(2) = \{1\},~ \mathcal{E}(2) = \{2\},~ \mathcal{G}(2) = \{3\}$. \\[2pt]
$M_0(2) = 1$, $M_1(2) = r_2 = 1 \cdot \frac{3}{6} = \frac{1}{2}$.\\[2pt]
$\alpha_1(2) = \bm{a}^0_1 = 1 \cdot 2 = 2$,\\[2pt]
$\alpha_2(2) = \bm{a}^0_2 = 1 \cdot 3 = 3 = \frac{1}{2} \cdot 6 = r_2 \bm{a}^1_2$\,,\\[2pt]
$\alpha_3(2) = r_2 \bm{a}^1_3 = \frac{1}{2} \cdot 7 = \frac{7}{2}$.\\[2pt]
Which (scaling by $2$) yields~   $\pm 4 x_1 \pm 6 x_2 \pm 7x_3 + z_1 \leq 2$.
\item[$j=3$:]  $r_3 = \frac{4}{7},~ \mathcal{L}(3) = \{1,2\},~ \mathcal{E}(3) = \{3\},~ \mathcal{G}(3) = \varnothing$.\\[2pt]
$M_0(3) = 1$, $M_1(3) = r_3 = 1 \cdot \frac{4}{7} = \frac{4}{7}$.\\[2pt]
$\alpha_1(3) = \bm{a}^0_1 = 1 \cdot 2 = 2$,\\[2pt]
$\alpha_2(3) = \bm{a}^0_2 = 1 \cdot 3 = 3$,\\[2pt]
$\alpha_3(3) = \bm{a}^0_3 = 1 \cdot 4 = 4 = \frac{4}{7} \cdot 7 = r_3 \bm{a}^1_3$\,.\\[2pt]
Which yields~   $\pm 2 x_1 \pm 3 x_2 \pm 4 x_3 + \frac{3}{7} z_1 \leq 1$.
\end{itemize}
We can observe that for $j=1$ (respectively, $j=3$), the absolute values of the coefficients of the $x_i$ match the coefficients of the $x_i$ for the inequality used to define $\Po_1$ ($\Po_0$).
In contrast, for $j=2$, the coefficients of the $x_i$
are not directly tethered to the inequalities define $\Po_0$ and $\Po_1$). This establishes that we may or may not get lifting inequalities from this construction, depending on the choice of $j\in[d]$.  

Considering, for example, the non-lifting facet described by 
$4 x_1 + 6 x_2 +7x_3 + z_1 \leq 2$, 
we can see that it is satisfied as an 
equation only by the extreme points $(\frac{1}{2},0,0,0)^\top$ and 
$(0,\frac{1}{3},0,0)^\top$ of $\bar{\Po}_0$\,,
and only by the extreme points
$(0,0,\frac{1}{7},1)^\top$ and
$(0,\frac{1}{6},0,1)^\top$,
of $\bar{\Po}_1$\,. This is
a direct way to see that this inequality
describes a non-lifting facet.

We note that because
we have facets of $\D$ that are not described by lifting inequalities for
this example, we can see indirectly that
$\Po_0$ and $\Po_1$ do not have the same shape, in the sense of Theorem \ref{thm:hull_shape}.

%%%%%%%%%%%%%%%%%%%%%%
\begin{figure}
    \centering

    \begin{subfigure}{\textwidth}
        \centering
        \includegraphics[width=0.6\linewidth, trim=0.6cm 2.5cm 0.6cm 3cm, clip]{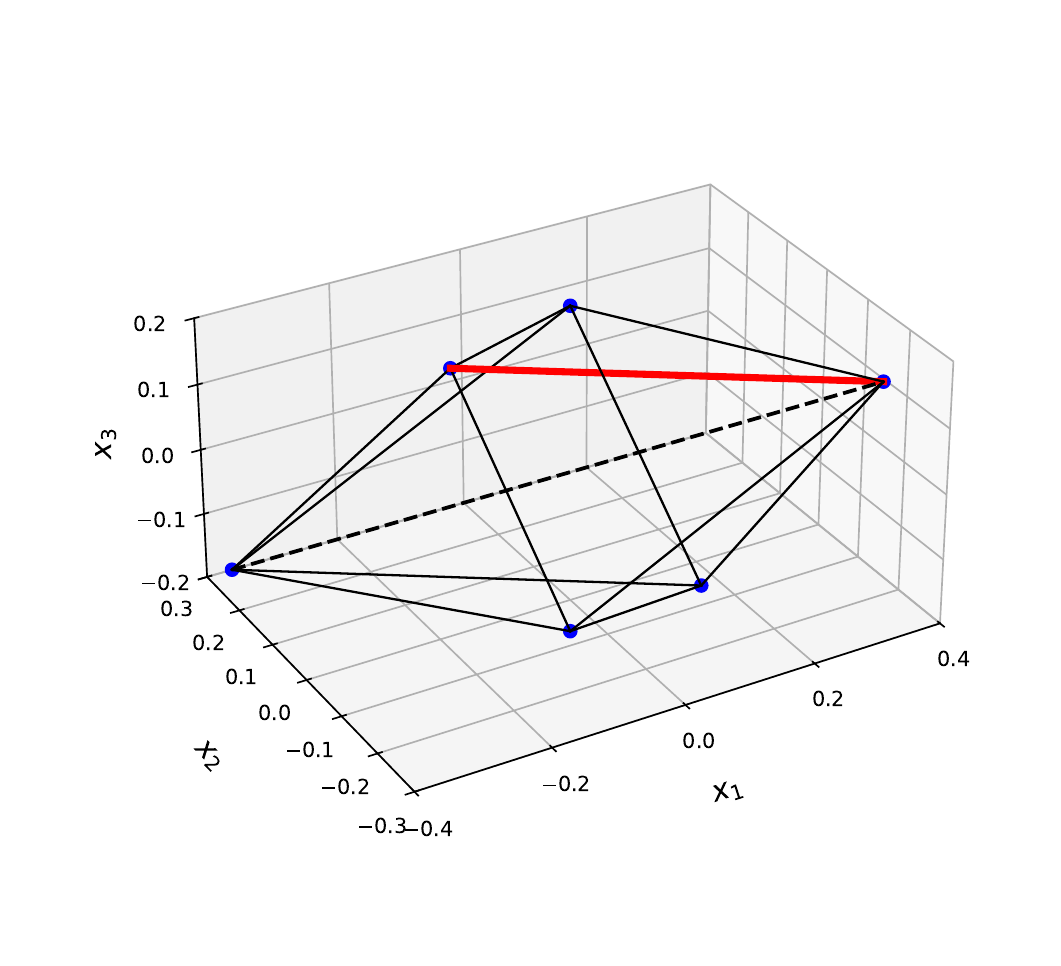}
        \caption{Slice $z_1=0$}
    \end{subfigure}

    \begin{subfigure}{\textwidth}
        \centering
        \includegraphics[width=0.6\linewidth, trim=0.6cm 2.5cm 0.6cm 3cm, clip]{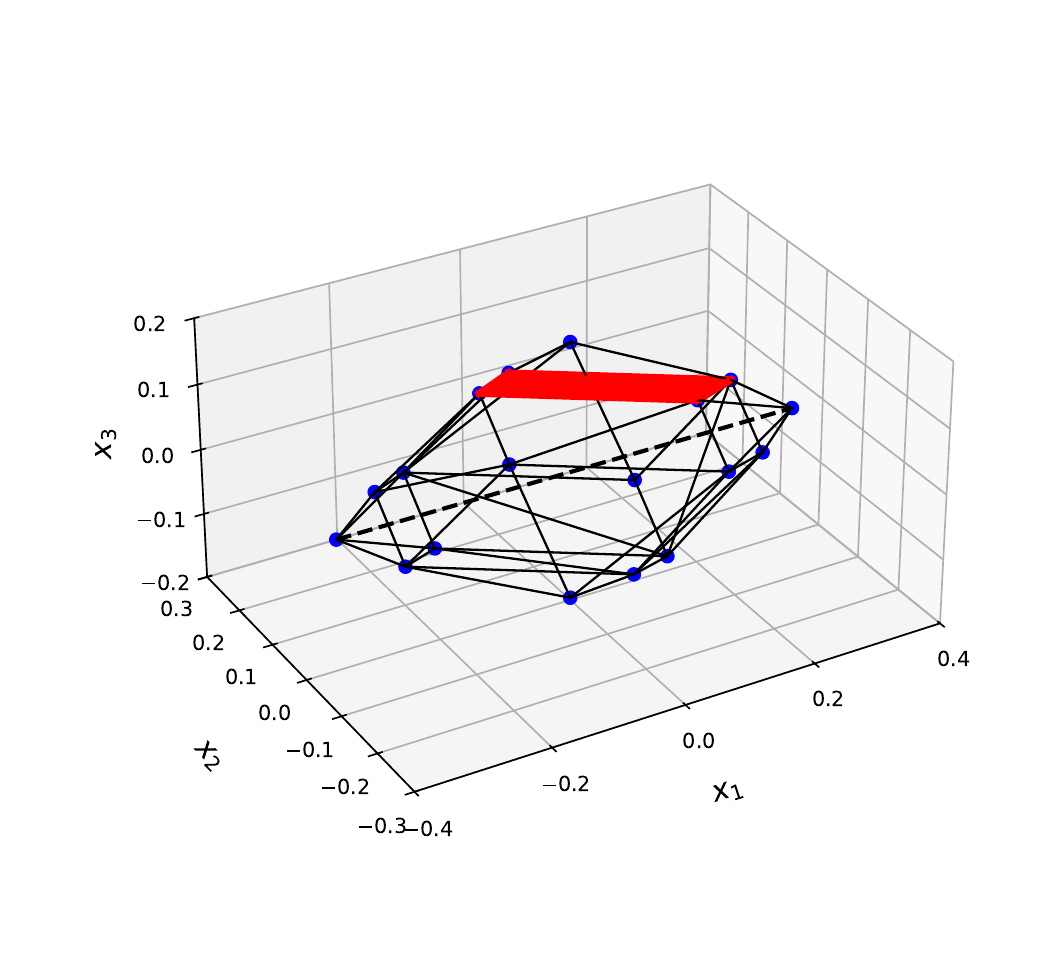}
        \caption{Slice $z_1=\frac{1}{2}$}
    \end{subfigure}

    \begin{subfigure}{\textwidth}
        \centering
        \includegraphics[width=0.6\linewidth, trim=0.6cm 2.5cm 0.6cm 3cm, clip]{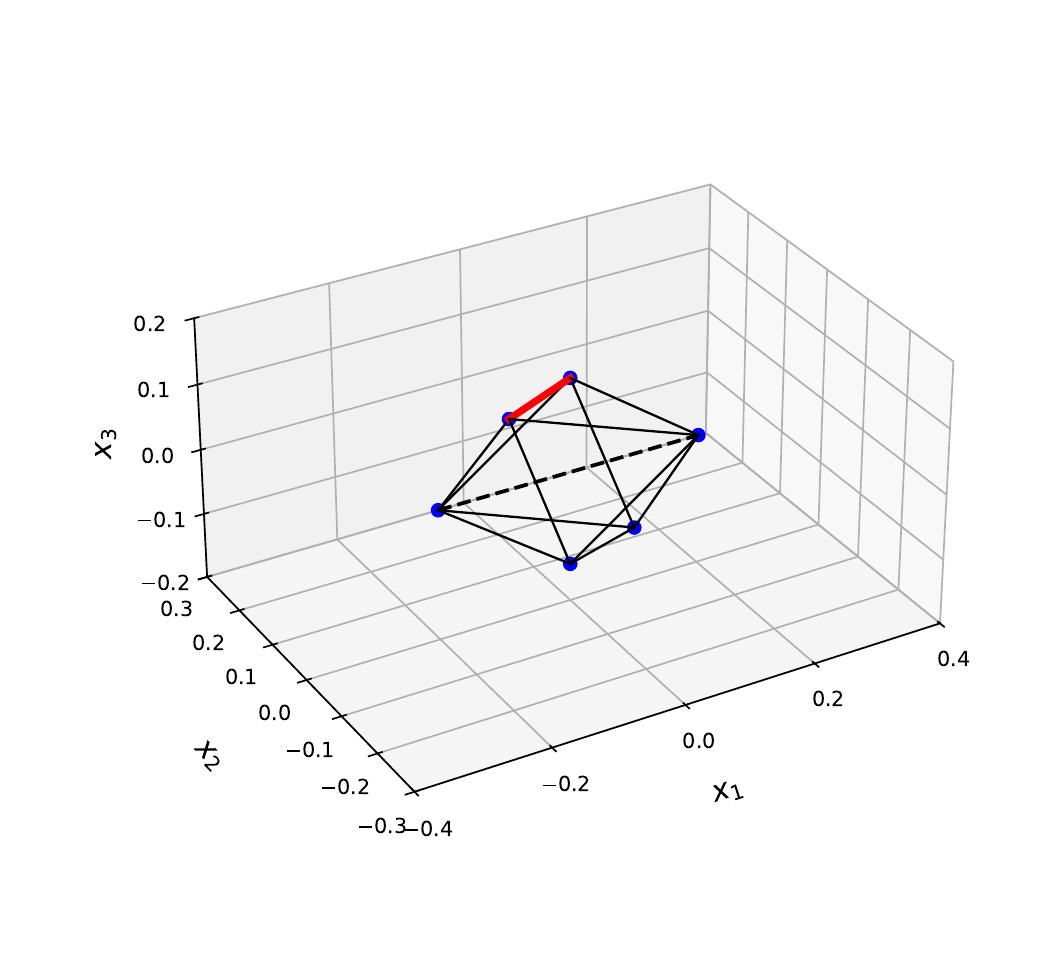}
        \caption{Slice $z_1=1$}
    \end{subfigure}
\caption{Three slices of $\mathcal{D}$ for Example \ref{ex:2cross}}\label{fig1}  
\end{figure}
%%%%%%%%%%%%%%%%%

We depict three slices of $\D$ in Figure \ref{fig1}. The first slice has 
$z_1=0$, and we recover the generalized cross polytope $\Po_0$\,.
The last slice has 
$z_1=1$, and we recover the generalized cross polytope $\Po_1$\,.
We get a geometric hint of the complexity of 
$\D$ by considering the intermediate slice with $z_1=\frac{1}{2}$.
The red in the three slices indicates the slice of the 
non-lifting facet described by  
$4 x_1 + 6 x_2 +7x_3 + z_1 \leq 2$.
We can see that it is not a lifting of a facet of 
$\Po_0$ nor $\Po_1$ because its intersection with either
hyperplane $z_1=0$ and $z_1=1$ is an edge --- hence not a facet 
--- of $\Po_0$ and of $\Po_1$\,.
\hfill $\clubsuit$
\end{example}

\begin{remark}\label{rem:sep1}
It is easy to see that for a point 
$\genfrac(){0pt}{1}{\hat x}{\hat z}\in\mathbb{R}^{d+1}$, we can solve the separation problem for the set of \ref{Falpha} 
(that is, to determine an inequality of the form
\ref{Falpha} that is violated by the given point
or establish that no such inequality exists)
in polynomial time. We consider each 
$\hat{\jmath}\in [d]$, and we set $\hat{s}_i:={\rm sign}(\hat{x}_i)$ 
 if $\hat{x}_i \neq 0$, and arbitrarily to $\pm1$ if $\hat{x}_i = 0$, 
for $i\in [d]$. If the  resulting 
$F(\hat{\jmath},\hat{s})$ is satisfied by 
$\genfrac(){0pt}{1}{\hat x}{\hat z}$, then
all of the \ref{Falpha} are satisfied by 
$\genfrac(){0pt}{1}{\hat x}{\hat z}$. If 
$F(\hat{\jmath},\hat{s})$ is violated by
$\genfrac(){0pt}{1}{\hat x}{\hat z}$, then in fact it is the most violated inequality of the 
type \ref{Falpha}.
\end{remark}

\begin{theorem}\label{thm:facet}
For every $j\in[d]$ and  $s\in \{\pm1 \}^d$, 
\ref{Falpha} describes a facet of $\mathcal{D}$.
\end{theorem}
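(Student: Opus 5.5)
The plan is to verify validity of $F(j,s)$ on all extreme points of $\D$ and then exhibit $d+1$ affinely independent extreme points of $\D$ satisfying it with equality. Since $\D$ is full dimensional by Theorem~\ref{thm:fulldim}, these two steps show that $F(j,s)$ describes a facet. Because $M_0(j)=1$ and $M_1(j)=r_j$, the inequality reads $\sum_i s_i\alpha_i(j)x_i+(1-r_j)z_1\le 1$. By the symmetry $x_i\mapsto -x_i$ (which preserves both generalized cross polytopes, hence $\D$), it suffices to treat $s=\mathbf{e}$; alternatively, one can carry $s$ along and use the extreme points $s_i\frac{1}{\bm{a}^k_i}\mathbf{e}_i$.

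\textbf{Validity.} Validity only needs checking on $\bar{\mathcal{X}}_0\cup\bar{\mathcal{X}}_1$. At $\genfrac(){0pt}{1}{\pm \mathbf{e}_i/\bm{a}^0_i}{0}$, the left-hand side is at most $\alpha_i(j)/\bm{a}^0_i\le M_0(j)=1$. At $\genfrac(){0pt}{1}{\pm \mathbf{e}_i/\bm{a}^1_i}{1}$, it is at most $\alpha_i(j)/\bm{a}^1_i+1-r_j\le M_1(j)+1-r_j=1$. These bounds come directly from the computations of $\alpha_i(j)/\bm{a}^0_i$ and $\alpha_i(j)/\bm{a}^1_i$ preceding Example~\ref{ex:2cross}.

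\textbf{Tight points.} From those same computations, the point $\genfrac(){0pt}{1}{s_i\mathbf{e}_i/\bm{a}^0_i}{0}$ is tight exactly when $i\in\mathcal{L}(j)\cup\mathcal{E}(j)$, and $\genfrac(){0pt}{1}{s_i\mathbf{e}_i/\bm{a}^1_i}{1}$ is tight exactly when $i\in\mathcal{E}(j)\cup\mathcal{G}(j)$. I would take the following collection: for each $i\in\mathcal{L}(j)\cup\mathcal{E}(j)$ the $z_1=0$ point, for each $i\in\mathcal{G}(j)$ the $z_1=1$ point, and in addition the single point $\genfrac(){0pt}{1}{s_j\mathbf{e}_j/\bm{a}^1_j}{1}$. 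This gives $d+1$ points.

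\textbf{Affine independence.} I expect this to be the only step needing care, though it is routine. Subtract the base point $p_0:=\genfrac(){0pt}{1}{s_j\mathbf{e}_j/\bm{a}^0_j}{0}$ from the other $d$ points. The resulting differences are:
\begin{itemize}
\item $s_i\mathbf{e}_i/\bm{a}^0_i-s_j\mathbf{e}_j/\bm{a}^0_j$ with $z$-component $0$, for $i\in(\mathcal{L}(j)\cup\mathcal{E}(j))\setminus\{j\}$;
\item $s_i\mathbf{e}_i/\bm{a}^1_i-s_j\mathbf{e}_j/\bm{a}^0_j$ with $z$-component $1$, for $i\in\mathcal{G}(j)$;
\item $s_j\mathbf{e}_j(1/\bm{a}^1_j-1/\bm{a}^0_j)$ with $z$-component $1$.
\end{itemize}

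To see these $d$ vectors are linearly independent, suppose a linear combination of them vanishes. The coordinates $i\ne j$ appear in exactly one difference each, so their coefficients must be $0$. What remains is the single vector $\genfrac(){0pt}{1}{s_j(1/\bm{a}^1_j-1/\bm{a}^0_j)\mathbf{e}_j}{1}$, which is nonzero because its $z$-component is $1$. Hence all coefficients vanish, the $d+1$ tight points are affinely independent, and $F(j,s)$ describes a facet of $\D$.
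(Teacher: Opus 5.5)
Your proposal is correct and follows essentially the same route as the paper: you check validity on the extreme points of $\bar{\Po}_0$ and $\bar{\Po}_1$, and you use the same $d+1$ tight extreme points (the paper's $\hat{\imath}\in\mathcal{E}(j)$ is your $j$). The only cosmetic difference is in the independence step: you subtract a base point and use a coordinate-support argument on the differences, whereas the paper shows directly that the matrix of the $d+1$ points, including their $z_1$-row, is nonsingular by reducing it to triangular form.
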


\begin{proof} 
First, we demonstrate that \ref{Falpha} is valid for $\mathcal{D}$.
The extreme points of $\bar{\Po}_k$ are $\hat{x}^{k,i} := \left\{ \left( \genfrac{}{}{0pt}{1}{t_i \frac{1}{\bm{a}_i^k} \mathbf{e}_i}{\bm{e_k}}  \right) \right\}$, $i \in [d]$, where $t_i\in\{\pm1\}$, for $k=0,1$.
When $z_1 = 0$, we consider extreme points of $\bar{\Po}_0$ and plug into \ref{Falpha}:
\begin{align*}
&\sum_{k \in [d]} s_k \alpha_k(j) \hat{x}^{0,i}_k + (M_0(j) - M_1(j)) \cdot 0 = s_i t_i \frac{\alpha_i(j)}{\bm{a}^0_i} \leq \frac{\alpha_i(j)}{\bm{a}^0_i} \leq \max_{i \in [d]} \frac{\alpha_i(j)}{\bm{a}^0_i} = M_0(j)
\end{align*}
So, $\sum_{k\in [d]} s_k \alpha_k(j) \hat{x}^{0,i}_k  \leq M_0(j)$ holds for all extreme points of $\bar{\Po}_0$
(and in fact with equality for $i \in \mathcal{L}(j) \cup \mathcal{E}(j)$ and $s_i = t_i$). So \ref{Falpha} 
holds for all of $\bar{\Po}_0$\,. 

Similarly, when $z_1 = 1$, we consider extreme points of $\bar{\Po}_1$ and plug into \ref{Falpha}:
\begin{align*}
& \sum_{k \in [d]} s_k \alpha_k(j) \hat{x}^{1,i}_k + (M_0(j) - M
_1(j)) \cdot 1 = s_i t_i \frac{\alpha_i(j)}{\bm{a}^1_i} + M_0(j) - M_1(j) \\
& \leq  \frac{\alpha_i(j)}{\bm{a}^1_i} - M_1(j) + M_0(j) \leq \max_{i \in [d] }\frac{\alpha_i(j)}{\bm{a}^1_i} - M_1(j) + M_0(j) = M_1(j) - M_1(j) + M_0(j) = M_0(j).
\end{align*}
So $\sum_{k \in [d]} s_k \alpha_k(j) \hat{x}^{1,i}_k + M_0(j) - M_1(j) \leq M_0(j)$ 
hold for all extreme points of $\bar{\Po}_1$ 
(and in fact with equality for $i \in \mathcal{G}(j) \cup \mathcal{E}(j)$ and $s_i = t_i$). So \ref{Falpha} holds for all of $\bar{\Po}_1$\,. 

In summary, the inequalities \ref{Falpha} are valid for $\D$.

\FloatBarrier

Next, we will demonstrate that each of these inequalities
is satisfied by $d+1$ affinely-independent points of $\D$. 
    For all $i \in \mathcal{L}(j)$, we choose $v^0_i := \left( \genfrac{}{}{0pt}{1}{s_i \frac{1}{\bm{a}_i^0} \mathbf{e}_i}{0} \right)$, and plug into \ref{Falpha}:   $
s_i \alpha_i(j) \frac{s_i}{\bm{a}^0_i}+(M_0(j) - M_1(j)) 0 = s_i \bm{a}^0_i \frac{s_i}{\bm{a}^0_i} = 1 = M_0(j); 
$
    so each $v^0_i$\, for $i \in \mathcal{L}(j)$\, satisfies \ref{Falpha} as an equation. For all $i \in \mathcal{G}(j)$, we choose $v^1_i := \left( \genfrac{}{}{0pt}{1}{s_i \frac{1}{\bm{a}_i^1} \mathbf{e}_i}{1} \right)$, and 
    plug into \ref{Falpha}:
 $
s_i \alpha_i(j) \frac{s_i}{\bm{a}^1_i}+(M_0(j) - M_1(j)) = s_i r_j \bm{a}^1_i \frac{s_i}{\bm{a}^1_i} +(M_0(j) - M_1(j)) = r_j - M_1(j) + M_0(j) = M_1(j) - M_1(j) + M_0(j) = M_0(j); 
$
    so each $v^1_i$\, for $i \in \mathcal{G}(j)$ satisfies \ref{Falpha} as an equation. 
    Finally, for all $i \in \mathcal{E}(j)$, we consider the points $\left( \genfrac{}{}{0pt}{1}{s_i \frac{1}{\bm{a}_i^0} \mathbf{e}_i}{0} \right)$, and for some $\hat{\imath} \in \mathcal{E}(j)$ we pick $\left(\genfrac{}{}{0pt}{1}{s_{\hat{\imath}} \frac{1}{\bm{a}_{\hat{\imath}}^1} \mathbf{e}_{\hat{\imath}}}{1} \right)$, and those $|\mathcal{E}(j)| + 1$ points also satisfy \ref{Falpha} as an equation. Because $|\mathcal{L}(j)|+|\mathcal{G}(j)|+|\mathcal{E}(j)| = d$, we have $d+1$ points here. 

It remains to show that those $d+1$ points are affinely independent. 
We arrange these points as columns of a matrix. After appropriately 
permuting rows and columns, we arrive at the matrix 
 \[
\begin{array}{c@{\hspace{0.8em}}c}
&
\begin{array}{@{}cccc@{}}
\B{|\mathcal L(j)|} &
\B{|\mathcal G(j)|} &
\B{|\mathcal E(j)|} &
\makebox[4.8em][c]{\makebox[0pt][c]{\hspace{-0.5em}\ensuremath{1}}}
\end{array}
\\[5pt]
\begin{array}{c}
|\mathcal L(j)| \\[0.28em]
|\mathcal G(j)| \\[0.5em]
|\mathcal E(j)| \\[0.4em]
1
\end{array}
&
\left[
\begin{array}{@{}cccc@{}}
\B{D_{\mathcal L}} &
\B{\mathbf{0}} &
\B{\mathbf{0}} &
\B[4.8em]{\mathbf{0}} \\[0.25em]
\B{\mathbf{0}} &
\B{D_{\mathcal G}} &
\B{\mathbf{0}} &
\B[4.8em]{\mathbf{0}} \\[0.25em]
\B{\mathbf{0}} &
\B{\mathbf{0}} &
\B{D_{\mathcal E}} &
\B[4.8em]{\frac{s_{\hat{\imath}}}{\bm{a}_{\hat{ \imath}}^{1}}\mathbf e_{|\mathcal E(j)|}} \\[0.25em]
\B{\mathbf{0}} &
\B{\mathbf{e}^\top} &
\B{\mathbf{0}} &
\B[4.8em]{1}
\end{array}
\right],
\end{array}
\]
where $D_{\mathcal{L}} := {\rm Diag}\left( \frac{s_{i}}{\bm{a}^0_{i}}: i \in \mathcal{L}(j)\right)$, $D_{\mathcal{G}} := {\rm Diag}\left( \frac{s_{i}}{\bm{a}^1_{i}}: i \in \mathcal{G}(j)\right)$, $D_{\mathcal{E}} := {\rm Diag}\left( \frac{s_{i}}{\bm{a}^0_{i}}: i \in \mathcal{E}(j)\right)$.
Subtracting the last row multiplied
by $\frac{s_{\hat{\imath}}}{a_{\hat{ \imath}}^{1}}$ from the penultimate row, we arrive at
 \[
\begin{array}{c@{\hspace{0.8em}}c}
&
\begin{array}{@{}cccc@{}}
\B{|\mathcal L(j)|} &
\B{|\mathcal G(j)|} &
\B{|\mathcal E(j)|} &
\makebox[4.8em][c]{\makebox[0pt][c]{\hspace{-0.5em}\ensuremath{1}}}
\end{array}
\\[5pt]
\begin{array}{c}
|\mathcal L(j)| \\[0.28em]
|\mathcal G(j)| \\[0.5em]
|\mathcal E(j)| \\[0.4em]
1
\end{array}
&
\left[
\begin{array}{@{}cccc@{}}
\B{D_{\mathcal L}} &
\B{\mathbf{0}} &
\B{\mathbf{0}} &
\B[4.8em]{\mathbf{0}} \\[0.25em]
\B{\mathbf{0}} &
\B{D_{\mathcal G}} &
\B{\mathbf{0}} &
\B[4.8em]{\mathbf{0}} \\[0.25em]
\B{\mathbf{0}} &
\B{-\frac{s_{\hat{\imath}}}{\bm{a}_{\hat{\imath}}^1} \mathbf{e}_{|\mathcal{E}(j)|}\mathbf{e}^\top} &
\B{D_{\mathcal E}} &
\B[4.8em]{\mathbf{0}} \\[0.25em]
\B{\mathbf{0}} &
\B{\mathbf{e}^\top} &
\B{\mathbf{0}} &
\B[4.8em]{1}
\end{array}
\right],
\end{array}
\]
which is a lower triangular matrix with all nonzero entries on the diagonal.
Therefore, its  columns are linearly independent, and so they are also
affinely independent.
\qed
\end{proof}

\begin{theorem}\label{thm:complete}
The inequalities \ref{Falpha} for all $j\in[d]$ and $s\in \{\pm1 \}^d$, 
together with the inequalities $ 0\leq z_1 \leq 1$
give the convex hull $\D$.  
\end{theorem}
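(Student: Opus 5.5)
The inclusion $\D\subseteq Q$ is immediate, where $Q$ denotes the polytope cut out by all \ref{Falpha} together with $0\le z_1\le 1$. Indeed, Theorem \ref{thm:facet} shows every \ref{Falpha} is valid for $\D$, and $0\le z_1\le 1$ is clearly valid. The work is the reverse inclusion. Fix $\genfrac(){0pt}{1}{x}{z_1}\in Q$. I will exhibit $y\in\Po_0$ and $w\in\Po_1$ with $x=(1-z_1)y+z_1w$. This writes the point as a convex combination of a point of $\bar{\Po}_0$ and a point of $\bar{\Po}_1$.

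\emph{The endpoint slices.} First I treat $z_1\in\{0,1\}$ directly.
\begin{itemize}
\item For $j=d$ we have $\mathcal{G}(d)=\varnothing$, so $\alpha_i(d)=\bm{a}^0_i$ for all $i$. Then the family $F(d,s)$ at $z_1=0$ reads $\sum_i \bm{a}^0_i|x_i|\le 1$, which is exactly $x\in\Po_0$.
\item For $j=1$ we have $\mathcal{L}(1)=\varnothing$, so $\alpha_i(1)=r_1\bm{a}^1_i$ for all $i$. Then $F(1,s)$ at $z_1=1$ reads $r_1\sum_i\bm{a}^1_i|x_i|+1-r_1\le 1$. Dividing by $r_1>0$ gives $x\in\Po_1$.
\end{itemize}

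\emph{Reduction to a small LP.} Now let $0<z_1<1$. By the sign symmetry of both cross polytopes and of the family \ref{Falpha}, I may flip coordinate signs and assume $x\ge 0$. I look for a split $x=u+v$ with $u,v\ge 0$, $\sum_i\bm{a}^0_iu_i\le 1-z_1$ and $\sum_i\bm{a}^1_iv_i\le z_1$. Given such a split, $y:=u/(1-z_1)$ and $w:=v/z_1$ do the job. Existence of the split is a linear feasibility question in $v$: the constraints are $0\le v\le x$, $\sum_i \bm{a}^0_i(x_i-v_i)\le 1-z_1$ and $\sum_i \bm{a}^1_iv_i\le z_1$.

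\emph{Farkas step (the main obstacle).} I apply Farkas' lemma and eliminate the bound constraints $0\le v_i\le x_i$ coordinatewise. Feasibility should then be equivalent to
\[
\textstyle\sum_i \min(\lambda_0\bm{a}^0_i,\lambda_1\bm{a}^1_i)\,x_i\le \lambda_0(1-z_1)+\lambda_1z_1
\quad\text{for all } \lambda_0,\lambda_1\ge 0 .
\]
The case $\lambda_0=0$ is trivial. Otherwise I normalize $\lambda_0=1$ and $\lambda_1=t\ge 0$. The gap $g(t):=1-z_1+tz_1-\sum_i\min(\bm{a}^0_i,t\bm{a}^1_i)x_i$ is convex and piecewise linear in $t$, with breakpoints exactly at $t=r_j$. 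Moreover $g(0)=1-z_1\ge 0$, and $g(t)\to\infty$ as $t\to\infty$. So $g$ attains its minimum at $t=0$ or at some breakpoint $t=r_j$. At $t=r_j$ we have $\min(\bm{a}^0_i,r_j\bm{a}^1_i)=\alpha_i(j)$, and $1-z_1+r_jz_1=M_0(j)-(M_0(j)-M_1(j))z_1$. Hence $g(r_j)\ge0$ is precisely \ref{Falpha} with $s=\mathbf{e}$ (after the sign normalization), which holds since the point lies in $Q$. Therefore $g\ge 0$ everywhere, the split exists, and $Q\subseteq\D$.

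The delicate part is carrying out the Farkas/duality computation cleanly, including the elimination of the box constraints. As a sanity check, or as an alternative route, I would also give the greedy construction. It moves mass of $x$ into $v$ in decreasing order of $r_i$ until $\sum_i\bm{a}^1_iv_i=z_1$. One then verifies that the resulting $u$ satisfies the $\Po_0$ budget exactly when the $F(j,s)$ with $j$ at the stopping index holds.
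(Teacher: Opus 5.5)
Your proof is correct, and it takes a genuinely different route from the paper's.

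\textbf{The paper's route.} The paper argues by contradiction using a separating hyperplane $(p,q)$ in $\mathbb{R}^{d+1}$. It computes the support values $h_0=\max_i |p_i|/\bm{a}^0_i$ and $h_1=\max_i |p_i|/\bm{a}^1_i$, and normalizes $q$ to $h_0-h_1$ by a convexity argument. It then bounds $p^\top\bar x\le h_0L(r^*)$, where $r^*=h_1/h_0$ and $L(r)=\sum_i\min\{\bm{a}^0_i,r\bm{a}^1_i\}|\bar x_i|$. Using maximizing indices $i_0,i_1$, it shows $r_{i_0}\le r^*\le r_{i_1}$. Finally, it moves from $r^*$ to an adjacent ratio $r^-$ or $r^+$ at which the concave piecewise-linear $\phi=L-R$ is still positive.

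\textbf{Your route.} You work primally on each slice. You reduce membership to the existence of a split $x=u+v$ satisfying two budget constraints. Farkas' lemma on that small LP then puts the duality in the two-dimensional multiplier space $(\lambda_0,\lambda_1)$. Your ratio $t=\lambda_1/\lambda_0$ plays exactly the role of the paper's $r^*=h_1/h_0$. Once $x$ is replaced by $|x|$, your $g$ is exactly the paper's $-\phi$.

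\textbf{What your route buys.} It is constructive: it produces explicit $y\in\Po_0$ and $w\in\Po_1$, and it makes transparent that the slice of $\D$ at $z_1$ is $(1-z_1)\Po_0+z_1\Po_1$. It avoids the bookkeeping about $p\neq 0$, $h_0>0$ and the reduction to $q=h_0-h_1$. Localizing to a breakpoint comes for free from $g(0)=1-z_1\ge 0$ and the slope $z_1\ge 0$ beyond $r_d$; you do not need the $i_0,i_1$ argument. The cost is a separate treatment of $z_1\in\{0,1\}$, which you handle correctly via $F(d,s)$ and $F(1,s)$. The paper's argument, by contrast, covers all $\bar z_1\in[0,1]$ uniformly.

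\textbf{The step you flagged as delicate.} It goes through. Take multipliers $\lambda_0,\lambda_1\ge 0$ on the two budget constraints and $\nu_i=\max\{0,\lambda_0\bm{a}^0_i-\lambda_1\bm{a}^1_i\}$ on $v_i\le x_i$. Using $x\ge 0$, the certificate value is then exactly $\lambda_0(1-z_1)+\lambda_1 z_1-\sum_i\min\{\lambda_0\bm{a}^0_i,\lambda_1\bm{a}^1_i\}x_i$, as you claimed.

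\textbf{A cosmetic point.} The kinks of $g$ occur only at those $r_j$ with $x_j>0$, not at every $r_j$. Since $g$ is affine between consecutive points of $\{0\}\cup\{r_j : j\in[d]\}$, your conclusion is unaffected.
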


\begin{proof} 
We need only show that if $\genfrac(){0pt}{1}{\bar{x}}{\bar{z}_1}$ satisfies
 $ 0\leq z_1 \leq 1$ and \ref{Falpha} for all  $j\in[d]$ and  $s \in \{\pm1 \}^d$, then $\genfrac(){0pt}{1}{\bar{x}}{\bar{z}_1} \in \mathcal{D}$.
We establish this using the separating-hyperplane theorem. Suppose that
$\genfrac(){0pt}{1}{\bar{x}}{\bar{z}_1}$ satisfies the inequalities but is not in $\D$.
Then there exists $(p,q) \in \mathbb{R}^d \times \mathbb{R}$ such that $p^\top \bar{x} + q \bar{z}_1 > \max  
\left\{p^\top {x} + q {z}_1 ~:~ \genfrac(){0pt}{1}{x}{z} \in D\right\}$. 
Let $h_0 := h_0(p) := \max\{p^\top x ~:~ x\in\Po_0\} = \max \left\{\frac{|p_i|}{\bm{a}^0_i} ~:~ i\in [d]\right\}$, 
$h_1 := h_1(p) := \max\{p^\top x ~:~ x\in \Po_1\}= \max\left\{\frac{|p_i|}{\bm{a}^1_i} ~:~ i\in [d] \right\}$. 
With these definitions, we
have $\max\left\{p^\top {x} + q {z}_1 ~:~ \genfrac(){0pt}{1}{x}{z} \in D\right\}=\max \{h_0, h_1+q \}$.  
Clearly, $h_0, h_1 \geq 0$, and also $|p_i| \leq h_0 \bm{a}^0_i$ and $|p_i| \leq h_1 \bm{a}^1_i$\,, for $i\in[d]$.

Next, we wish to show that without loss of generality, we can take $q=h_0-h_1$\,.
Formally, we 
will check that if there exists $(p,q) \in \mathbb{R}^d \times \mathbb{R}$ such that $p^\top \bar{x} + q \bar{z}_1 > \max  
\left\{p^\top {x} + q {z}_1 ~:~ \genfrac(){0pt}{1}{x}{z} \in D\right\}$,
then there exist $p \in \mathbb{R}^d$ such that
$p^\top \bar{x} + q^* \bar{z}_1 > \max  
\left\{p^\top {x} + q^* {z}_1 ~:~ \genfrac(){0pt}{1}{x}{z} \in D\right\}$,
where $q^* := h_0 - h_1$\,.
We rewrite $p^\top \bar{x} + q \bar{z}_1 > \max \{h_0, h_1 + q\}$ as $p^\top \bar{x} > \max \{h_0, h_1 + q\} - q \bar{z}_1$. Let $g(y):= \max \{h_0  - y \bar{z}_1\,, h_1 + (1 - \bar{z}_1)y \}$. We observe that $h_0  - y \bar{z}_1$ has nonpositive slope $-\bar{z}_1$\,, and $h_1 + (1 - \bar{z}_1)y$ has nonnegative slope $1 - \bar{z}_1$\,. The point-wise maximum of affine functions is convex, so $g(y)$ is convex and minimized at the breakpoint $y = h_0 - h_1$\,.
Finally, we conclude that $p^\top \bar{x} + q^* \bar{z}_1 > \max \{h_0, h_1 + q^* \}$.

When $q^* = h_0 - h_1$\,, 
then $\max \{ h_0, h_1+q^* \} = h_0 = h_1 + q^*$. So we get $p^\top \bar{x} + q^* \bar{z}_1 > h_0 \Rightarrow p^\top \bar{x} + (h_0-h_1) \bar{z}_1 > h_0 \Rightarrow p^\top \bar{x} > (1-\bar{z}_1)h_0 + \bar{z}_1 h_1$\,.

The maximum value of $z_1$ on both $\D$ and feasible region defined by the inequalities is the same, which is $1$. Therefore, $p \neq 0$.
Because $\Po_k$ is centrally symmetric and full dimensional, we can see that $h_k > 0$ for $k=0,1$.

We define $L(r) := \sum_{i\in [d]}\min \{ \bm{a}^0_i, r \bm{a}^1_i \} | \bar{x}_i|$, $R(r) := (1-\bar{z}_1) + \bar{z}_1 r$, and $\phi(r) := L(r) - R(r)$. We note that $R(\cdot)$ is an affine function and $L(\cdot)$ is a concave piecewise-linear function, so $\phi(\cdot)$ is a concave piecewise-linear function.
 We will eventually gain a contradiction by demonstrating that there
 is  a $\hat{\jmath} \in [d]$ such that $\phi(r_{\hat{\jmath}}) > 0$, and 
 so $\genfrac(){0pt}{1}{\bar{x}}{\bar{z}}$ violates $F(\hat{\jmath},s)$ for some $s\in\{\pm1\}^d$.

Toward this end, we define $r^* := h_1/h_0$\,, and 
we will first
 demonstrate that  $\phi(r^*) > 0$.
 Because $|p_i| \leq h_0 \bm{a}^0_i$ and $|p_i| \leq h_1 \bm{a}^1_i = h_0 r^* \bm{a}^1_i$ for all $i \in [d]$, $\sum_{i\in[d]} |p_i| |\bar{x}_i| \leq h_0 \sum_{i\in [d]} \min \{ \bm{a}^0_i, r^* \bm{a}^1_i \} | \bar{x}_i| = h_0 L(r^*)$.  
Then, $h_0 R(r^*) = (1-\bar{z}_1)h_0 + \bar{z}_1h_0 r^* = (1-\bar{z}_1)h_0 + \bar{z}_1 h_1 < p^\top \bar{x} \leq \sum_{i \in [d]} |p_i| |\bar{x}_i| \leq h_0 L(r^*)$, and thus $h_0 \phi(r^*) = h_0 (L(r^*) - R(r^*)) > 0$. Because $h_0> 0, \phi(r^*) > 0$.

Now, we are ready to identify an index $\hat{\jmath} \in [d]$ such that $\phi(r_{\hat{\jmath}}) > 0$ (in case there is no $j \in [d]$ with $r_j = r^*$).
 Choose $i_0 \in \argmax \left\{\frac{|p_i|}{\bm{a}^0_i} ~:~ i\in[d]\right\}$, $i_1 \in \argmax \left\{\frac{|p_i|}{\bm{a}^1_i} ~:~ i\in[d]\right\}$. 
 $h_1 = \frac{|p_{i_1}|}{\bm{a}^1_{i_1}} \leq \frac{h_0 \bm{a}^0_{i_1}}{\bm{a}^1_{i_1}} = h_0 r_{i_1} \Rightarrow r_{i_1} \geq \frac{h_1}{h_0} \Rightarrow r_{i_1} \geq r^*$. Also, because $h_0 = \frac{|p_{i_0}|}{\bm{a}^0_{i_0}} \Rightarrow |p_{i_0}| = h_0 \bm{a}^0_{i_0}$ and $h_1 \geq \frac{|p_{i_0}|}{\bm{a}^1_{i_0}} \Rightarrow |p_{i_0}| \leq h_1 \bm{a}^1_{i_0}$, $h_0 \bm{a}^0_{i_0} \leq h_1 \bm{a}^1_{i_0} \Rightarrow \frac{\bm{a}^0_{i_0}}{\bm{a}^1_{i_0}} \leq \frac{h_1}{h_0} \Rightarrow r_{i_0} \leq r^*$. So we can define $r^- := \max_{i \in [d]} \{r_i: r_i \leq r^*\}$ and $r^+ := \min_{i \in [d]} \{r_i: r_i \geq r^*\}$. 
 When $r_i \leq r$, we have $\min \{ \bm{a}^0_i, r \bm{a}^1_i \} = \bm{a}^0_i$, and 
 when $r_i \geq r$, we have $\min \{ \bm{a}^0_i, r \bm{a}^1_i \} = r \bm{a}^1_i$\,, and because $L(\cdot)$ has no breakpoint between $r^-$ and $r^+$, and therefore
 $L(\cdot)$  and hence  $\phi(\cdot)$ are affine functions on $[r^-,r^+]$. Equivalently, because $h_0 > 0$, the function $h_0\phi(\cdot)$ is also affine function on $[r^-,r^+]$. Therefore, we have $\max \{ h_0 \phi(r^-), h_0 \phi(r^+)\} \geq h_0 \phi(r^*) > 0$. Thus, $h_0 \phi(r^-) > 0$ or $h_0 \phi(r^+) > 0$. Because $h_0 > 0$, we have $\phi(r^-) > 0$ or $\phi(r^+) > 0$. Because $r^-$ and $r^+$ both are values of $r_i$ for some $i \in [d]$, there exists $\hat{\jmath} \in [d]$ such that $\phi(r_{\hat{\jmath}}) > 0$. 

For this $\hat{\jmath}$, we define 
\[
\alpha_i(\hat{\jmath}):= 
\begin{cases}
\bm{a}_i^0\,, & r_i \leq r_{\hat{\jmath}}\,; \\ 
r_{\hat{\jmath}} \bm{a}_{i}^1 \,, & r_i \geq r_{\hat{\jmath}}\,.
\end{cases}
\]
Then, by definition, we have $M_0(\hat{\jmath}) := \max_{ i \in [d]}\alpha_i(\hat{\jmath})/\bm{a}^0_i = 1$\,, $M_1(\hat{\jmath}) := \max_{ i \in [d]}\alpha_i(\hat{\jmath})/\bm{a}^1_i = r_{\hat{\jmath}}$\,.

For $i \in [d]$,
let $\bar{s}_i := {\rm sign}(\bar{x}_i)$ if $\bar{x}_i \neq 0$, 
and choose $\bar{s}_i := \pm 1$ 
arbitrarily if $\bar{x}_i = 0$.
Then $\sum_{i\in[d]}\bar{s}_i\alpha_i(\hat{\jmath})\bar{x}_i = \sum_{i\in[d]} \alpha_i(\hat{\jmath}) |\bar{x}_i| \geq L(r_{\hat{\jmath}})$. $ R(r_{\hat{\jmath}}) = (1-\bar{z}_1) + \bar{z}_1 r_{\hat{\jmath}}$\,. Because $\phi(r_{\hat{\jmath}}) > 0, L(r_{\hat{\jmath}})-R(r_{\hat{\jmath}})>0 \Rightarrow \sum_{i \in [d]} \bar{s}_i\alpha_i(\hat{\jmath})\bar{x}_i > (1-\bar{z}_1) + \bar{z}_1 r_{\hat{\jmath}}$\,. Using $M_0(\hat{\jmath}) = 1, M_1(\hat{\jmath}) = r_{\hat{\jmath}}$\,, finally we have $\sum_{i \in [d]} \bar{s}_i\alpha_i(\hat{\jmath})x_i + (M_0(\hat{\jmath}) - M_1(\hat{\jmath}))\bar{z}_1 > M_0(\hat{\jmath})$. But then $\genfrac(){0pt}{1}{\bar{x}}{\bar{z}}$ violates $F(\hat{\jmath},\bar{s})$, a contradiction. 
\qed
\end{proof}

\begin{theorem}\label{thm:ratiosettheorem}
    We define the \emph{ratio set} $R := \{r_i: i \in [d]\}$. 
    All vertical facets of $\mathcal{D}$ are described by optimal big-M lifting inequalities if and only if $|R| \leq 2$.
\end{theorem}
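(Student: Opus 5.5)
By Theorems \ref{thm:facet} and \ref{thm:complete}, the vertical facets of $\D$ are exactly the inequalities \ref{Falpha}, for $j\in[d]$ and $s\in\{\pm1\}^d$. These are pairwise distinct up to positive scaling for distinct values of $r_j$ (or distinct $s$). The claim therefore reduces to deciding, for each $j$, whether $F(j,s)$ is a positive multiple of an optimal big-M lifting of a facet of $\Po_0$ or of $\Po_1$. The facets of $\Po_k$ are $\sum_i s_i\bm{a}^k_i x_i\le 1$. Their optimal big-M liftings are:
\begin{itemize}
\item from $\Po_0$: $\sum_i s_i \bm{a}^0_i x_i + (1-M_1)z_1 \le 1$, with $M_1 = \min_{x\in\Po_1}\bigl(1-\sum_i s_i\bm{a}^0_ix_i\bigr) = 1-\max_i r_i = 1-r_d$;
\item from $\Po_1$: the analogous inequality with $x$-coefficients $s_i\bm{a}^1_i$.
\end{itemize}
Since the $x$-coefficients of $F(j,s)$ are $s_i\alpha_i(j)$, and the right-hand side plus $z_1$ coefficient are then forced by optimality (indeed by $M_0(j)=1$ and $M_1(j)=r_j$), $F(j,s)$ is a lifting inequality if and only if the vector $(\alpha_i(j))_i$ is proportional to $\bm{a}^0$ or to $\bm{a}^1$.

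Now $\alpha_i(j)=\bm{a}^0_i$ for $r_i\le r_j$ and $\alpha_i(j)=r_j\bm{a}^1_i$ for $r_i\ge r_j$. I would make the proportionality test concrete as follows.
\begin{itemize}
\item Suppose $\alpha(j)=c\,\bm{a}^0$. Then $c=1$ (look at any $i\in\mathcal{E}(j)$), and for $i\in\mathcal{G}(j)$ we would need $r_j\bm{a}^1_i=\bm{a}^0_i$, i.e.\ $r_i=r_j$. That is impossible, so $\alpha(j)\propto\bm{a}^0$ if and only if $\mathcal{G}(j)=\varnothing$, i.e.\ $r_j=r_d=\max R$.
\item Symmetrically, $\alpha(j)\propto\bm{a}^1$ (with constant $r_j$) if and only if $\mathcal{L}(j)=\varnothing$, i.e.\ $r_j=r_1=\min R$.
\end{itemize}
Hence $F(j,s)$ is a lifting inequality if and only if $r_j\in\{\min R,\max R\}$.

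Both directions then follow.
\begin{itemize}
\item If $|R|\le 2$, every $r_j$ is the minimum or the maximum of $R$, so all vertical facets are liftings.
\item If $|R|\ge 3$, pick $j$ with $\min R<r_j<\max R$. Then $\mathcal{L}(j)$ and $\mathcal{G}(j)$ are both nonempty, and $F(j,s)$ is a facet by Theorem \ref{thm:facet} that is not a positive multiple of any lifting. Example \ref{ex:2cross} with $j=2$ illustrates this case.
\end{itemize}

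The main obstacle is rigor on ``not a lifting'': a facet may have many scalings, and a priori a lifting of some other valid (non-facet) inequality could coincide with it. I would handle this by restricting to liftings of facet-describing inequalities of $\Po_0$ or $\Po_1$, as in the paper's usage. Two facet-describing inequalities of the full-dimensional $\D$ describe the same facet only if they are positive multiples of each other. Comparing the $x$-parts modulo scaling then reduces everything to the proportionality test above, with the sign vector $s$ matched coordinatewise because every $\alpha_i(j)>0$.
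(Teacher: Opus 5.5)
Your proof is correct, but it takes a different route from the paper's.

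\textbf{The paper's route.} The paper counts vertices on the face. For a fixed $s$, the extreme points of $\bar{\Po}_0$ that are tight at $F(j,s)$ are indexed by $\mathcal{L}(j)\cup\mathcal{E}(j)$, and those of $\bar{\Po}_1$ by $\mathcal{E}(j)\cup\mathcal{G}(j)$. So when $\min R<r_j<\max R$, neither the slice $z_1=0$ nor the slice $z_1=1$ of the facet contains $d$ affinely independent points, and the facet cannot be the lifting of a facet of $\Po_0$ or $\Po_1$. Conversely, for $|R|=2$ the paper exhibits $d$ affinely independent tight points in one of $\bar{\Po}_0,\bar{\Po}_1$, and for $|R|=1$ it appeals to Theorem~\ref{thm:hull_shape}.

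\textbf{Your route.} You compare coefficient vectors, showing $\alpha(j)\propto\bm{a}^0$ iff $\mathcal{G}(j)=\varnothing$ and $\alpha(j)\propto\bm{a}^1$ iff $\mathcal{L}(j)=\varnothing$. This is the dual form of the same fact.

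\textbf{What each buys.} Your version checks the $z_1$ coefficient explicitly in the ``if'' direction. The paper leaves this implicit: it relies on the fact that a facet of $\D$ whose slice is a facet of $\Po_k$ must be the optimal lifting of that facet. Your version also treats $|R|=1$ uniformly without Theorem~\ref{thm:hull_shape}, and it yields the paper's subsequent Observation directly. The paper's vertex counting, on the other hand, avoids computing any lifting coefficients, and it transfers with little change to general $n$ in Appendix~\ref{app:lift}.

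\textbf{A slip to fix.} Your displayed lifting from $\Po_0$ has the wrong $z_1$ coefficient. With $M_1=1-r_d$ as you define it, the optimal lifting is $\sum_i s_i\bm{a}^0_i x_i + M_1 z_1\le 1$, not $(1-M_1)z_1$. As written, your formula gives coefficient $r_d$, which would not match the coefficient $1-r_j=1-r_d$ in $F(j,s)$.

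\textbf{The subtlety you flag is real, not merely a priori.} Every $F(j,s)$ is literally the optimal big-M lifting of the valid inequality $\sum_i s_i\alpha_i(j)x_i\le 1$ for $\Po_0$, because $1-\max_{x\in\Po_1}\sum_i s_i\alpha_i(j)x_i=1-M_1(j)=1-r_j$. Likewise it is the lifting of $\sum_i s_i\alpha_i(j)x_i\le r_j$ for $\Po_1$. So restricting to liftings of facet-describing inequalities of $\Po_0$ or $\Po_1$ is necessary for the theorem to hold, not just a convenience. It is also the reading the paper's proof uses.
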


\begin{proof} 
Let $r_{\min} := \min\{r_i: i \in [d]\}$, $r_{\max} := \max\{r_i: i \in [d]\}$. Also, let $S^0(r_{{j}}):= \{ i \in [d]: r_i \leq r_{{j}}\}$
and $S^1(r_{{j}}):= \{ i \in [d]: r_i \geq r_{{j}} \}$,
for all $j \in [d]$. 

$(\Rightarrow):$ 
Suppose that $|R| \geq 3$. Choose $r_{\hat{\jmath}} \in R$ such that $r_{\min} < r_{\hat{\jmath}} < r_{\max}$\,. 
Note that 
$|S^0(r_{\hat{\jmath}})| \leq d-1$ and $|S^1(r_{\hat{\jmath}})| \leq d-1$. 
This implies that any set of $d+1$ extreme
points of $\D$ satisfying the inequality as 
an equation cannot have $d$ points from
either of $\bar{\Po}_0$ or $\bar{\Po}_1$\,.
Therefore, the inequality cannot be a lifting of a facet-describing inequality of $\bar{\Po}_0$ or $\bar{\Po}_1$\,.

$(\Leftarrow):$ Suppose that $|R| = 1$, say $r_i = r$ for all $i\in[d]$. Then, $S^0(r_i) = S^1(r_i) = [d], \mbox{ for all } i \in [d]$. We can choose $d$ affinely-independent points from $\Po_0$ and one extreme point from $\Po_1$\,. Similarly, we can choose $d$ affinely-independent points from $\Po_1$ and one extreme point from $\Po_0$\,.
And in both cases, every such facet is described by an optimal big-M lifting inequality. 
In fact, it is easy to see that in this case ($|R|=1$) that Theorem \ref{thm:hull_shape} applies. 

Suppose that $|R| = 2$, that is $R = \{r_{\min}, r_{\max}\}$ with $r_{\min} < r_{\max}$\,. Then, we have two cases:
\begin{enumerate}[wide, labelwidth=0pt, labelindent=0pt]
    \item $S^0(r_{\min}) = \{i: r_i = r_{\min} \} \neq \varnothing$, and $S^1(r_{\min}) = \{i: r_i \geq r_{\min} \} = [d]$. Then, we choose $d$ affinely-independent points $\left( \genfrac{}{}{0pt}{1}{ \frac{s_i}{\bm{a}_i^1} \mathbf{e}_i}{1} \right)$ for $i \in S^1(r_{\min})$, and one more point $\left( \genfrac{}{}{0pt}{1}{ \frac{s_i}{\bm{a}_i^0} \mathbf{e}_i}{0} \right)$ for $i \in S^0(r_{\min})$. So \ref{Falpha} with $j \in S^0(r_{\min}) = \{i: r_i =r_{\min}\}$
    is an optimal big-M lifting inequality starting from $\Po_{k_2}$\,. 
    \item $S^0(r_{\max}) = \{i: r_i \leq r_{\max} \} = [d]$, and $S^1(r_{\max}) = \{i: r_i = r_{\max} \} \neq \varnothing$. Then, we choose $d$ affinely-independent points $\left( \genfrac{}{}{0pt}{1}{ \frac{s_i}{\bm{a}_i^0} \mathbf{e}_i}{0} \right)$ for $i \in S^0(r_{\max})$, and one more point $\left( \genfrac{}{}{0pt}{1}{ \frac{s_i}{\bm{a}_i^1} \mathbf{e}_i}{1} \right)$ for $i \in S^1(r_{\max})$. So \ref{Falpha} with $j \in S^1(r_{\max}) = \{i: r_i =r_{\max}\}$  is an optimal big-M lifting inequality starting from $\Po_{k_1}$\,. \qed 
\end{enumerate}  
\end{proof}
Considering the proof of Theorem
\ref{thm:ratiosettheorem}, we can easily see the following.
\begin{observation}
 $F(\hat{\jmath},s)$ is an optimal big-M lifting inequality starting from a facet-describing inequality of $\Po_0$ 
 (resp., $\Po_1$)
if and only if $r_{\hat{\jmath}} =r_{\max}$
 (resp., $r_{\hat{\jmath}} =r_{\min}$).
\end{observation}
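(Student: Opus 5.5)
The Observation has two directions. I would read off one from the tight extreme points and the other from the coefficient formulas. The basic tool is the tight-set computation in the proof of Theorem~\ref{thm:facet}. An extreme point $\genfrac(){0pt}{1}{t_i\mathbf{e}_i/\bm{a}^0_i}{0}$ of $\bar{\Po}_0$ satisfies $F(\hat{\jmath},s)$ with equality exactly when $t_i=s_i$ and $i\in S^0(r_{\hat{\jmath}})=\mathcal{L}(\hat{\jmath})\cup\mathcal{E}(\hat{\jmath})$. An extreme point of $\bar{\Po}_1$ is tight exactly when $t_i=s_i$ and $i\in S^1(r_{\hat{\jmath}})$.

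\emph{($\Leftarrow$, case $\Po_0$).} Suppose $r_{\hat{\jmath}}=r_{\max}$. Then $\mathcal{G}(\hat{\jmath})=\varnothing$, so $\alpha_i(\hat{\jmath})=\bm{a}^0_i$ for all $i$. Also $M_0(\hat{\jmath})=1$ and $M_1(\hat{\jmath})=r_{\max}$. Hence $F(\hat{\jmath},s)$ reads $\sum_i s_i\bm{a}^0_i x_i+(1-r_{\max})z_1\le 1$. I would check that this is the optimal big-M lifting of the facet $\sum_i s_i\bm{a}^0_ix_i\le 1$ of $\Po_0$. That requires $M_1=\min\{1-\sum_i s_i\bm{a}^0_ix_i : x\in\Po_1\}=1-\max_i \bm{a}^0_i/\bm{a}^1_i=1-r_{\max}$, which is immediate by evaluating at the extreme points of $\Po_1$.

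\emph{($\Leftarrow$, case $\Po_1$).} Symmetrically, suppose $r_{\hat{\jmath}}=r_{\min}$. Then $\mathcal{L}(\hat{\jmath})=\varnothing$ and $\alpha_i=r_{\min}\bm{a}^1_i$. Dividing by $r_{\min}$ gives $\sum_i s_i\bm{a}^1_ix_i+(1/r_{\min}-1)z_1\le 1/r_{\min}$. This matches the lifting formula for $k=1$: take $\beta=1$ and $M_0=1-\max_i\bm{a}^1_i/\bm{a}^0_i=1-1/r_{\min}$, and the lifted inequality becomes $\sum_i s_i\bm{a}^1_ix_i-M_0z_1\le 1-M_0$.

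\emph{($\Rightarrow$).} I would argue as in the ($\Rightarrow$) part of Theorem~\ref{thm:ratiosettheorem}. Suppose $F(\hat{\jmath},s)$ is an optimal big-M lifting of a facet-describing inequality of $\Po_0$. Its restriction to $z_1=0$ must then describe a facet of $\Po_0$, so $d$ affinely independent tight points must lie in $\bar{\Po}_0$. The tight extreme points of $\bar{\Po}_0$ are only $\{s_i\mathbf{e}_i/\bm{a}^0_i: i\in S^0(r_{\hat{\jmath}})\}$, and these are linearly independent, so they number $|S^0(r_{\hat{\jmath}})|$. Hence $|S^0(r_{\hat{\jmath}})|=d$, i.e.\ $r_{\hat{\jmath}}\ge r_i$ for all $i$, i.e.\ $r_{\hat{\jmath}}=r_{\max}$. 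The same argument with $S^1$ gives $r_{\hat{\jmath}}=r_{\min}$ for liftings from $\Po_1$.

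The main subtlety is the degenerate case $|R|=1$, where $r_{\min}=r_{\max}$ and the inequality is a lifting from both polytopes. In that case both claims hold simultaneously, which is consistent with the statement, so no separate treatment is needed. I would only remark on it. The one step that needs care is justifying that a lifting's trace on $z_1=k$ must be a facet of $\Po_k$: this holds because the paper's liftings start from facet-describing inequalities, and the face at $z_1=k$ is exactly that facet embedded in $\bar{\Po}_k$.
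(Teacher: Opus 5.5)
Your proof is correct and follows the paper's route. The ($\Rightarrow$) direction is exactly the tight-extreme-point count from the proof of Theorem~\ref{thm:ratiosettheorem}: a lifting from $\Po_0$ (resp.\ $\Po_1$) forces $|S^0(r_{\hat{\jmath}})|=d$ (resp.\ $|S^1(r_{\hat{\jmath}})|=d$). For ($\Leftarrow$), the paper exhibits $d$ affinely independent tight points on one slice, whereas you directly check that the $z_1$-coefficient equals the optimal big-M value ($1-r_{\max}$, resp.\ $1-1/r_{\min}$ after scaling); this makes explicit a step the paper leaves implicit, namely that the coefficient is the \emph{optimal} one.
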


Considering again Example \ref{ex:2cross}
in light of Theorem \ref{thm:ratiosettheorem},
we have $|R|=3$, and this
is why we see non-lifting inequalities in the complete description.

In the following example also with $d=3$, we have $|R|=2$,
and we only see lifting inequalities. 
We note that this example, where all vertical facets are described by optimal big-M lifting inequalities
is not covered by
Theorem \ref{thm:hull_low} nor
Theorem \ref{thm:hull_shape}, demonstrates that Theorem 
\ref{thm:ratiosettheorem} really covers new cases. 

\begin{example}
Let 
\[
\Po_0 := \{ x \in \mathbb{R}^3 ~:~  \pm 2 x_1 \pm 3 x_2 \pm 4 x_3 \leq 1
\},
\]
and let 
\[
\Po_1 := \{ x \in \mathbb{R}^3 ~:~  \pm 5 x_1 \pm 6 x_2 \pm 8 x_3 \leq 1
\}.
\]

\begin{itemize}[leftmargin=10.5mm]
\item[$j=1$:] $r_1 = \frac{2}{5},~ \mathcal{L}(1) = \varnothing,~ \mathcal{E}(1) = \{1\},~ \mathcal{G}(1) = \{2,3\}$. \\[2pt]
$M_0(1) = 1$, $M_1(1) = r_1= 1 \cdot \frac{2}{5} = \frac{2}{5}$.\\[2pt]
$\alpha_1(1) = \bm{a}^0_1 = 1 \cdot 2 = 2 = \frac{2}{5} \cdot 5 = r_1 \bm{a}^1_1$\,,\\[2pt]
$\alpha_2(1) = r_1 \bm{a}^1_2 = \frac{2}{5} \cdot 6 = \frac{12}{5}$,\\[2pt]
$\alpha_3(1) = r_1 \bm{a}^1_3 = \frac{2}{5} \cdot 8 = \frac{16}{5}$.\\[2pt]
Which (scaling by $\frac{5}{2}$) yields~   $\pm 5 x_1 \pm 6 x_2 \pm 8 x_3 + \frac{3}{2} z_1 \leq \frac{5}{2}$.
\item[$j=2$:] $r_2 = \frac{1}{2},~ \mathcal{L}(2) = \{1\},~ \mathcal{E}(2) = \{2, 3\},~ \mathcal{G}(2) = \varnothing$. \\[2pt]
$M_0(2) = 1$, $M_1(2) = r_2 = 1 \cdot \frac{3}{6} = \frac{1}{2}$.\\[2pt]
$\alpha_1(2) = \bm{a}^0_1 = 1 \cdot 2 = 2$,\\[2pt]
$\alpha_2(2) = \bm{a}^0_2 = 1 \cdot 3 = 3 = \frac{1}{2} \cdot 6 = r_2 \bm{a}^1_2$\,,\\[2pt]
$\alpha_3(2) = \bm{a}^0_3 = 1 \cdot 4 = 4 = \frac{1}{2} \cdot 8 = r_2 \bm{a}^1_3$\,.\\[2pt]
Which yields~   $\pm 2 x_1 \pm 3 x_2 \pm 4 x_3 + \frac{1}{2} z_1 \leq 1$.
\item[$j=3$:]  $r_3 = \frac{1}{2},~ \mathcal{L}(3) = \{1\},~ \mathcal{E}(3) = \{2,3\},~ \mathcal{G}(3) = \varnothing$.\\[2pt]
$M_0(3) = 1$, $M_1(3) = r_3 = 1 \cdot \frac{4}{8} = \frac{1}{2}$.\\[2pt]
$\alpha_1(3) = \bm{a}^0_1 = 1 \cdot 2 = 2$,\\[2pt]
$\alpha_2(3) = \bm{a}^0_2 = 1 \cdot 3 = 3 = \frac{1}{2} \cdot 6 = r_3 \bm{a}^1_2$\,,\\[2pt]
$\alpha_3(3) = \bm{a}^0_3 = 1 \cdot 4 = 4 = \frac{1}{2} \cdot 8 = r_3 \bm{a}^1_3$\,.\\[2pt]
Which yields~   $\pm 2 x_1 \pm 3 x_2 \pm 4 x_3 + \frac{1}{2} z_1 \leq 1$.
\end{itemize}
In all  cases, we can see that the inequalities
produced are lifting inequalities. \hfill $\clubsuit$
\end{example}

%%%%%%%%%%%%%%%%%%%

\subsection{\texorpdfstring{General $n\geq 1$}{General n>=1}}\label{sec:generaln}

For more than two generalized cross polytopes, we need to generalize the
inequalities \ref{Falpha}.
Consider $k_1,k_2 \in N_0$ and $k_1 < k_2$\,. For each $j \in [d]$, we define $r^{k_1,k_2}_j := \bm{a}^{k_1}_j /\bm{a}^{k_2}_j$\,,
we partition $[d]$, depending on the $r^{k_1,k_2}_j$\,, into the blocks:
\begin{align*}
 \mathcal{L}^{k_1,k_2}(j) &:= \left\{i \in [d]: r^{k_1,k_2}_i < r^{k_1,k_2}_j\right\}, \\
 \mathcal{E}^{k_1,k_2}(j) &:= \left\{i \in [d]: r^{k_1,k_2}_i = r^{k_1,k_2}_j\right\}, \\
 \mathcal{G}^{k_1,k_2}(j) &:= \left\{i \in [d]: r^{k_1,k_2}_i > r^{k_1,k_2}_j\right\}. 
\end{align*}
 Next, for each $i\in[d]$, we define 
\[
\alpha^{k_1,k_2}_i(j):= 
\begin{cases}
\bm{a}_i^{k_1}=  r^{k_1,k_2}_j \bm{a}_i^{k_2}\,, & \mbox{for } i \in \mathcal{E}^{k_1,k_2}(j); \\ 
 \bm{a}_i^{k_1}\,, & \mbox{for } i \in \mathcal{L}^{k_1,k_2}(j); \\ 
 r^{k_1,k_2}_j \bm{a}_i^{k_2}\,, & \mbox{for } i \in \mathcal{G}^{k_1,k_2}(j).
\end{cases}
\]
Equivalently, $\alpha^{k_1,k_2}_i(j):= \min \{ \bm{a}_i^{k_1},  r^{k_1,k_2}_j \bm{a}_i^{k_2}\}$. Finally, for each $k \in N_0$, we define $M^{k_1,k_2}_k(j) := \max_{ i \in [d]}\alpha^{k_1,k_2}_i(j)/\bm{a}^k_i $\,.

Now,
$k_1,k_2 \in N_0$\,, with $k_1 < k_2$\,,  $j\in[d]$, and  $s\in \{\pm1 \}^d$, 
we define 
\begin{equation}\tag{$F(k_1,k_2,j,s)$}\label{Fkalpha}
\sum_{i\in [d]} s_i \alpha^{k_1,k_2}_i(j) x_i + \sum_{k\in N} \left(M^{k_1,k_2}_0(j) - M^{k_1,k_2}_k(j) \right) z_k \leq M^{k_1,k_2}_0(j).
\end{equation}
We can also view these inequalities in the more compact nonlinear form
\begin{equation}\tag{$\bar{F}(k_1,k_2,j)$}\label{Fkalpha_bar}
\sum_{i\in [d]}  \alpha^{k_1,k_2}_i(j) |x_i| + \sum_{k\in N} \left(M^{k_1,k_2}_0(j) - M^{k_1,k_2}_k(j) \right) z_k \leq M^{k_1,k_2}_0(j),
\end{equation}
for all $j\in[d]$. 

\begin{observation}
For $n:=1$, we have uniquely $k_1=0$ and $k_2=1$, and then 
    \ref{Fkalpha} (resp., \ref{Fkalpha_bar})
    reduces to \ref{Falpha} (resp., \ref{Falpha_bar}).
\end{observation}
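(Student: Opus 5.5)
The plan is to verify the reduction by direct substitution: when $n=1$, every ingredient of \ref{Fkalpha} coincides with the corresponding ingredient of \ref{Falpha}. First, for $n=1$ we have $N_0=\{0,1\}$. The only pair $k_1<k_2$ in $N_0$ is therefore $(k_1,k_2)=(0,1)$, which gives the uniqueness claim.

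Second, with $(k_1,k_2)=(0,1)$ we get $r^{0,1}_j=\bm{a}^0_j/\bm{a}^1_j=r_j$ for every $j\in[d]$. Consequently the blocks agree: $\mathcal{L}^{0,1}(j)=\mathcal{L}(j)$, $\mathcal{E}^{0,1}(j)=\mathcal{E}(j)$ and $\mathcal{G}^{0,1}(j)=\mathcal{G}(j)$. Comparing the case definitions then gives $\alpha^{0,1}_i(j)=\alpha_i(j)$ for all $i,j\in[d]$. Alternatively, one can use the form $\alpha^{0,1}_i(j)=\min\{\bm{a}^0_i,r_j\bm{a}^1_i\}$ and check that this minimum picks out exactly the case structure of $\alpha_i(j)$, since $r_i\le r_j$ if and only if $\bm{a}^0_i\le r_j\bm{a}^1_i$.

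Third, the definitions of $M^{0,1}_k(j)=\max_{i\in[d]}\alpha^{0,1}_i(j)/\bm{a}^k_i$ for $k=0,1$ are then literally those of $M_0(j)$ and $M_1(j)$. By the computation preceding \ref{Falpha}, these equal $1$ and $r_j$. Finally, with $N=\{1\}$ the sum $\sum_{k\in N}(M^{0,1}_0(j)-M^{0,1}_k(j))z_k$ collapses to the single term $(M_0(j)-M_1(j))z_1$, and the right-hand side is $M_0(j)$. Hence \ref{Fkalpha} is identical to \ref{Falpha} for every $j$ and $s$.

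The nonlinear form follows the same way: \ref{Fkalpha_bar} becomes \ref{Falpha_bar} after the identical substitutions. The only step needing any care, and it is minor, is the block and $\min$ identification in the second step; everything else is immediate from the definitions.
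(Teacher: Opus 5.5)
Your proposal is correct and follows the same route as the paper, which states this observation without proof because it follows by substituting $(k_1,k_2)=(0,1)$: this gives $r^{0,1}_j=r_j$, identical blocks, $\alpha^{0,1}_i(j)=\alpha_i(j)$, $M^{0,1}_k(j)=M_k(j)$ for $k\in\{0,1\}$, and a single $z_1$ term. Your check that $r_i\le r_j$ if and only if $\bm{a}^0_i\le r_j\bm{a}^1_i$ (using $\bm{a}^1_i>0$) correctly covers the only point that needs any care.
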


Next, we generalize Remark \ref{rem:sep1}, Theorem \ref{thm:facet}, and Theorem \ref{thm:ratiosettheorem} to the case of $n>1$. 

\begin{remark}\label{rem:sep_gen}
It is easy to see that for a point 
$\genfrac(){0pt}{1}{\hat x}{\hat z}\in\mathbb{R}^{d+n}$, we can solve the separation problem for the set of \ref{Fkalpha} 
in polynomial time. For each pair $k_1,k_2 \in N_0$ with $k_1 < k_2$, and for each $\hat{\jmath} \in [d]$, and we set $\hat{s}_i:={\rm sign}(\hat{x}_i)$ 
 if $\hat{x}_i \neq 0$, and arbitrarily to $\pm1$ if $\hat{x}_i = 0$, 
for $i\in [d]$. If the  resulting 
$F(k_1, k_2, \hat{\jmath},\hat{s})$ is satisfied by 
$\genfrac(){0pt}{1}{\hat x}{\hat z}$, then
all of the \ref{Fkalpha} are satisfied by 
$\genfrac(){0pt}{1}{\hat x}{\hat z}$. If 
$F(k_1, k_2, \hat{\jmath},\hat{s})$ is violated by
$\genfrac(){0pt}{1}{\hat x}{\hat z}$, then in fact it is the most violated inequality of the 
type \ref{Fkalpha}. Therefore, by checking  
at most $\binom{n+1}{2}d$ choices (note that 
for some $j$, the $r^{k_1,k_2}_j$ need not all be distinct),
we can find the most violated inequality or establish that there
are none violated in polynomial time.
\end{remark}

\begin{theorem}\label{thm:facets_general_n}
 (proof in Appendix \ref{app:facet})
For every  $k_1,k_2 \in N_0$\,, with $k_1 < k_2$\,,  $j\in[d]$, and  $s\in \{\pm1 \}^d$, 
\ref{Fkalpha} describes a facet of $\mathcal{D}$.
\end{theorem}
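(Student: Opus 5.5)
The plan is to follow the proof of Theorem~\ref{thm:facet}: first prove validity vertex by vertex, then exhibit $d+n$ affinely independent points of $\D$ that satisfy \ref{Fkalpha} with equality. By Theorem~\ref{thm:fulldim}, $\D$ is full dimensional in $\mathbb{R}^{d+n}$, so this many tight points is exactly what a facet requires. Throughout I fix $k_1<k_2$, $j$ and $s$, and abbreviate $\alpha_i:=\alpha^{k_1,k_2}_i(j)$, $M_k:=M^{k_1,k_2}_k(j)$ and $r_i:=r^{k_1,k_2}_i$.

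\textbf{Validity.} The extreme points of $\bar{\Po}_k$ are $\genfrac(){0pt}{1}{t_i \mathbf{e}_i/\bm{a}^k_i}{\mathbf{e}_k}$ with $t_i\in\{\pm1\}$. Recall $\mathbf{e}_0=0$, so the $z$-term vanishes when $k=0$. Plugging such a point into \ref{Fkalpha} gives
\[
s_it_i\alpha_i/\bm{a}^k_i + M_0 - M_k \;\le\; M_k + M_0 - M_k \;=\; M_0 ,
\]
using the definition of $M_k$ as a maximum. Equality holds exactly when $t_i=s_i$ and $i$ attains $\max_{i}\alpha_i/\bm{a}^k_i$. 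So the inequality is valid on every $\bar{\Po}_k$, hence on $\D$.

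\textbf{Two key computations.} Since $\alpha_i=\min\{\bm{a}^{k_1}_i, r_j\bm{a}^{k_2}_i\}$, repeating the computation made for $n=1$ gives
\[
\alpha_i/\bm{a}^{k_1}_i=\min\{1,r_j/r_i\}, \qquad \alpha_i/\bm{a}^{k_2}_i=\min\{r_i,r_j\}.
\]
Therefore $M_{k_1}=1$, attained by $i\in\mathcal{L}\cup\mathcal{E}$, and $M_{k_2}=r_j$, attained by $i\in\mathcal{G}\cup\mathcal{E}$. For every other $k\in N_0\setminus\{k_1,k_2\}$, choose an index $i_k\in\argmax_i \alpha_i/\bm{a}^k_i$; then the point $w^k:=\genfrac(){0pt}{1}{s_{i_k}\mathbf{e}_{i_k}/\bm{a}^k_{i_k}}{\mathbf{e}_k}$ is tight.

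\textbf{Tight points.} From $\bar{\Po}_{k_1}\cup\bar{\Po}_{k_2}$ I take exactly the $d+1$ points used in Theorem~\ref{thm:facet}:
\begin{itemize}
\item the points $v^{k_1}_i$ for $i\in\mathcal{L}\cup\mathcal{E}$;
\item the points $v^{k_2}_i$ for $i\in\mathcal{G}$;
\item one further point $v^{k_2}_{\hat\imath}$ with $\hat\imath\in\mathcal{E}$.
\end{itemize}
Adding the $n-1$ points $w^k$ gives $d+n$ tight points in total.

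\textbf{Affine independence.} This bookkeeping is the main obstacle. I take $v:=v^{k_1}_{j}$ as base point and consider the differences of the other points from $v$.
\begin{itemize}
\item Differences among the $d+1$ points from $\bar{\Po}_{k_1}\cup\bar{\Po}_{k_2}$ have $z$-part in $\mathrm{span}\{\mathbf{e}_{k_2}-\mathbf{e}_{k_1}\}$.
\item Each $w^k-v$ has $z$-part $\mathbf{e}_k-\mathbf{e}_{k_1}$.
\end{itemize}
When $k_1=0$, these vectors are just $\mathbf{e}_k$ and $\mathbf{e}_{k_2}$; when $k_1>0$, they are still linearly independent together with $\mathbf{e}_{k_2}-\mathbf{e}_{k_1}$. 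In either case the $z$-parts of the $w^k-v$ are independent modulo $\mathrm{span}\{\mathbf{e}_{k_2}-\mathbf{e}_{k_1}\}$. A block-triangular argument then reduces affine independence to that of the first $d+1$ points inside the affine plane $\{z\in\mathrm{aff}(\mathbf{e}_{k_1},\mathbf{e}_{k_2})\}$.

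That plane is affinely parametrized by $(x,\lambda)$ via $z=(1-\lambda)\mathbf{e}_{k_1}+\lambda\mathbf{e}_{k_2}$. Under this parametrization the $d+1$ points become exactly the $n=1$ configuration with $\Po_0:=\Po_{k_1}$ and $\Po_1:=\Po_{k_2}$. The lower-triangular matrix computation in the proof of Theorem~\ref{thm:facet} then shows they are affinely independent. Hence all $d+n$ points are affinely independent, and \ref{Fkalpha} describes a facet of $\D$.
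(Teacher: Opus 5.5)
Your proof is correct. Validity and the $d+n$ tight points are the same as in the paper: $\hat x^{k_1,i}$ for $i\in\mathcal{L}\cup\mathcal{E}$, $\hat x^{k_2,i}$ for $i\in\mathcal{G}$, one extra point $\hat\imath\in\mathcal{E}$ from $\bar\Po_{k_2}$, and one maximizing point from each remaining $\bar\Po_k$. The difference is in how you certify affine independence. The paper places the points as columns of a $(d+n)\times(d+n)$ matrix and clears the $x$-parts of the last $n$ columns using the diagonal blocks, giving a Schur complement $H=G-FA^{-1}B$. It then proves $H$ nonsingular by Laplace expansion, which needs a case split on $k_1=0$ versus $k_1>0$ and bookkeeping of $\tau(k)$, i.e., whether each $i^k$ lies in $\mathcal{L}\cup\mathcal{E}$ or in $\mathcal{G}$. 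You eliminate in the opposite order. Projecting onto $z$ modulo $\mathbf{e}_{k_2}-\mathbf{e}_{k_1}$ forces the coefficients of the $w^k$ to vanish, because $\mathbf{e}_0,\dots,\mathbf{e}_n$ are affinely independent. The remaining $d+1$ points are the image of the $n=1$ configuration for $\Po_{k_1},\Po_{k_2}$ under the injective affine map $(x,\lambda)\mapsto(x,(1-\lambda)\mathbf{e}_{k_1}+\lambda\mathbf{e}_{k_2})$, so Theorem~\ref{thm:facet} finishes the argument. Your route is shorter, avoids the case split, and reuses the $n=1$ result. It also shows that the particular choice of $i^k$ is irrelevant: any tight point of each extra $\bar\Po_k$ completes the facet, which is a reusable principle. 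The paper's route is self-contained and gives an explicit nonzero determinant, hence linear independence. Here that is equivalent to affine independence, since $M^{k_1,k_2}_0(j)>0$ keeps the hyperplane away from the origin.
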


\begin{theorem}\label{thm:R_generaln} 
 (proof in Appendix \ref{app:lift})
 For $k_1,k_2 \in N_0$\,, with $k_1 < k_2$\,,  $j\in[d]$, and  $s\in \{\pm1 \}^d$, 
the facet of $\D$ described by 
\ref{Fkalpha} is a lifting inequality if and only if 
$|R^{k_1,k_2}| \leq 2$, where we define the \emph{ratio set} $R^{k_1,k_2}:= \left\{r^{k_1,k_2}_i: i \in [d]\right\}$.
\end{theorem}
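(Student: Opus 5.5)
We read the statement as the analogue of Theorem~\ref{thm:ratiosettheorem}: \emph{all} of the facets $F(k_1,k_2,j,s)$, $j\in[d]$, $s\in\{\pm1\}^d$, are optimal big-M lifting inequalities if and only if $|R^{k_1,k_2}|\le 2$. By the Observation following Theorem~\ref{thm:ratiosettheorem}, a single index $j$ with $r^{k_1,k_2}_j$ extremal always gives a lifting inequality, so only this reading can be intended. We also take ``lifting'' to mean a lifting from $\Po_{k_1}$ or $\Po_{k_2}$. The plan mirrors the proof of Theorem~\ref{thm:ratiosettheorem}; throughout write $r_i:=r^{k_1,k_2}_i$.

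\textbf{Step 1 (reduce to the $x$-coefficients).} For a valid inequality $\alpha^\top x\le M_k$ of $\Po_k$, its full optimal big-M lifting has $z$-coefficients built from the quantities $\max\{\alpha^\top x: x\in\Po_i\}$. With $\alpha=(s_i\alpha^{k_1,k_2}_i(j))_i$ these maxima are exactly $M^{k_1,k_2}_i(j)$. Hence the $z$-coefficients of \ref{Fkalpha} coincide with those of the full optimal big-M lifting of its $z=\mathbf{e}_k$ restriction, for every $k\in N_0$. So \ref{Fkalpha} is an optimal big-M lifting of a facet-describing inequality of $\Po_k$ exactly when its restriction to $z=\mathbf{e}_k$ describes a facet of $\Po_k$. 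Since $\Po_k$ is a generalized cross polytope, this happens exactly when $\alpha^{k_1,k_2}(j)$ is a positive multiple of $\bm a^k$. Equivalently, the tight extreme points $\pm\frac{1}{\bm a^k_i}\mathbf e_i$ (with the sign of $s_i$) must occur for \emph{all} $i\in[d]$.

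\textbf{Step 2 (tight sets for $k_1,k_2$).} As in the $n=1$ computation, $\alpha^{k_1,k_2}_i(j)=\min\{\bm a^{k_1}_i,\,r_j\bm a^{k_2}_i\}$. Hence $\alpha_i/\bm a^{k_1}_i=1$ for $r_i\le r_j$ and $<1$ otherwise, so $M^{k_1,k_2}_{k_1}(j)=1$ with tight set $S^0(r_j)=\{i:r_i\le r_j\}$. Likewise $\alpha_i/\bm a^{k_2}_i=\min\{r_i,r_j\}$, so $M^{k_1,k_2}_{k_2}(j)=r_j$ with tight set $S^1(r_j)=\{i:r_i\ge r_j\}$. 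Thus the restriction to $z=\mathbf e_{k_1}$ (resp.\ $\mathbf e_{k_2}$) is facet describing iff $S^0(r_j)=[d]$ (resp.\ $S^1(r_j)=[d]$), i.e.\ iff $r_j=r_{\max}$ (resp.\ $r_j=r_{\min}$).

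\textbf{Step 3 (both directions).}
\begin{itemize}
\item[($\Leftarrow$)] If $|R^{k_1,k_2}|\le2$, every $j$ has $r_j\in\{r_{\min},r_{\max}\}$, so by Step 2 each $F(k_1,k_2,j,s)$ is a lifting from $\Po_{k_2}$ or from $\Po_{k_1}$. When $|R|=1$ it is both, which is consistent with Theorem~\ref{thm:hull_shape}.
\item[($\Rightarrow$)] If $|R^{k_1,k_2}|\ge3$, pick $\hat\jmath$ with $r_{\min}<r_{\hat\jmath}<r_{\max}$. Then $|S^0(r_{\hat\jmath})|,|S^1(r_{\hat\jmath})|\le d-1$. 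The face of $\Po_{k_1}$ (resp.\ $\Po_{k_2}$) cut out by the restriction is therefore the convex hull of at most $d-1$ extreme points, so it has dimension at most $d-2$ and is not a facet. Hence $F(k_1,k_2,\hat\jmath,s)$ is not a lifting from $\Po_{k_1}$ or $\Po_{k_2}$.
\end{itemize}

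\textbf{Main obstacle.} The only delicate point is the third polytopes $\Po_k$, $k\notin\{k_1,k_2\}$. The facet could coincidentally be a lifting from such a $\Po_k$ if $\bm a^k$ happens to be a positive multiple of $\alpha^{k_1,k_2}(\hat\jmath)$. The step-1 criterion makes this checkable, but it cannot be excluded in general. I would therefore either state the result explicitly for liftings from $\Po_{k_1},\Po_{k_2}$, or add the genericity assumption that no $\bm a^k$ is proportional to such an $\alpha^{k_1,k_2}(\hat\jmath)$. Under either choice, the argument of Steps 1--3 is complete.
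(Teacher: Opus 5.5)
Your proposal is correct and follows the paper's proof in Appendix~\ref{app:lift}: you use the same tight index sets $\{i: r_i\le r_j\}$ and $\{i: r_i\ge r_j\}$, a middle ratio $r_{\min}<r_{\hat\jmath}<r_{\max}$ for necessity, and extremal ratios for sufficiency, and your Step~1 (the $z$-coefficients of \ref{Fkalpha} are exactly the optimal big-M coefficients of its $z=\mathbf{e}_k$ restriction) makes explicit a point the paper leaves implicit. Your caveat about a third polytope is well founded, since for $n\ge 2$ taking $\bm a^k:=\alpha^{k_1,k_2}(\hat\jmath)$ for some $k\notin\{k_1,k_2\}$ makes $F(k_1,k_2,\hat\jmath,s)$ a lifting from $\Po_k$; restricting to liftings from $\Po_{k_1}$ or $\Po_{k_2}$ is exactly the assumption the paper itself makes in its $(\Rightarrow)$ direction.
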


Finally, we demonstrate that 
not every vertical facet is 
an \ref{Fkalpha} for $n=2$.

\begin{example}\label{ex:tree}
With $n=2$ and $d=4$, we 
consider
    \[\Po_0 := \{ x \in \mathbb{R}^4 ~: \pm x_1 \pm x_2 \pm 10 x_3 \pm 10 x_4 \leq 1\},\]
    \[\Po_1 :=\textstyle \{ x \in \mathbb{R}^4 ~: \pm 10 x_1 \pm \frac{1}{2} x_2 \pm \frac{1}{2} x_3 \pm 10 x_4 \leq 1\},\]
    \[\Po_2 :=\textstyle \{ x \in \mathbb{R}^4 ~: \pm 10 x_1 \pm 10 x_2 \pm \frac{1}{3} x_3 \pm \frac{1}{3} x_4 \leq 1\}.\] 
Consider the inequality 
\begin{equation}\label{star}\tag{$*$}
x_1+x_2+x_3+x_4-z_1-2z_2 \leq 1.
\end{equation}

In what follows, all points are in $\mathbb{R}^4\times \mathbb{R}^2$.

\noindent The extreme points of $\bar{\Po_0}$ are 
$\left( \genfrac{}{}{0pt}{1}{\pm \mathbf{e}_1}{\mathbf{e}_0}\right),
\left( \genfrac{}{} {0pt}{1}{\pm \mathbf{e}_2}{\mathbf{e}_0}\right), \left( \genfrac{}{}{0pt}{1}{\pm \frac{1}{10}\mathbf{e}_3}{\mathbf{e}_0}\right),
\left( \genfrac{}{}{0pt}{1}{\pm \frac{1}{10}\mathbf{e}_4}{\mathbf{e}_0}\right)$. 

\noindent The extreme points of $\bar{\Po_1}$ are $\left( \genfrac{}{}{0pt}{1}{\pm \frac{1}{10} \mathbf{e}_1}{\mathbf{e}_1}\right),\left( \genfrac{}{}{0pt}{1}{\pm 2 \mathbf{e}_2}{\mathbf{e}_1}\right), \left( \genfrac{}{}{0pt}{1}{\pm 2\mathbf{e}_3}{\mathbf{e}_1}\right), \left( \genfrac{}{}{0pt}{1}{\pm \frac{1}{10}\mathbf{e}_4}{\mathbf{e}_1}\right)$. 

\noindent The extreme points of $\bar{\Po_2}$ are $\left( \genfrac{}{}{0pt}{1}{\pm \frac{1}{10} \mathbf{e}_1}{\mathbf{e}_2}\right),\left( \genfrac{}{}{0pt}{1}{\pm \frac{1}{10} \mathbf{e}_2}{\mathbf{e}_2}\right), \left( \genfrac{}{}{0pt}{1}{\pm 3\mathbf{e}_3}{\mathbf{e}_2}\right), \left( \genfrac{}{}{0pt}{1}{\pm 3\mathbf{e}_4}{\mathbf{e}_2}\right)$. 

It is easy to check that \eqref{star} is valid for all of these extreme point, and hence also for $\D$. Consider the $n+d=6$ particular extreme points of $\D$: $\left( \genfrac{}{}{0pt}{1}{\mathbf{e}_1}{\mathbf{e}_0}\right),\left( \genfrac{}{}{0pt}{1}{ \mathbf{e}_2}{\mathbf{e}_0}\right), \left( \genfrac{}{}{0pt}{1}{ 2 \mathbf{e}_2}{\mathbf{e}_1}\right), \left( \genfrac{}{}{0pt}{1}{ 2\mathbf{e}_3}{\mathbf{e}_1}\right), \left( \genfrac{}{}{0pt}{1}{ 3\mathbf{e}_3}{\mathbf{e}_2}\right), \left( \genfrac{}{}{0pt}{1}{ 3\mathbf{e}_4}{\mathbf{e}_2}\right)$. It is straightforward to check that these $6$ points satisfy \eqref{star} as an equation and that they are affinely independent. However, the inequality is not in the form of \ref{Fkalpha} --- see Appendix \ref{app:ex} for details.
 \hfill $\clubsuit$
\end{example}

%%%%%%%%%%%%%%%%%%%

\section{Experiments}\label{sec:exper}

 We wanted to get some idea about how useful 
our inequalities are for situations that might be close to how they could arise in practice. So, we designed some experiments with
 $3\leq d \leq 12$ and $2\leq n \leq 20$, using \emph{translated} generalized cross polytopes. 
 Further, we
wanted to see how we could systematically improve them in 
situations where there might be some side constraints, as would be typical in applications.

We ran our experiments on our server ``zebratoo'' (running Windows Server 2022 Standard):
two Intel Xeon Gold 6444Y 3.60GHz processors, with 16 cores each, and 128 GB of memory. 
We carried out the experiments
using \texttt{CVXPY}, an open source Python-embedded modeling language for convex-optimization problems, which allows easy use of convex absolute-value constraints.
For the solver, we use \texttt{CLARABEL}, which is a conic optimization solver
distributed with  \texttt{CVXPY}.
 We found that \texttt{CLARABEL}
    is a stable and efficient solver that could handle our formulations. 

\subsection{Instance generation}
We aimed to generate 
challenging instances as follows:

\begin{enumerate}[leftmargin=*]
    \item For \(k\in N_0\)\,, we sample $\bm{a}^k\in\mathbb{R}_{++}^d$ so that for every pair \(k_1<k_2\), the ratios \(\{\bm{a}_i^{k_1}/\bm{a}_i^{k_2} ~:~ i\in[d]\}\) are all distinct. 
    By having many distinct ratios, we can expect that there will be many facets for $\D$ that are not described by liftings of facet-describing inequalities of the $\Po_i$\,.
    \item For \(k\in N_0\)\,, we uniformly sample the translation vector \(t^k\in\mathbb{R}^d\) on the unit sphere, aiming to spread out the $\Po_i$\,,
    as would be more typical in applications than having them all centered at the origin.

    Thus, for \(k\in N_0\)\,,
\[
\textstyle
\Po_k:=\left\{x\in\mathbb{R}^d ~:~ \sum_{i\in[d]} \bm{a}_i^k |x_i-t^k_{i}|\le 1\right\}.
\]

    \item We randomly sample \(c\in\mathbb{R}^d\) as an integer vector from $[-d,d]^d$, compute 
    \[
    v_k:=\max\{c^\top x ~:~ x\in \Po_k\} ,~ k \in N_0\,,
    \]
    and set
    \[
    g_k:=v_0-v_k\,, \forall \, k \in N_0\,,  
    \]
so that the objectives \(g_k+c^\top x\) are competitive with each other. 
\item Next, we added two side constraints to each $\Po_k$\, by trimming off a best vertex,
measured by $c^\top x$.
We also trim off its reflection (which is a worst vertex), 
as a convenience. Specifically,
for each $k\in N_0$\,, let
\[
i_k \in \argmax_{i\in[d]} \frac{|c_i|}{\bm{a}_i^k},
\]
Thus, \(i_k\) is the index for the best/worst vertex $\pm\hat{x}^{k,i}$ for each \(\Po_k\)\,. 
Finally, we define
\[
\Po_k^+
:=
\Po_k
\cap
\left\{
x :
\left|x_{i_k}-t^k_{i_k}\right|
\leq
\frac{1}{2 \bm{a}^k_{i_k}}
\right\}.
\]
That is, for each $k\in N_0$\,, we cut the best vertex and its reflection, midway between the vertex and the center $t^k$\,.
\end{enumerate}

\begin{figure}%[t!]
    \centering
\includegraphics[width=0.35\linewidth]{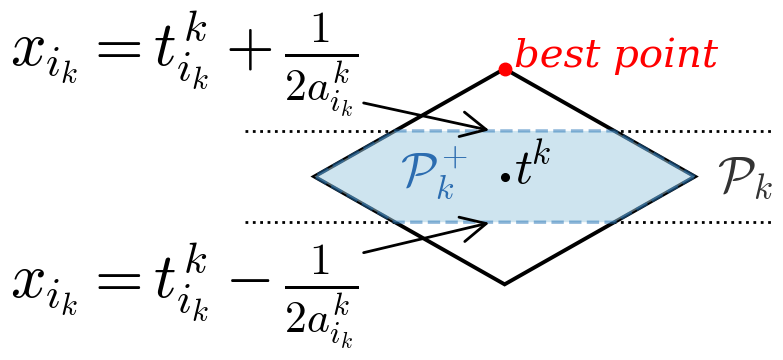}
    \caption{Trimmed polytope}
    \label{fig:trimmeddiamond}
\end{figure}

\subsection{Relaxations}

We consider six relaxations, $\mathcal{F}$, $\mathcal{L}$, $\mathcal{F}^+$, $\mathcal{L}^+$, $\mathcal{F}^{++}$, $\mathcal{L}^{++}$, which we now describe.

\begin{itemize}[leftmargin=*]
\item Relaxations $\mathcal{F}$ and $\mathcal{L}$ (which ignore that we cut off a pair of vertices from each $\Po_k$).
\begin{itemize}
    \item Relaxation $\mathcal{F}$:
Use inequalities \ref{Fkalpha_bar} for every $j$ and every pair \((k_1,k_2)\), and relax the indicator variables continuously:
\[
z_k \geq 0 \quad \forall k\in N, 
\qquad
\textstyle \sum_{k\in N} z_k \le 1.
\]

\item Relaxation $\mathcal{L}$: 
Keep only the inequalities \ref{Fkalpha_bar} corresponding to
\[
r_j^{k_1,k_2}=r_{\min}^{k_1,k_2}
\qquad\text{or}\qquad
r_j^{k_1,k_2}=r_{\max}^{k_1,k_2}
\]
(such inequalities correspond to full optimal big-M liftings). 
Also, \(z\) is relaxed continuously:
\[
z_k \geq 0 \quad \forall k\in N, 
\qquad
\textstyle \sum_{k\in N} z_k \le 1.
\]
\end{itemize}
Comparing $\mathcal{F}$ and $\mathcal{L}$,
we can learn something about the importance of our non-lifting facets. 

\item From \(\mathcal{F},\mathcal{L}\) to \(\mathcal{F}^+,\mathcal{L}^+\).

Improve the lifting coefficients based on $\Po_k^+$, $\forall k \in N$:
\[
\begin{aligned}
&M^{k_1,k_2}_k(j)
:=
\max_{x\in \Po_k^+}\sum_{i \in [d]}\alpha^{k_1,k_2}_i(j) |x_i-t^k_i|, 
\\ \qquad
&\mu^{k_1,k_2}_k(j)
:=
M^{k_1,k_2}_0(j)
-
M^{k_1,k_2}_k(j) , \forall k \in N.
\end{aligned}
\]
Then
\begin{equation}\tag{$\bar{F}^+(k_1,k_2,j)$}\label{Fkalpha+_bar}
\sum_{i \in [d]}\alpha^{k_1,k_2}_i(j)
\left|x_i-t^0_i-\sum_{k \in N} (t^k_i-t^0_i)z_k\right|
+\sum_{k \in N}\mu^{k_1,k_2}_k(j) z_k
\leq M^{k_1,k_2}_0(j).
\end{equation}

Observe that when we substitute in $z = \mathbf{e}_k$\,, \ref{Fkalpha} becomes a facet-describing inequality for $\Po_k$\,. So we start with \ref{Fkalpha} with $z = \mathbf{e}_k$ as the starting inequality. We check that the inequality \ref{Fkalpha+_bar} is valid for $\D$ by checking it is valid for extreme points of $\bar{\Po}_k$ for all $k \in N_0$\,. When $z = \mathbf{e}_0$\,, the left hand side of the inequality \ref{Fkalpha+_bar} becomes: $\sum_{i \in [d]}\alpha^{k_1,k_2}_i(j)
\left|x_i-t^0_i\right| \leq M^{k_1,k_2}_0(j)$ by definition. When $z = \mathbf{e}_l$ for $l \in N$\,, the inequality \ref{Fkalpha+_bar} becomes: $\sum_{i \in [d]}\alpha^{k_1,k_2}_i(j)
\left|x_i-t^l_i\right|
+ M^{k_1,k_2}_0(j)
-
M^{k_1,k_2}_l(j) \leq M^{k_1,k_2}_l(j) + M^{k_1,k_2}_0(j)
-
M^{k_1,k_2}_l(j) = M^{k_1,k_2}_0(j)$\,. Therefore, \ref{Fkalpha+_bar} is valid for $\D$.

\begin{itemize}
    \item \(\mathcal{F}^+\) has all inequalities from $\mathcal{F}$, but with improved lifting coefficients, as above.
    \item \(\mathcal{L}^+\) has all inequalities from $\mathcal{L}$, but with improved lifting coefficients, as above.
\end{itemize}
Comparing $\mathcal{F}^+$ and $\mathcal{L}^+$,
with $\mathcal{F}$ and $\mathcal{L}$,
we can learn something about whether, in the presence of some side constraints,
we can get significant improvements 
by solving auxiliary optimization problems
to improve upon our closed-form lifting coefficients. 

\item From \(\mathcal{F}^+,\mathcal{L}^+\)
to \(\mathcal{F}^{++},\mathcal{L}^{++}\)

Consider the vertex-cut constraints: $\left|x_{i_k}-t^k_{i_k}\right|
\leq
\frac{1}{2 \bm{a}^k_{i_k}}$, which were used in the definition of $\Po_k^+$\,.

 For \(\mathcal{F}^{++},\mathcal{L}^{++}\),
on top of \(\mathcal{F}^+\) and \(\mathcal{L}^+\), we add the lifted ``vertex-cut constraints'', defined as follows. Let
\[
\tau_k(x):=|x_{i_k}-t_{i_k}^k|,
\qquad
\beta_k:=\frac{1}{2 \bm{a}_{i_k}^k}.
\]
Then, we have the \emph{lifted vertex-cut constraints}:
\begin{align*}
k=0:\quad
&\tau_0(x)+\sum_{i\in N}
\left(\beta_0-\max_{x\in \Po_i^+}\tau_0(x)\right)z_i
\le \beta_0,
\\
k\in N:\quad
&\tau_k(x)
+\sum_{l\in N\setminus\{k\}}
\left(\max_{x\in \Po_0^+}\tau_k(x)-\max_{x\in \Po_l^+}\tau_k(x)\right)z_l
\\
&
+\left(\max_{x\in \Po_0^+}\tau_k(x)-\beta_k\right)z_k
\le
\max_{x\in \Po_0^+}\tau_k(x).
\end{align*}
Comparing $\mathcal{F}^{++}$ and $\mathcal{L}^{++}$,
with $\mathcal{F}^+$ and $\mathcal{L}^+$,
we can learn something about whether, in the presence of some side constraints on individual polytopes,
we can get significant improvements 
by solving auxiliary optimization problems
to lift the side-constraint coefficients, obtaining inequalities that are valid
for all of the $\Po_k$\,.
\end{itemize}

%%%%%%%%%%%%%%%%%%%%%%%

\subsection{Comparisons}
We compare relaxation methods against the {\it exact $\mathcal{O}$ptimal solution}:
\begin{equation}\tag{$\mathcal{O}$}
\mathfrak{z}(\mathcal{O})
:=
\max_{k\in N_0}
\left\{
g_k + c^\top x : x \in \Po_k^+
\right\},
\end{equation}

 For each \((d,n)\), with
 $3\leq d\leq 12$, $2\leq n \leq 20$,
 we compute the optimal objective value $\mathfrak{z}(\cdot)$ of each method
\[
X \in \{\mathcal{F}^{++},\,\mathcal{L}^{++},\,\mathcal{F}^{+},\,\mathcal{L}^{+},\,\mathcal{F},\,\mathcal{L}\},
\]
and compare it with the exact optimal value \(\mathfrak{z}(\mathcal{O})\).

We evaluate the value of method \(X\) compared to
our weakest relaxation $\mathcal{L}$ by the fraction of the gap closed:
\[
1-\frac{\mathfrak{z}(X)-\mathfrak{z}(\mathcal{O})}{\mathfrak{z}(\mathcal{L})-\mathfrak{z}(\mathcal{O})}.
\]
Therefore, if the value is near $1$, the gap left by     $\mathcal{L}$ is almost completely closed by $X$;
if the value is near $0$,  the gap left by 
    $\mathcal{L}$ is barely improved by $X$.
In Figure \ref{fig:heat}, for each method \(X\), we plot a
heat map resulting from our experiments.
For each heat map:
\begin{figure}%[t!]
\includegraphics[
    width=\textwidth,
    height=0.95\textheight,
    keepaspectratio
]{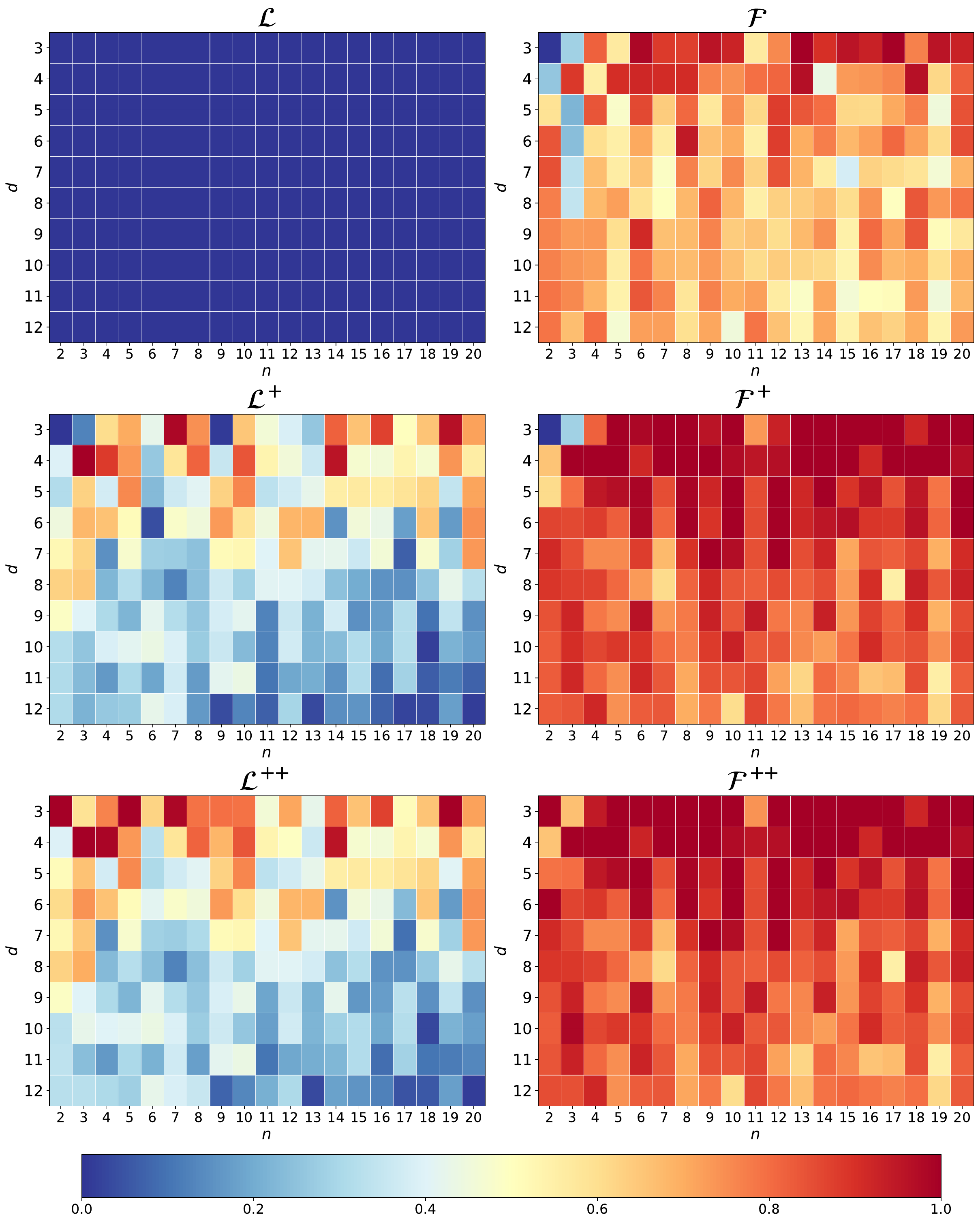}
\caption{Heat maps for gap closing}\label{fig:heat}
\end{figure}

\FloatBarrier

\begin{itemize}
    \item the rows are indexed by \(d\).
    \item the columns are indexed by \(n\).
    \item  color is determined by the value (i.e.,
    fraction of the gap closed), with more red indicating close to $1$, and more blue indicating close to $0$.
\end{itemize}

We make several observations:
\begin{itemize}
    \item Non-lifting inequalities close quite a lot of the gap left by lifting inequalities, whether or not we enhance with the + and ++ techniques.
    \item Already the + technique closes a lot of gap, with only a modest improvement with ++. 
    Still, there are instances, for example $d=2$, $n=3$, where the ++ technique is critical. 
    \item There is a general trend of increasing difficulty in closing gap, as $d$ and $n$ increase. But the deterioration is much less pronounced, if we start with all facets of $\D$ (i.e., if we do not exclude non-lifting vertical facets of $\D$). 
\end{itemize}

Of course, with more side constraints,
we could have more difficulty in closing gaps, but we believe that we have some good evidence that the techniques that we presented have some robustness for general use. 

%%%%%%%%%%%%%%%%%%%%

\section{Outlook}\label{sec:outlook}

With regard to the theory that we developed in \S\ref{sec:newresults_cross},
We are:
(i)
looking at generalizing the results 
generalized cross polytopes
 to ``incomplete generalized cross polytopes'' (which would cover the case
in which the input polytopes are arbitrary axis-aligned $d$-simplices, e.g., the example from \cite[Proposition 9]{QL_DAM2025}),
(ii) looking at  determining if the 
non-lifting vertical facets of $\D$ are MIR inequalities (see \cite{cornrio}), motivated by \cite[Section 5]{QL_DAM2025},
(iii)
investigating a generalization of Example \ref{ex:tree} to a broad family of facets that would include those described by \ref{Fkalpha},
and
(iv) applying and further developing our work in the context of 
solving mixed-integer linear optimization models that contain piecewise-linear functions of
a small number of variables (see \cite{LeeWilson}, for example).

%%%%%%%%%%%%%%%%

\appendix

\section{Proof of Theorem \ref{thm:facets_general_n}}\label{app:facet}
The proof follows the ideas of Theorem \ref{thm:facet}, 
but with heavier notation.
\begin{proof}
First, we demonstrate that \ref{Fkalpha} is valid for $\mathcal{D}$.
The extreme points of $\bar{\Po}_k$ are $\hat{x}^{k,i} := \left\{\left( \genfrac{}{}{0pt}{1}{t_i \frac{1}{\bm{a}_i^k} \mathbf{e}_i}{\mathbf{e_k}} \right) \right\}$, $i \in [d]$, where $t_i\in\{\pm1\}$, for $k \in N_0$.
When $z = \mathbf{e}_0$\,, we look at extreme points of $\bar{\Po}_0$ and plug into \ref{Fkalpha}:
\begin{align*}
\sum_{l=1}^d s_l \alpha^{k_1,k_2}_l(j) \hat{x}^{0,i}_l + \sum_{k=1}^n (M^{k_1,k_2}_0(j) - M^{k_1,k_2}_k(j) ) \cdot 0 = s_i t_i \frac{\alpha^{k_1,k_2}_i(j)}{\bm{a}^0_i} \leq \frac{\alpha^{k_1,k_2}_i(j)}{\bm{a}^0_i} \leq M^{k_1,k_2}_0(j).
\end{align*}
So, \ref{Fkalpha} holds for all extreme points of $\bar{\Po}_0$\,, and therefore 
holds for all of $\bar{\Po}_0$\,. 

Similarly, when $z = \mathbf{e}_k$ for $k \in N$, we look at extreme points of $\bar{\Po}_k$ and plug into \ref{Fkalpha}:
\begin{align*}
& \sum_{l=1}^d s_l \alpha^{k_1,k_2}_l(j) \hat{x}^{k,i}_l +  (M^{k_1,k_2}_0(j) - M^{k_1,k_2}_k(j) ) \cdot 1 \\
& = s_i t_i \frac{\alpha^{k_1,k_2}_i(j)}{\bm{a}^k_i} +  (M^{k_1,k_2}_0(j) - M^{k_1,k_2}_k(j) ) \\
& \leq \left( \frac{\alpha^{k_1,k_2}_i(j)}{\bm{a}^k_i} - M^{k_1,k_2}_k(j) \right) +  M^{k_1,k_2}_0(j)  \leq M^{k_1,k_2}_0(j).
\end{align*}
So \ref{Fkalpha} holds for all extreme points of $\bar{\Po}_k$\,, $k \in N$, and therefore holds for all of $\bar{\Po}_k$\,, $k \in N$. 

In summary, the inequalities \ref{Fkalpha} are valid for $\D$. We now find out $d+n$ affinely-independent points in $\D$ satisfying \ref{Fkalpha} as equations.

If $\hat{k} = 0$, then $z = \mathbf{e}_0$\,, so the inequality reduced to $\sum_{l=1}^d s_l \alpha^{k_1,k_2}_l(j) \hat{x}^{0,i}_l \leq M^{k_1,k_2}_0(j)$. If $\hat{k} \in N$, then $z = \mathbf{e}_{\hat{k}}$\,, so $\sum_{l=1}^d s_l \alpha^{k_1,k_2}_l(j) \hat{x}^{\hat{k},i}_l +  (M^{k_1,k_2}_0(j) - M^{k_1,k_2}_{\hat{k}}(j) ) \cdot 1  = \left( \sum_{l=1}^d s_l \alpha^{k_1,k_2}_l(j) \hat{x}^{\hat{k},i}_l - M^{k_1,k_2}_{\hat{k}}(j) \right) +  M^{k_1,k_2}_0(j) \leq M^{k_1,k_2}_0(j) \Rightarrow \sum_{l=1}^d s_l \alpha^{k_1,k_2}_l(j) \hat{x}^{\hat{k},i}_l \leq M^{k_1,k_2}_{\hat{k}}(j) $.

For $\hat{k}\in N_0$\,, let $\hat{x}^{\hat{k},i} = \left( \genfrac{}{}{0pt}{1}{s_i \frac{1}{\bm{a}_i^{\hat{k}}} \mathbf{e}_i}{\mathbf{e}_{\hat{k}}} \right)$. Then $\hat{x}^{\hat{k},i}$ satisfy \ref{Fkalpha} as equations if and only if $\sum_{l=1}^d s_l \alpha^{k_1,k_2}_l(j) \hat{x}^{\hat{k},i}_l = \frac{\alpha^{k_1,k_2}_i(j)}{\bm{a}^{\hat{k}}_i} = M^{k_1,k_2}_{\hat{k}}(j) = \frac{\alpha^{k_1,k_2}_i(j)}{\bm{a}^{\hat{k}}_i} = \max_{ i \in [d]}\frac{\alpha^{k_1,k_2}_i(j)}{\bm{a}^k_i}$, that is, if and only if $i$ attains the maximum in the definition of $M^{k_1,k_2}_{\hat{k}}(j)$. 

Consider the extreme points $\hat{x}^{k_1,i} := \left( \genfrac{}{}{0pt}{1}{s_i \frac{1}{\bm{a}_i^{k_1}} \mathbf{e}_i}{\mathbf{e}_{k_1}} \right) $ for $\bar{\Po}_{k_1}$\,. For $i \in \mathcal{L}^{k_1,k_2}(j) \cup \mathcal{E}^{k_1,k_2}(j)$ , we have $\frac{ \alpha_i^{k_1,k_2}(j)}{\bm{a}^{k_1}_i} = \frac{ \bm{a}_i^{k_1}}{\bm{a}^{k_1}_i} = 1$. For $i \in \mathcal{G}^{k_1,k_2}(j)$, we have $\frac{ \alpha_i^{k_1,k_2}(j)}{\bm{a}^{k_1}_i} = \frac{r^{k_1,k_2}_j a_i^{k_2}}{\bm{a}^{k_1}_i} = \frac{r^{k_1,k_2}_j}{r^{k_1,k_2}_i} < 1$. So $\max_{ i \in [d]}\frac{\alpha^{k_1,k_2}_i(j)}{\bm{a}^{k_1}_i} = 1$ and choose $i^{k_1} \in \argmax_{ i \in [d]}\frac{\alpha^{k_1,k_2}_i(j)}{\bm{a}^{k_1}_i} $ for every $i^{k_1} \in \mathcal{L}^{k_1,k_2}(j) \cup \mathcal{E}^{k_1,k_2}(j)$. Therefore, $\hat{x}^{k_1,i^{k_1}}$ satisfy \ref{Fkalpha} as equations, and we choose $|\mathcal{L}^{k_1,k_2}(j)| + |\mathcal{E}^{k_1,k_2}(j)|$ points here. 

Consider the extreme points $\hat{x}^{k_2,i}:= \left( \genfrac{}{}{0pt}{1}{s_i \frac{1}{\bm{a}_i^{k_2}} \mathbf{e}_i}{\mathbf{e}_{k_2}} \right) $ for $\bar{\Po}_{k_2}$\,. For $i \in \mathcal{L}^{k_1,k_2}(j)$, we have $\frac{ \alpha_i^{k_1,k_2}(j)}{\bm{a}^{k_2}_i} = \frac{ \bm{a}_i^{k_1}}{\bm{a}^{k_2}_i} = r^{k_1,k_2}_i < r^{k_1,k_2}_j$.  For $i \in \mathcal{E}^{k_1,k_2}(j) \cup \mathcal{G}^{k_1,k_2}(j)$, $\frac{ \alpha_i^{k_1,k_2}(j)}{\bm{a}^{k_2}_i}  = \frac{r^{k_1,k_2}_j \bm{a}_i^{k_2}}{\bm{a}^{k_2}_i} = r^{k_1,k_2}_j$. So $\max_{ i \in [d]}\frac{\alpha^{k_1,k_2}_i(j)}{\bm{a}^{k_2}_i} = r^{k_1,k_2}_j$. Choose 
$i^{k_2} \in \argmax_{ i \in [d]}\frac{\alpha^{k_1,k_2}_i(j)}{\bm{a}^{k_2}_i} $ for every $i^{k_2} \in \mathcal{E}^{k_1,k_2}(j) \cup \mathcal{G}^{k_1,k_2}(j)$. Therefore, $\hat{x}^{{k}_2,i^{k_2}}$ satisfy \ref{Fkalpha} as equations, and we can choose $|\mathcal{G}^{k_1,k_2}(j)|$ points here for $i^{k_2} \in \mathcal{G}^{k_1,k_2}(j)$. Let $\hat{\imath} \in \mathcal{E}^{k_1,k_2}(j)$. We pick $\left( \genfrac{}{}{0pt}{1}{s_{\hat{\imath}} \frac{1}{\bm{a}_{\hat{\imath}}^{k_2}} \mathbf{e}_{\hat{\imath}}}{\mathbf{e}_{k_2}} \right)$, and this point also satisfy \ref{Fkalpha} as an equation. Because $|\mathcal{L}^{k_1,k_2}(j)|+|\mathcal{G}^{k_1,k_2}(j)|+|\mathcal{E}^{k_1,k_2}(j)| = d$ and we choose one more point from $\mathcal{E}^{k_1,k_2}(j)$, we have $d+1$ points here. 

Then, for each $k \in N_0 \setminus \{k_1,k_2\}$, we just choose $i^k \in \argmax_{ i \in [d]}\frac{\alpha^{k_1,k_2}_i(j)}{\bm{a}^{k}_i}$. Let  $\hat{x}^{k,i^k}:= \left( \genfrac{}{}{0pt}{1}{s_{i^k} \frac{1}{\bm{a}_{i^k}^{k}} \mathbf{e}_{i^k}}{\mathbf{e}_{k}} \right)$ be extreme points of $\bar{\Po}_{k}$\,. By construction, $\frac{\alpha^{k_1,k_2}_{i^k}(j)}{\bm{a}^k_{i^k}} = M^{k_1,k_2}_k(j)$\,, so each point $\hat{x}^{k,i^k}$ also satisfies \ref{Fkalpha} as equations. Then, we have $n+1-2=n-1$ points here. So totally we have $d+n$ points satisfying \ref{Fkalpha} as equations.

It remains to show that those $d+n$ points are affinely independent. 
We arrange these points as columns of a matrix. After appropriately 
permuting rows and columns, we arrive at the matrix
 \[
S :=
\begin{array}{c@{\hspace{0.8em}}c}
&
\begin{array}{@{}ccccc@{}}
\B{|\mathcal L^{k_1,k_2}(j)|} &
\B{|\mathcal G^{k_1,k_2}(j)|} &
\B{|\mathcal E^{k_1,k_2}(j)|} &
\makebox[4.8em][c]{\makebox[0pt][c]{\hspace{-0.5em}\ensuremath{1}}} &
\B[9.0em]{n-1}
\end{array}
\\[5pt]
\begin{array}{c}
|\mathcal L^{k_1,k_2}(j)| \\[0.28em]
|\mathcal G^{k_1,k_2}(j)| \\[0.5em]
|\mathcal E^{k_1,k_2}(j)| \\[0.5em]
n
\end{array}
&
\left[
\begin{array}{@{}ccc|cc@{}}
\B{D_{\mathcal L}} &
\B{\mathbf{0}} &
\B{\mathbf{0}} &
 &  \\[0.25em]
\B{\mathbf{0}} &
\B{D_{\mathcal G}} &
\B{\mathbf{0}} &
\B[4.8em]{\frac{s_{\hat{\imath}}}{\bm{a}_{\hat{\imath}}^{k_2}} \mathbf e_{\hat{\imath}}} & [s_{i^k} \frac{1}{\bm{a}_{i^k}^{k}} \mathbf{e}_{i^k}]_{k \in N_0 \setminus \{k_1,k_2\}}\\[0.25em]
\B{\mathbf{0}} &
\B{\mathbf{0}} &
\B{D_{\mathcal E}} &
\B[4.8em] & \\[0.25em]
\hline
\B{\mathbf{e}_{k_1} \mathbf{e}^\top} &
\B{\mathbf{e}_{k_2} \mathbf{e}^\top} &
\B{\mathbf{e}_{k_1} \mathbf{e}^\top} &
\B[4.8em]{\mathbf{e}_{k_2}} & \B{[\mathbf{e}_{k}]_{k \in N_0 \setminus \{k_1,k_2\}}}
\end{array}
\right],
\end{array}
\]
where $D_{\mathcal{L}} := {\rm Diag}\left( \frac{s_{i}}{\bm{a}^{k_1}_{i}}: i \in \mathcal{L}^{k_1,k_2}(j) \right)$, $D_{\mathcal{G}} := {\rm Diag}\left( \frac{s_{i}}{\bm{a}^{k_2}_{i}}: i \in \mathcal{G}^{k_1,k_2}(j)\right)$, $D_{\mathcal{E}} := {\rm Diag}\left( \frac{s_{i}}{\bm{a}^{k_1}_{i}}: i \in \mathcal{E}^{k_1,k_2}(j)\right)$. 

The matrix $S$ has the form  
$     \left[
\begin{array}{cc}
         A &  B\\
         F & G
\end{array}
    \right]$.
Because $D_{\mathcal{L}}\,, D_{\mathcal{E}}\,, D_{\mathcal{G}}$ are diagonal with nonzero diagonal entries, $A$ is nonsingular. Now, we use first $d$ columns to eliminate the $x$ coordinates (first $d$ rows) of the last $n$ columns. After the column operations, the matrix $S$ transforms to  
 $   S' :=
     \left[
\begin{array}{cc}
         A &  \mathbf{0} \\
         F & H
\end{array}
    \right]$,
where $H = G - F A^{-1} B$.
Then, $\det(S) = \det(S') = \det(A) \det(H)$. It remains to show that $H$ is nonsingular. 
The first column of $H$ after elimination is $h_* = \mathbf{e}_{k_2} - \frac{a^{k_1}_{\hat{\imath}}}{a^{k_2}_{\hat{\imath}}} \mathbf{e}_{k_1}$\,, where $ \frac{a^{k_1}_{\hat{\imath}}}{a^{k_2}_{\hat{\imath}}} \neq 0$.
For each $k \in N_0 \setminus \{k_1,k_2\}$,  the corresponding column of $H$ is $h_k = \mathbf{e}_k - \lambda_k \mathbf{e}_{\tau({k})}$\,, where
$\lambda_k := \frac{a^{\tau({k})}_{i^k}}{a^{k}_{i^k}} \neq 0$, and 
\[
\tau({k}):= 
\begin{cases}
k_1 \,, & \mbox{for } i^k \in \mathcal{L}^{k_1,k_2}(j) \cup \mathcal{E}^{k_1,k_2}(j); \\ 
k_2\,, & \mbox{for } i^k \in \mathcal{G}^{k_1,k_2}(j).
\end{cases}
\]

We have two cases:

\noindent \underline{Case $0<k_1<k_2$}\,:

Because $k \in N_0 \setminus \{k_1, k_2\}$, $k=0$ appears in the matrix $H$. In particular, $h_0 =  -\lambda_0 \mathbf{e}_{\tau({0})}$\,. For every $k \in N \setminus \{k_1, k_2\}$, the row $k$ of $H$ has exactly one nonzero entry, which is the entry $1$ in the column $h_k$\,. Laplacian expansion along those rows with entry $1$ reduces the determinant to the $2 \times 2$ matrix in rows $k_1\,,k_2$ and columns $h_*\,, h_0$\,.

If $\tau({0}) = k_1$\,, the remaining matrix is
$\begin{bmatrix}
         - \frac{a^{k_1}_{\hat{\imath}}}{a^{k_2}_{\hat{\imath}}} &  -\lambda_0 \\
         1 & 0
\end{bmatrix}$, which has determinant $\lambda_0 \neq 0$.

\vspace{5pt}

If $\tau({0}) = k_2$\,, the remaining matrix is
$\begin{bmatrix}
         - \frac{a^{k_1}_{\hat{\imath}}}{a^{k_2}_{\hat{\imath}}} &  0 \\
         1 & -\lambda_0
\end{bmatrix}$, which has determinant $\frac{a^{k_1}_{\hat{\imath}}}{a^{k_2}_{\hat{\imath}}} \lambda_0 \neq 0$.

Therefore, $H$ is nonsingular in this case.

\noindent \underline{Case $0=k_1<k_2$}\,:

Then $\mathbf{e}_{k_1} = \mathbf{0}$, and $h_* = \mathbf{e}_{k_2}$\,. For every $k \in N_0 \setminus \{0, k_2\}$, we have $h_k = \mathbf{e}_k - \lambda_k \mathbf{e}_{\tau({k})}$\,. Then, if $\tau(k) = k_1 = 0$, then $h_k = \mathbf{e}_k$\,. If $\tau(k) = k_2$\,, we have $h_k = \mathbf{e}_k - \lambda_k \mathbf{e}_{k_{2}}$\,. In either case, the row $k$ has an entry $1$ in column $h_k$\,. Expanding along all rows $k \in N_0 \setminus \{0, k_2\}$, we are left with row $k_2$ and column $h_* = \mathbf{e}_{k_2}$\,, which entry is $1$. Therefore, $H$ is nonsingular in this case.

Thus, in both cases, $H$ is nonsingular. Then $A$ is nonsingular, and therefore $S$ is nonsingular. Hence those $d+n$ points are linearly independent, and therefore affinely independent.
\qed
\end{proof}

%%%%%%%%%%%%%%%%%%%%%%%%%%%%%%%%%%%%%%%%%%%%%%%%%%%%%%%%%%%%%%%%%%%

\section{Proof of Theorem \ref{thm:R_generaln} }\label{app:lift}
The proof follows the ideas of the proof of Theorem  \ref{thm:ratiosettheorem},
but with heavier notation.

\begin{proof}
Let $r^{k_1,k_2}_{\min} := \min\{r^{k_1,k_2}_i: i \in [d]\}$, $r^{k_1,k_2}_{\max} := \max\{r^{k_1,k_2}_i: i \in [d]\}$. Also, let $S^{k_1,k_2}_L(r_{{j}}):= \{ i \in [d]: r^{k_1,k_2}_i \leq r^{k_1,k_2}_{{j}}\}$, $S^{k_1,k_2}_G(r_{{j}}):= \{ i \in [d]: r^{k_1,k_2}_i \geq r^{k_1,k_2}_{{j}} \}$.

$(\Rightarrow):$ Suppose that every inequality \ref{Fkalpha}, for a fixed pair $k_1,k_2$, is a lifting inequality associated with $\Po_{k_1}$, or $\Po_{k_2}$
Prove by contradiction. Suppose $|R^{k_1,k_2}|>2$ for some $(k_1,k_2) \in N_0 \times N_0$. Let $r^{k_1,k_2}_{\hat{\jmath}} \in R^{k_1,k_2}$ such that $r^{k_1,k_2}_{\min} < r^{k_1,k_2}_{\hat{\jmath}} < r^{k_1,k_2}_{\max}$. Set $S^{k_1,k_2}_L(r_{\hat{\jmath}}):= \{ i \in [d]: r^{k_1,k_2}_i \leq r^{k_1,k_2}_{\hat{\jmath}}\}$, $S^{k_1,k_2}_G(r^{k_1,k_2}_{\hat{\jmath}}):= \{ i \in [d]: r^{k_1,k_2}_i \geq r^{k_1,k_2}_{\hat{\jmath}} \}$. Then, $S^{k_1,k_2}_L(r_{\hat{\jmath}}), S^{k_1,k_2}_G(r_{\hat{\jmath}})$ are proper subsets of $[d]$, and therefore every facet uses at least two points with indices from each set, and the facet is not a lifting inequality.

$(\Leftarrow):$ Suppose $|R^{k_1,k_2}| = 1$, say $r^{k_1,k_2}_i = r^{k_1,k_2}$ for all $i$. Then, $S^{k_1,k_2}_L(r^{k_1,k_2}_i) = S^{k_1,k_2}_G(r^{k_1,k_2}_i) = [d], \forall i \in [d]$. We can choose $d$ affinely-independent points from $\Po_{k_1}$ and one extreme point from $\Po_{k_2}$\,. Similarly, we can choose $d$ affinely-independent points from $\Po_{k_2}$ and one extreme point from $\Po_{k_1}$\,. It is necessary to choose also one point from each of the remaining $n-1$ polytopes to have $d+n$ affinely-independent points spanning a \emph{vertical} facet.

Suppose that $|R^{k_1,k_2}| = 2$, say $R^{k_1,k_2} = \{r^{k_1,k_2}_{\min}, r^{k_1,k_2}_{\max}\}$ with $r^{k_1,k_2}_{\min} < r^{k_1,k_2}_{\max}$. Then, we have two cases:
\begin{enumerate}
    \item $S^{k_1,k_2}_L(r^{k_1,k_2}_{\min}) = \{i: r^{k_1,k_2}_i \leq r^{k_1,k_2}_{\min} \} = \{i: r^{k_1,k_2}_i = r^{k_1,k_2}_{\min} \} \neq \varnothing$, and $S^{k_1,k_2}_G(r^{k_1,k_2}_{\min}) = \{i: r^{k_1,k_2}_i \geq r^{k_1,k_2}_{\min} \} = [d]$. Then, we choose $d$ affinely-independent points $\left( \genfrac{}{}{0pt}{1}{s_i \frac{1}{\bm{a}_i^{k_2}} \mathbf{e}_i}{\mathbf{e}_{k_2}}\right)$ for $i \in S^{k_1,k_2}_G(r^{k_1,k_2}_{\min})$, and one more point $\left( \genfrac{}{}{0pt}{1}{s_i \frac{1}{\bm{a}_i^{k_1}} \mathbf{e}_i}{\mathbf{e}_{k_1}}\right)$ for $i \in S^{k_1,k_2}_L(r^{k_1,k_2}_{\min})$. Then, we choose one point from each of the remaining $n-1$ polytopes.
    So the inequality 
    \ref{Fkalpha}
   is a full optimal big-M lifting inequality starting from $\Po_{k_2}$\,. 
    \item $S^{k_1,k_2}_L(r^{k_1,k_2}_{\max}) = \{i: r^{k_1,k_2}_i \leq r^{k_1,k_2}_{\max} \} = [d]$, and $S^{k_1,k_2}_G(r^{k_1,k_2}_{\max}) = \{i: r^{k_1,k_2}_i \geq r^{k_1,k_2}_{\max} \}  = \{i: r^{k_1,k_2}_i = r^{k_1,k_2}_{\max} \} = \varnothing$. Then, we choose $d$ affinely-independent points $\left( \genfrac{}{}{0pt}{1}{s_i \frac{1}{\bm{a}_i^{k_1}} \mathbf{e}_i}{\mathbf{e}_{k_1}} \right)$ for $i \in S^{k_1,k_2}_L(r^{k_1,k_2}_{\max})$, and one more point $\left( \genfrac{}{}{0pt}{1}{s_i \frac{1}{\bm{a}_i^{k_2}} \mathbf{e}_i}{\mathbf{e}_{k_2}}\right)$ for $i \in S^{k_1,k_2}_G(r^{k_1,k_2}_{\max})$. Then, we choose one point from each of the remaining $n-1$ polytopes.
    So the inequality \ref{Fkalpha} is a full optimal big-M lifting inequalities starting from $\Po_{k_1}$. \qed
\end{enumerate}  
\end{proof}

%%%%%%%%%%%%%%%%%%%%%%%%%%

\section{Details for Example \ref{ex:tree}}\label{app:ex}

\noindent \underline{Case $k_1 = 0$, $k_2=1$}:

\noindent \underline{$j=1$}: $r^{0,1}_1 = \frac{1}{10}$, $r^{0,1}_1 \bm{a}^1 = \frac{1}{10} \left(10, \frac{1}{2}, \frac{1}{2}, 10 \right) = \left(1, \frac{1}{20}, \frac{1}{20}, 1 \right)$, $\alpha^{0,1}(1) = \left(1, \frac{1}{20}, \frac{1}{20}, 1 \right)$, $M^{0,1}_0(1) = \max_{i \in [4]} \left\{ \frac{\alpha^{0,1}_i(1)}{\bm{a}^0_i} \right\} = \max \left\{ \frac{1}{1}, \frac{1/20}{1},\frac{1/20}{10},\frac{1}{10} \right\} = 1$, $M^{0,1}_1(1) = \max_{i \in [4]} \left\{ \frac{\alpha^{0,1}_i(1)}{\bm{a}^1_i} \right\} = \max \left\{ \frac{1}{10}, \frac{1/20}{1/2},\frac{1/20}{1/2},\frac{1}{10} \right\} = \frac{1}{10}$, $M^{0,1}_2(1) = \max_{i \in [4]} \left\{ \frac{\alpha^{0,1}_i(1)}{\bm{a}^2_i} \right\} = \max \left\{ \frac{1}{10}, \frac{1/20}{10},\frac{1/20}{1/3},\frac{1}{1/3} \right\} = 3$, which gives $x_1 + \frac{1}{20}x_2 + \frac{1}{20}x_3 + x_4 + \frac{9}{10}z_1 - 2 z_2 \leq 1$.

\smallskip

\noindent \underline{$j=2$}: $r^{0,1}_2 = 2$, $r^{0,1}_2 \bm{a}^1 = 2 \left(10, \frac{1}{2}, \frac{1}{2}, 10 \right) = (20, 1, 1, 20)$, $\alpha^{0,1}(2) = (1, 1, 1, 10)$, $M^{0,1}_0(2) = \max_{i \in [4]} \left\{\frac{\alpha^{0,1}_i(2)}{\bm{a}^0_i} \right\} = \max \left\{\frac{1}{1}, \frac{1}{1},\frac{1}{10},\frac{10}{10} \right\} = 1$, $M^{0,1}_1(2) = \max_{i \in [4]} \left\{\frac{\alpha^{0,1}_i(2)}{\bm{a}^1_i} \right\} = \max \left\{\frac{1}{10}, \frac{1}{1/2},\frac{1}{1/2},\frac{10}{10} \right\} = 2$, $M^{0,1}_2(2) = \max_{i \in [4]} \left\{\frac{\alpha^{0,1}_i(2)}{\bm{a}^2_i} \right\} = \max \left\{\frac{1}{10}, \frac{1}{10},\frac{1}{1/3},\frac{10}{1/3} \right\} = 30$, which gives $x_1 + x_2 + x_3 + 10 x_4 - z_1 - 29 z_2 \leq 1$.

\smallskip

\noindent \underline{$j=3$}: $r^{0,1}_3 = 20$, $r^{0,1}_3 \bm{a}^1 = 20 \left(10, \frac{1}{2}, \frac{1}{2}, 10 \right) = (200, 10, 10, 200)$, $\alpha^{0,1}(3) = (1, 1, 10, 10)$, $M^{0,1}_0(3) = \max_{i \in [4]} \left\{\frac{\alpha^{0,1}_i(3)}{\bm{a}^0_i} \right\} = \max \left\{\frac{1}{1}, \frac{1}{1},\frac{10}{10},\frac{10}{10} \right\} = 1$, $M^{0,1}_1(3) = \max_{i \in [4]} \left\{\frac{\alpha^{0,1}_i(3)}{\bm{a}^1_i} \right\} = \max \left\{\frac{1}{10}, \frac{1}{1/2},\frac{10}{1/2},\frac{10}{10} \right\} = 20$, $M^{0,1}_2(3) = \max_{i \in [4]} \left\{\frac{\alpha^{0,1}_i(3)}{\bm{a}^2_i} \right\} = \max \left\{\frac{1}{10}, \frac{1}{10},\frac{10}{1/3},\frac{10}{1/3} \right\} = 30$, which gives $x_1 + x_2 + 10 x_3 + 10 x_4 - 19 z_1 - 29 z_2 \leq 1$.

\smallskip

\noindent \underline{$j=4$}: $r^{0,1}_4 = 1$, $r^{0,1}_4 \bm{a}^1 = 1 \left(10, \frac{1}{2}, \frac{1}{2}, 10 \right) = \left(10, \frac{1}{2}, \frac{1}{2}, 10 \right)$, $\alpha^{0,1}(4) = \left(1, \frac{1}{2}, \frac{1}{2}, 10 \right)$, $M^{0,1}_0(4) = \max_{i \in [4]} \left\{\frac{\alpha^{0,1}_i(4)}{\bm{a}^0_i} \right\} = \max \left\{\frac{1}{1}, \frac{1/2}{1},\frac{1/2}{10},\frac{10}{10} \right\} = 1$, $M^{0,1}_1(4) = \max_{i \in [4]} \left\{\frac{\alpha^{0,1}_i(4)}{\bm{a}^1_i} \right\} = \max \left\{\frac{1}{10}, \frac{1/2}{1/2},\frac{1/2}{1/2},\frac{10}{10} \right\} = 1$, $M^{0,1}_2(4) = \max_{i \in [4]} \left\{\frac{\alpha^{0,1}_i(4)}{\bm{a}^2_i} \right\} = \max \left\{\frac{1}{10}, \frac{1/2}{10},\frac{1/2}{1/3},\frac{10}{1/3} \right\}= 30$, which gives $x_1 + \frac{1}{2}x_2 + \frac{1}{2}x_3 + 10x_4 - 29 z_2 \leq 1$.

\bigskip

\noindent \underline{Case $k_1 = 0$, $k_2=2$}:

\noindent \underline{$j=1$}: $r^{0,2}_1 = \frac{1}{10}$, $r^{0,2}_1 \bm{a}^2 = \frac{1}{10}\left(10, 10, \frac{1}{3}, \frac{1}{3} \right) = \left(1, 1, \frac{1}{30}, \frac{1}{30} \right)$, $\alpha^{0,2}(1) = \left(1, 1, \frac{1}{30}, \frac{1}{30} \right)$, $M^{0,2}_0(1) = \max_{i \in [4]} \left\{ \frac{\alpha^{0,2}_i(1)}{\bm{a}^0_i} \right\} = \max \left\{ \frac{1}{1}, \frac{1}{1},\frac{1/30}{10},\frac{1/30}{10}\right \} = 1$, $M^{0,2}_1(1) = \max_{i \in [4]} \left\{\frac{\alpha^{0,2}_i(1)}{\bm{a}^1_i} \right\}= \max \left\{ \frac{1}{10}, \frac{1}{1/2},\frac{1/30}{1/2},\frac{1/30}{10} \right\} = 2$, $M^{0,2}_2(1) = \max_{i \in [4]} \left\{ \frac{\alpha^{0,2}_i(1)}{\bm{a}^2_i} \right\} = \max \left\{ \frac{1}{10}, \frac{1}{10},\frac{1/30}{1/3},\frac{1/30}{1/3} \right\} = \frac{1}{10}$, which gives $x_1 + x_2 + \frac{1}{30}x_3 + \frac{1}{30} x_4 - z_1 + \frac{9}{10} z_2 \leq 1$.

\smallskip

\noindent \underline{$j=2$}: Because $r^{0,2}_2 = \frac{1}{10} = r^{0,2}_1$, we have the same inequality $x_1 + x_2 + \frac{1}{30}x_3 + \frac{1}{30} x_4 - z_1 + \frac{9}{10} z_2 \leq 1$.

\smallskip

\noindent \underline{$j=3$}: $r^{0,2}_3 = 30$, $r^{0,2}_3 \bm{a}^2 = 30 \left(10, 10, \frac{1}{3}, \frac{1}{3} \right) = (300, 300, 10, 10)$, $\alpha^{0,2}(3) = (1, 1, 10, 10)$, $M^{0,2}_0(3) = \max_{i \in [4]} \left\{\frac{\alpha^{0,2}_i(3)}{\bm{a}^0_i} \right\} = \max \left\{\frac{1}{1}, \frac{1}{1},\frac{10}{10},\frac{10}{10} \right\} = 1$, $M^{0,2}_1(3) = \max_{i \in [4]} \left\{\frac{\alpha^{0,2}_i(3)}{\bm{a}^1_i} \right\} = \max \left\{\frac{1}{10}, \frac{1}{1/2},\frac{10}{1/2},\frac{10}{10} \right\} = 20$, $M^{0,2}_2(3) = \max_{i \in [4]} \left\{\frac{\alpha^{0,2}_i(3)}{\bm{a}^2_i} \right\} = \max \left\{\frac{1}{10}, \frac{1}{10},\frac{10}{1/3},\frac{10}{1/3} \right\} = 30$, which gives $x_1 + x_2 + 10 x_3 + 10 x_4 - 19 z_1 - 29 z_2 \leq 1$.

\smallskip

\noindent \underline{$j=4$}: Because $r^{0,2}_4 = 30 = r^{0,2}_3$, we have the same inequality $x_1 + x_2 + 10 x_3 + 10 x_4 - 19 z_1 - 29 z_2 \leq 1$.

\bigskip
\noindent \underline{Case $k_1 = 1$, $k_2=2$}:

\noindent \underline{$j=1$}: $r^{1,2}_1 = 1$, $r^{1,2}_1 \bm{a}^2 = 1 \left(10, 10, \frac{1}{3}, \frac{1}{3} \right) = \left(10, 10, \frac{1}{3}, \frac{1}{3} \right)$, $\alpha^{1,2}(1) = \left(10, \frac{1}{2}, \frac{1}{3}, \frac{1}{3} \right)$, $M^{1,2}_0(1) = \max_{i \in [4]} \left\{ \frac{\alpha^{1,2}_i(1)}{\bm{a}^0_i} \right\} = \max \left\{ \frac{10}{1}, \frac{1/2}{1},\frac{1/3}{10},\frac{1/3}{10}\right\} = 10$, $M^{1,2}_1(1) = \max_{i \in [4]} \left\{ \frac{\alpha^{1,2}_i(1)}{\bm{a}^1_i}\right\} = \max \left\{ \frac{10}{10}, \frac{1/2}{1/2},\frac{1/3}{1/2},\frac{1/3}{10}\right\} = 1$, $M^{1,2}_2(1) = \max_{i \in [4]} \left\{ \frac{\alpha^{1,2}_i(1)}{\bm{a}^2_i} \right\} = \max \left\{ \frac{10}{10}, \frac{1/2}{10},\frac{1/3}{1/3},\frac{1/3}{1/3}\right\} = 1$, which gives(after scaling by $\frac{1}{10}$) $x_1 + \frac{1}{20}x_2 + \frac{1}{30}x_3 + \frac{1}{30}x_4 + \frac{9}{10}z_1 + \frac{9}{10} z_2 \leq 1$.

\smallskip

\noindent \underline{$j=2$}: $r^{1,2}_2 = \frac{1}{20}$, $r^{1,2}_2 \bm{a}^2 = \frac{1}{20} \left(10, 10, \frac{1}{3}, \frac{1}{3} \right) = \left(\frac{1}{2}, \frac{1}{2}, \frac{1}{60}, \frac{1}{60} \right)$, $\alpha^{1,2}(2) = \left(\frac{1}{2}, \frac{1}{2}, \frac{1}{60}, \frac{1}{60} \right)$, $M^{1,2}_0(2) = \max_{i \in [4]} \left\{\frac{\alpha^{1,2}_i(2)}{\bm{a}^0_i} \right\} = \max \left\{\frac{1/2}{1}, \frac{1/2}{1},\frac{1/60}{10},\frac{1/60}{10} \right\} = \frac{1}{2}$, $M^{1,2}_1(2) = \max_{i \in [4]} \left\{\frac{\alpha^{1,2}_i(2)}{\bm{a}^1_i} \right\} = \max \left\{\frac{1/2}{10}, \frac{1/2}{1/2},\frac{1/60}{1/2},\frac{1/60}{10} \right\} = 1$, $M^{1,2}_2(2) = \max_{i \in [4]} \left\{\frac{\alpha^{1,2}_i(2)}{\bm{a}^2_i} \right\} = \max \left\{\frac{1/2}{10}, \frac{1/2}{10},\frac{1/60}{1/3},\frac{1/60}{1/3} \right\} = \frac{1}{20}$, which gives(after scaling by $2$) $x_1 + x_2 + \frac{1}{30}x_3 + \frac{1}{30} x_4 - z_1 + \frac{9}{10} z_2 \leq 1$.

\smallskip

\noindent \underline{$j=3$}: $r^{1,2}_3 = \frac{3}{2}$, $r^{1,2}_3 \bm{a}^2 = \frac{3}{2} \left(10, 10, \frac{1}{3}, \frac{1}{3}\right) = \left(15, 15, \frac{1}{2}, \frac{1}{2}\right)$, $\alpha^{1,2}(3) = \left(10, \frac{1}{2}, \frac{1}{2}, \frac{1}{2}\right)$, $M^{1,2}_0(3) = \max_{i \in [4]} \left\{\frac{\alpha^{1,2}_i(3)}{\bm{a}^0_i} \right\} = \max \left\{\frac{10}{1}, \frac{1/2}{1},\frac{1/2}{10},\frac{1/2}{10} \right\} = 10$, $M^{1,2}_1(3) = \max_{i \in [4]} \left\{\frac{\alpha^{1,2}_i(3)}{\bm{a}^1_i} \right\} = \max \left\{\frac{10}{10}, \frac{1/2}{1/2},\frac{1/2}{1/2},\frac{1/2}{10} \right\} = 1$, $M^{1,2}_2(3) = \max_{i \in [4]} \left\{\frac{\alpha^{1,2}_i(3)}{\bm{a}^2_i} \right\} = \max \left\{\frac{10}{10}, \frac{1/2}{10},\frac{1/2}{1/3},\frac{1/2}{1/3} \right\} = \frac{3}{2}$, which gives(after scaling by $\frac{1}{10}$) $x_1 + \frac{1}{20}x_2 + \frac{1}{20} x_3 + \frac{1}{20} x_4 + \frac{9}{10} z_1 + \frac{17}{20} z_2 \leq 1$.

\smallskip

\noindent \underline{$j=4$}: $r^{1,2}_4 = 30$, $r^{1,2}_4 \bm{a}^2 = 30 \left(10, 10, \frac{1}{3}, \frac{1}{3}\right) = (300,300,10,10)$, $\alpha^{1,2}(4) = \left(10, \frac{1}{2}, \frac{1}{2}, 10\right)$, $M^{1,2}_0(4) = \max_{i \in [4]} \left\{\frac{\alpha^{1,2}_i(4)}{\bm{a}^0_i} \right\} = \max \left\{\frac{10}{1}, \frac{1/2}{1},\frac{1/2}{10},\frac{10}{10} \right\} = 10$, $M^{1,2}_1(4) = \max_{i \in [4]} \left\{\frac{\alpha^{1,2}_i(4)}{\bm{a}^1_i} \right\} = \max \left\{\frac{10}{10}, \frac{1/2}{1/2},\frac{1/2}{1/2},\frac{10}{10} \right\} = 1$, $M^{1,2}_2(4) = \max_{i \in [4]} \left\{\frac{\alpha^{1,2}_i(4)}{\bm{a}^2_i} \right\} = \max \left\{\frac{10}{10}, \frac{1/2}{10},\frac{1/2}{1/3},\frac{10}{1/3} \right\}= 30$, which gives(after scaling by $\frac{1}{10}$) $x_1 + \frac{1}{20}x_2 + \frac{1}{20}x_3 + x_4 + \frac{9}{10} z_1 - 2 z_2 \leq 1$. \hfill$\clubsuit$

%%%%%%%%%%%%%%%%%%%%%%%%

\bigskip

\noindent {\bf Acknowledgments.} This work was supported in part by U.S. Office of Naval Research grant N00014-21-1-2135.

\bibliographystyle{alpha}
\bibliography{lowdimdisj}

\end{document}